\theoremstyle{plain}
\newtheorem{theorem}{Theorem}[section]
\newtheorem{corollary}[theorem]{Corollary}
\newtheorem{proposition}[theorem]{Proposition}
\newtheorem{lemma}[theorem]{Lemma}
\newtheorem{claim}[theorem]{Claim}
\newcommand{\vast}{\bBigg@{4}}
\newcommand{\Vast}{\bBigg@{5}}
\definecolor{bulgarianrose}{rgb}{0.28, 0.02, 0.03}
\definecolor{gray}{rgb}{0.5, 0.5, 0.5}
\theoremstyle{definition}
\newtheorem{remark}[theorem]{Remark}
\def\namedlabel#1#2{\begingroup
    #2%
    \def\@currentlabel{#2}%
    \phantomsection\label{#1}\endgroup
}
\pgfplotsset{compat = 1.16}%take out before submission
\newcommand{\cD}{\mathcal{D}}
\renewcommand{\Pr}{\mathbb{P}}
\newcommand{\cC}{\mathcal{C}}
\newcommand{\cA}{\mathcal{A}}
\newcommand{\cB}{\mathcal{B}}
\newcommand{\cE}{\mathcal{E}}
\newcommand{\cF}{\mathcal{F}}
\newcommand{\cG}{\mathcal{G}}
\newcommand{\cH}{\mathcal{H}}
\newcommand{\bP}{\mathbb{P}}
\newcommand{\bE}{\mathbb{E}}
\newcommand{\eps}{\varepsilon}
\title{\scshape
  Percolation on dense random graphs with given degrees}
\author[1]{Lyuben Lichev}
\author[1,3]{Dieter Mitsche\footnote{DM has been supported by Fondecyt grant 1220174 and by GrHyDy ANR-20-CE40-0002.}}
\author[2,4]{Guillem Perarnau\footnote{GP was supported by the Spanish Agencia Estatal de Investigaci\'on under projects PID2020-113082GB-I00 and the Severo Ochoa and Mar\'ia de Maeztu Program for Centers and Units of Excellence in R\&{}D (CEX2020-001084-M). }}
\affil[1]{Univ. Jean Monnet and Institut Camille Jordan, Saint-Etienne, France}
\affil[2]{IMTECH and Departament de Matemàtiques, Universitat Polit\`ecnica de Catalunya, Spain}
\affil[3]{IMC, Pontif\'{i}cia Univ. Cat\'{o}lica, Chile}
\affil[4]{Centre de Recerca Matem\`atica, Bellaterra, Spain}
\begin{document}

\maketitle

\begin{abstract}
In this paper, we study the order of the largest connected component of a random graph having two sources of randomness: first, the graph is chosen randomly from all graphs with a given degree sequence, and then bond percolation is applied. Far from being able to classify all such degree sequences, we exhibit several new threshold phenomena for the order of the largest component in terms of both sources of randomness. We also provide an example of a degree sequence for which the order of the largest component undergoes an unbounded number of jumps in terms of the percolation parameter, giving rise to a behavior that cannot be observed without percolation.
\end{abstract}

\section{Introduction}

Given $n\in \mathbb{N}$, a \emph{degree sequence $\cD_n = (d^{(n)}_1,\dots, d^{(n)}_n)$} is a sequence of non-negative integers such that $m(n) = \sum_{i=1}^n d^{(n)}_i$ is even. 
A degree sequence is \emph{feasible} if there exists a simple graph $G$ (that is, a graph containing no loops or multiple edges) with vertex set $[n]:=\{1,\dots,n\}$ such that $i$ has degree $d^{(n)}_i$ for all $i\in [n]$. Given a sequence of feasible degree sequences $(\cD_n)=(\cD_n)_{n\geq 1}$, we consider the sequence of random graphs $(\mathbf{G}(\cD_n))_{n\ge 1}$ where $\mathbf{G}(\cD_n)$ is chosen uniformly at random among the all simple graphs on $[n]$ where vertex $i$ has degree $d_i^{(n)}$ for all $i\in [n]$.

Random graphs with a given degree sequence is among the central topics of modern random graph theory. The influential work of Molloy and Reed~\cite{molloy1995critical} set a starting point for understanding the component structure of $\mathbf{G}(\cD_n)$: in particular, they introduced a criterion for the existence of the so-called \emph{giant component}, that is, a component of order linear in $n$, under some technical conditions on $\cD_n$. The range of applicability of this criterion has been extended in numerous subsequent works, also providing a more detailed description of the components~\cite{bollobas2015old, hatami2012scaling, janson2009new, joos2016how, joseph2010component, molloy1998size}. 

Given $p\in [0,1]$ and a graph $G$, we denote by $G_p$ the random subgraph of $G$ in which each edge of $G$ appears in $G_p$ independently (from the other edges) with probability $p$. We call $G_p$ the \emph{$p$-(bond)-percolation} of $G$. 
In this paper we will focus on the component structure of $\mathbf{G}(\cD_n)_p$. In this setting there are two levels of randomness: first, we choose a random graph $\mathbf{G}(\cD_n)$, and then we percolate it with probability $p$. We remark that the distribution of $\mathbf{G}(\cD_n)_p$ is \emph{not} necessarily uniform among all graphs with the same degree sequence.\footnote{For example, consider the set of graphs on four vertices and degree sequence $(3,3,2,2)$. There is a unique such graph satisfying that the two vertices of degree 2 do not form an edge. After percolation one obtains a 1--regular graph with some non-zero probability depending on $p$ but only two of the three possible perfect matchings appear with non-zero probability.}

The component structure of $\mathbf{G}(\cD_n)_p$ is well understood for regular degree sequences, that is, $d_i^{(n)}=d$ for all $i\in [n]$ and some integer $d\geq 3$. Goerdt~\cite{goerdt2001giant} proved the existence of a critical probability $p_c(d):=1/(d-1)$ such that the existence of a giant component undergoes a sharp threshold phenomenon at $p_c(d)$. While his result only applies for fixed $d$, the same conclusion holds for any $3\leq d=d(n)\leq n-1$; see also~\cite{joos2018critical} where the number of vertices in the largest component within the critical window was also shown to be typically of order $\Theta(n^{2/3})$. 
Bond percolation at criticality has been studied in more detail~\cite{joos2018critical,nachmias2010critical, pittel2008edge, riordan2012phase}.
Fountoulakis~\cite{fountoulakis2007percolation} and Janson~\cite{janson2008percolation} independently initiated the analysis of $\mathbf{G}(\cD_n)_p$, proving that 
$$
p_c=p_c(\cD_n) := \frac{\sum_{i=1}^n d_i^{(n)}}{\sum_{i=1}^n d_i^{(n)}(d_i^{(n)}-1)}
$$
is the threshold probability for the existence of a giant component under some conditions such as bounded maximum degree or bounded degree variance. 
These results were extended by Bollob\'as and Riordan~\cite{bollobas2015old} who proved the existence of a threshold provided that the degree distribution weakly converges (meaning that the proportion of vertices of degree $k$ rescaled by $n$ has a limit for every $k\ge 1$) and is uniformly integrable (that is, the average degree has a limit). These conditions in particular imply that the random graph is \emph{sparse}, that is, $m(n)= O(n)$. 
The approach of Bollob\'as and Riordan analyzes the \emph{configuration model} where a predetermined number of half-edges is attached to every vertex and these half-edges are then matched uniformly at random, possibly leading to loops or multiple edges.
An essential fact used in their proof is that the configuration model produces a simple graph with small though still sufficiently large probability when the average degree grows slowly to infinity. This can be widely used to analyze random graphs with given degrees but unfortunately not in our more general setting.
Fountoulakis, Joos and the third author~\cite{fountoulakis2022percolation} studied bond percolation when the convergence and integrability conditions are dropped but the degree sequence remains sparse. The random percolated graph at criticality has been recently studied by Dhara, van der Hofstad and van Leeuwarden in~\cite{dhara2021critical}.

Bollob\'as and Riordan noted the interest for the \emph{dense} case, characterized by the convergence of $m(n)/n$ to infinity (see Remark 24 in~\cite{bollobas2015old}). 
In the configuration model, their result can be applied to some almost regular dense degree sequences to show that $p_c$, which now tends to $0$ as $n\to\infty$, is a threshold for the existence of a giant component.
However, $p_c$ is not always the threshold in the dense case. 
For instance, there exist dense degree sequences for which the threshold probability is bounded away from zero as $n\to \infty$~(see Section 12 in~\cite{fountoulakis2022percolation}). Therefore, a more general approach is needed to fully understand the evolution of $\mathbf{G}(\cD_n)_p$ for dense sequences.

In this paper we provide a number of results on the sudden appearance of a large (but not necessarily linear order) component, shedding light on the evolution of percolated dense random graphs. The spirit behind most theorems below is guided by the following well-known intuition: if there exists a set of vertices having in expectation large degree after percolation, this set, together with components attached to it, will result in a large component. However, the source of randomness coming from percolation yields new phenomena. 
Firstly and contrarily to the sparse case, percolated dense random graphs can exhibit arbitrarily many linear jumps in the size of their largest component (see  Section~\ref{sec:exm1}); Theorems~\ref{thm 1} and~\ref{thm 2}, 
and Corollary~\ref{thm 3} establish different criteria to detect such jumps, while Theorem~\ref{thm 4} provides a tool to show that no such jump exists in a given probability interval. 
Secondly and most surprisingly, they might experience arbitrarily many jumps in the asymptotic order of their largest component (see Theorem~\ref{thm example}); 
Theorems~\ref{thm 6} and~\ref{thm 7} establish criteria to determine the size of sublinear largest components. Finally, the approach taken to study percolation on dense random graphs is completely different to the usual techniques for sparse random graphs and has a more combinatorial flavor.

\paragraph{Notation and terminology.} Given $f,g: \mathbb N\to \mathbb N$, we say that $f(n)\ll g(n)$ (or $f(n) = o(g(n))$, or $g(n)\gg f(n)$) if, for any $\eps>0$, there exists $n_\eps\in \mathbb N$ such that $f(n)\leq \eps g(n)$ for all $n\geq n_\eps$.

In this paper all graphs are simple (that is, without loops and multiple edges) unless explicitly stated otherwise. For a graph $G = (V,E)$, we call $|V|$ the \emph{order} of $G$ and we call $|E|$ the \emph{size} of $G$. For the sake of convenience, below we will sometimes talk about oriented edges $uv$, that is, we assume that the edge is oriented from $u$ to $v$ (although our graphs remain undirected). We also denote by $L_1(G)$ the order of a largest component in $G$. For a vertex $v\in V$, we denote by $d_G(v)$, or simply $d(v)$, the degree of $v$ in $G$.

Recall that $m = m(n) = \sum d^{(n)}_i$. As already pointed out, we are particularly interested in the case when the average degree $m/n$ tends to infinity as $n\to \infty$. 
We adopt the convention that random graphs with prescribed degrees are represented by bold symbols like $\mathbf{G}(\cD_n)$, or sometimes simply $\mathbf{G}$ or $\mathbf{H}$, to distinguish them from general graphs.
For a graph $G$ of order $n$ and $d\in\{0,\dots, n-1\}$, let $S_G(d)$ be the set of vertices of $G$ of degree at least $d$. From this point on, we abuse notation and write $\cD_n = (d_1, \dots, d_n)$ instead of $\cD_n = (d^{(n)}_1, \dots, d^{(n)}_n)$, and $S_n(d)$ instead of $S_{\mathbf{G}(\cD_n)}(d)$. 
Moreover, we assume throughout the paper that for all $i\in [n]$, $d_i\ge 1$ since isolated vertices in $\mathbf{G}(\cD_n)$ are irrelevant for our analysis. For a set $S\subseteq V(\mathbf{G}(\cD_n))$, we denote by $\Delta(S)$ the maximum degree (in $\mathbf{G}(\cD_n)$) of a vertex in $S$, and we write $d(S) = \sum_{v\in S} d(v)$. Moreover, for two disjoint sets $U, W\subseteq V(\mathbf{G}(\cD_n))$, we denote by $e(U,W)$ the number of edges between $U$ and $W$.

For a sequence of probability spaces $(\Omega_n, \mathcal F_n, \mathbb P_n)_{n\geq 1}$ and a sequence of events $(A_n)_{n\geq 1}$ with $A_n\in \mathcal F_n$ for every $n\geq 1$, we say that $(A_n)_{n\geq 1}$ happens \textit{asymptotically almost surely} (\textit{a.a.s.}) if $\lim_{n\to \infty}\mathbb P_n(A_n) = 1$.\par

To simplify the notation, we omit the use of $\lfloor \cdot \rfloor$ and $\lceil \cdot \rceil$ whenever it is clear from the context and it does not affect the proofs.

\subsection{Our results.}

For $\delta\in [0,1]$, a graph is called \emph{$\delta$-large} if it contains a component of order at least $\delta n$. 
Our first theorem is a criterion for the existence of a $\delta$-large component in $\mathbf{G}(\cD_n)$. 
While it does not provide a sharp threshold, it widely generalizes the existence of a threshold phenomenon for dense random regular graphs. 
Its statement corroborates the following intuition: if there are $\delta n$ vertices with expected degree $\gg 1$ after percolation, then a.a.s.\ almost all of them must participate in the same component of order at least $\delta n + o(n)$.

\begin{theorem}[Percolation threshold for $\delta$-large]\label{thm 1}
Let $\cD_n$ be a degree sequence such that there exist $\delta\ge 0$ and $d = d(n)$ satisfying $|S_n(d)|= \delta n +o(n)$. We have
\begin{enumerate}
    \item [(a)] \label{item1} if $p \ll 1/d$, then
    \begin{align*}
        \bP[L_1(\mathbf{G}(\cD_n)_p)\le \delta n+o(n)]=1-o(1).
    \end{align*}
    \item [(b)] \label{item2} if $p\gg 1/d$, then
    \begin{align*}
        \bP[L_1(\mathbf{G}(\cD_n)_p)\ge \delta n - o(n)]=1-o(1).
    \end{align*}
Moreover, a $(1-o(1))$--proportion of the vertices in $S_n(d)$ belong to the same connected component a.a.s.
\end{enumerate} 
\end{theorem}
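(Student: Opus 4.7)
A direct first-moment argument. Let $X$ count the vertices of $V\setminus S_n(d)$ that are non-isolated in $G(\cD_n)_p$; each such vertex has $G(\cD_n)$-degree at most $d-1$, so its probability of being non-isolated is at most $(d-1)p<dp=o(1)$. Hence $\bE[X]=o(n)$, and $X=o(n)$ a.a.s.\ by Markov's inequality. Any non-singleton component of $G(\cD_n)_p$ lies inside $S_n(d)\cup\{v\in V\setminus S_n(d): v\text{ is not isolated}\}$, giving $L_1(G(\cD_n)_p)\le |S_n(d)|+X=\delta n+o(n)$.

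\noindent\textbf{Part (b).} I would proceed in two steps. \emph{Step 1 (degree concentration).} Conditional on $G(\cD_n)$, for every $v\in S_n(d)$ the percolated degree $d_{G(\cD_n)_p}(v)$ is binomial with parameters $d(v)\ge d$ and $p$, with mean $\ge dp=\omega(1)$. Chernoff together with Markov then yields that a.a.s.\ the set $T:=\{v\in S_n(d):d_{G(\cD_n)_p}(v)\ge dp/2\}$ has size at least $(1-o(1))|S_n(d)|$. \emph{Step 2 (connectivity).} I would run a BFS in $G(\cD_n)_p$ from a vertex in $T$ and track the vertices of $S_n(d)$ reached. The driving heuristic is that following an arbitrary edge of $G(\cD_n)$ lands in $S_n(d)$ with probability $d(S_n(d))/m=\Omega(\delta)$, since $d(S_n(d))\ge d\delta n$ while $d(V\setminus S_n(d))\le(d-1)(1-\delta)n$; combined with the $\Omega(dp)$ percolated edges available at each vertex of $T$, the exploration restricted to $S_n(d)$ has effective branching factor $\Omega(\delta dp)=\omega(1)$. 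Supercritical branching with diverging mean drives a $(1-o(1))$-fraction of $S_n(d)$ into a single component of $G(\cD_n)_p$, which additionally absorbs any vertices of $V\setminus S_n(d)$ reachable along surviving edges, yielding $L_1\ge\delta n-o(n)$.

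\noindent\textbf{Main obstacle.} The hard part is turning the exploration heuristic in Step~2 into a rigorous argument for the \emph{uniform} model $G(\cD_n)$ in the dense regime. For sparse degree sequences one would couple with the configuration model, but here the pairing is simple with only vanishing probability, so such a transfer is unavailable. The plan is a switching argument showing that for any reasonable vertex set $A$ the number of $G(\cD_n)$-edges from $A$ into $S_n(d)$ is concentrated around $d(A)\,d(S_n(d))/m$ up to lower-order error, which controls the BFS step by step; a two-round sprinkling decomposition $p=1-(1-p_1)(1-p_2)$ with $p_1,p_2=\Theta(p)$ can then merge the resulting large components. A secondary subtlety is that the conclusion demands a $(1-o(1))$-fraction rather than merely a positive fraction of $S_n(d)$; this is recovered from the fact that the branching factor $\delta dp$ diverges, so Chernoff-type tail bounds on the exploration rule out any linear loss inside $S_n(d)$, but transferring these bounds faithfully from the configuration-model intuition to $G(\cD_n)$ still requires care.
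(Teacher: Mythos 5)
Part~(a) of your proposal is correct and in fact slightly more elementary than the paper's. You use a first-moment bound plus Markov: $\bE[X]\le \sum_{v\notin S_n(d)} d(v)p < dpn = o(n)$, so $X=o(n)$ a.a.s., and $L_1\le |S_n(d)|+X$. The paper instead proves the stronger Theorem~\ref{thm 6}~a) (an asymptotically tight bound $L_1\le (1+o(1))(|S_n(d)|+N_1(d,p))$) via Talagrand's inequality and derives Theorem~\ref{thm 1}~a) as a corollary; for part~a) alone your Markov argument suffices.

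Part~(b) has a genuine gap. You correctly identify the central obstacle (no configuration-model coupling in the dense regime) and your Step~1 (a.a.s.\ $(1-o(1))|S_n(d)|$ vertices have percolated degree $\ge dp/2$) is sound and close to the paper's $V_\omega$ construction. But Step~2 is a heuristic that you never make rigorous, and the plan you outline for doing so is unlikely to go through as stated. Controlling a sequential BFS in $G(\cD_n)$ via the edge-count concentration ``for any reasonable vertex set $A$'' requires the concentration to hold simultaneously for all sets $A$ that the exploration may visit; that is an exponentially large union bound, and the pointwise switching estimates you gesture at do not automatically deliver exponential-in-$n$ tails for sets $A$ of arbitrary composition. (The paper does obtain exponential tails for cuts, in Theorem~\ref{thm 5.1} via Lemma~\ref{sublem cut}, but only after first passing to a subgraph $H$ of large \emph{minimum} degree; a BFS front in $G(\cD_n)$ has no such structure.) Moreover, your treatment of the ``moreover'' clause --- that a $(1-o(1))$-fraction, not merely a positive fraction, of $S_n(d)$ lies in a single component --- is a hand-wave: diverging branching factor gives per-vertex non-extinction probability $1-o(1)$, but to get all those vertices into \emph{one} component you still need a merging step, and sprinkling only merges components you have already shown to be of comparable, large size.

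The paper's actual route for part~(b) avoids BFS entirely. After defining $V_\omega=\{v: d_p(v)\ge\omega\}$ with $\omega^3\asymp dp$, it shows (Lemma~\ref{sublem 2.5}) that $V_\omega$ captures almost all of $S_n(d)$, then uses switchings of blue edges (Lemmas~\ref{sublem 2.6},~\ref{sublem 2.7}) to show that only an $O(\omega^{-2})$-fraction of $V_\omega$ is isolated in $G_p[V_\omega]$, and finally a component-count switching (Lemma~\ref{sublem 2.8}, powered by the $2$-cut bound of Lemma~\ref{Felix and Guillem}) to show $G_p[V_\omega]$ has a component of order $(1-\omega^{-1/2})|V_\omega|$. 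Concentration of $|V_\omega|$ (Talagrand) then gives the size bound, and the ``moreover'' clause falls out directly. This is a genuinely different --- and more global --- argument than what you sketch, and filling in your plan would require new ideas beyond those you have flagged.
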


\begin{remark}
Theorem~\ref{thm 1} can be used to describe the behavior of $\mathbf{G}(\cD_n)_p$ for $p=\Theta(d^{-1})$ under alternative conditions. For instance, if there exists $d'\gg d$ with $|S_n(d')|\geq \delta'n -o(n)$, then by applying (b) to $d'$, we obtain that for any $p = \Theta(d^{-1})$,
    \begin{align*}
        \bP[L_1(\mathbf{G}(\cD_n)_p)\ge \delta'n-o(n)]=1-o(1).
    \end{align*}
Similarly, if there exists $d''\ll d$ with $|S_n(d'')|\leq \delta''n -o(n)$, by applying (a) to $d''$, we obtain that for any $p = \Theta(d^{-1})$,
    \begin{align*}
        \bP[L_1(\mathbf{G}(\cD_n)_p)\le \delta''n-o(n)]=1-o(1).
    \end{align*}
\end{remark}

\begin{remark}
In a general setting, Theorem~\ref{thm 1} is as precise as it can get (see Section~\ref{sec:exm1}). 
In particular, if there exists a function $\omega$ such that $\lim_{n\to \infty} \omega(n)\to \infty$ and $d = d(n)$ satisfying $|S_n(d)| = \delta n + o(n)$ and $|S_n(\omega^{-1} d)\setminus S_n(\omega d)| = o(n)$, then a.a.s.\ $L_1(\mathbf{G}(\cD_n)_p) = \delta n - o(n)$. 
The linearity of $|S_n(d)|$ is also necessary for the conclusion in Part~(b). 
Intuitively, if $|S_n(d)| = o(n)$, then vertices of high degree could typically connect to lower degree vertices and thus would fail to form a dense component between themselves.
To be more precise, consider the following example.
Define a function $\omega = \omega(n)$ slowly going to infinity,~and consider a degree sequence with $|S_n(d)| = n/\omega^5$ vertices of degree $d\gg 1$ and $n-n/\omega^5$ vertices of degree $d' = d/\omega^3$.
Also, set $p = \omega/d$. Then, an easy application of the switching method allows us to conclude that a typical vertex of degree $d$ has $(1+o(1)) d\cdot (d|S_n(d)|/d'n)\approx d/\omega^2\ll 1/p$ neighbors in $S_n(d)$ before percolation. Thus, after percolation, a vertex in $S_n(d)$ typically has no neighbors in $S_n(d)$ and $(1+o(1)) pd = \omega+o(\omega)$ neighbors of degree $d'$.
At the same time, a set of $\omega+o(\omega)$ vertices of degree $d'$ has $(\omega+o(\omega)) pd' = o(1)$ expected neighbors after percolation.
This shows that $\mathbf{G}(\cD_n)_p$ consists of small connected components mostly given by stars of order $\omega + o(\omega)$ with centers in $S_n(d)$ and isolated vertices, and a comparison with a two-type branching process similar to the one from the proof of Theorem~\ref{thm example} is sufficient to show that a.a.s.\ all components have order $o(|S_n(d)|)$.
\end{remark}

\begin{remark}\label{rem:1.4}
It may be deduced from the proof of Theorem~\ref{thm 1} that the error bounds in both (a) and (b) are $\exp(-\omega(n)n)$ where $\omega$ is a function grows to infinity at speed $o(1/(pd))$ in (a) and $o(pd)$ in (b).
Moreover, the following version of Theorem~\ref{thm 1} may be shown with very minor modifications of our proof strategy: for every $\eps \in (0,1)$, there are constants $c_1 = c_1(\delta, \eps) > 0$ and $c_2 = c_2(\delta, \eps) > 0$ such that, given $p\ge \min(1, c_1/d)$, with probability at least $1-\exp(-c_2 n)$,
there is a connected component in $\mathbf{G}(\cD_n)_p$ containing an $(1-\eps)$-proportion of the vertices in $S_n(d)$.
\end{remark}

The next three theorems give a finer description of the threshold under stronger conditions focusing on the existence of \emph{jumps} in the order of the largest component (i.e., when small increments in the percolation probability give rise to an increase of the order of the largest component by a factor $c > 1$).

The first of these three results provides a sufficient condition for non-existence of jumps. Recall that $p_1$-percolation and $p_2$-percolation with $p_1 < p_2$ may be naturally coupled via a percolation process on two independent stages with parameters $p_1$ and $(p_2-p_1)/(1-p_1)$.

\begin{theorem}[Sublinear size: no jump]\label{thm 4}
Let $\cD_n$ be a degree sequence such that there exist $d_1 = d_1(n)$, $d_2 =  d_2(n) \leq d_1$ and $\omega=\omega(n)\to \infty$ as $n\to \infty$ satisfying
\begin{equation*}
    |S_n(\omega^{-1}d_2)\setminus S_n(\omega d_1)|= o(n).
\end{equation*}
Let $p_1 = \dfrac{1}{d_1}$ and $p_2 = \dfrac{1}{d_2}$. Then, 
\begin{equation*}
    \mathbb P\left[L_1(\mathbf{G}(\cD_n)_{p_2}) - L_1(\mathbf{G}(\cD_n)_{p_1}) = o(n)\right] = 1-o(1).
\end{equation*}
\end{theorem}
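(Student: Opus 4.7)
The strategy is to apply Theorem~\ref{thm 1} at two carefully chosen degree thresholds. The hypothesis says that, up to $o(n)$ vertices, every vertex has degree either above $\omega d_1$ or below $\omega^{-1}d_2$. Observe that $p_1 = 1/d_1$ satisfies $p_1 \cdot \omega d_1 = \omega \to \infty$, so $p_1 \gg 1/(\omega d_1)$, while $p_2 = 1/d_2$ satisfies $p_2 \cdot \omega^{-1}d_2 = 1/\omega \to 0$, so $p_2 \ll 1/(\omega^{-1}d_2)$. These are exactly the regimes of parts b) and a) of Theorem~\ref{thm 1}, applied respectively at $d = \omega d_1$ and $d = \omega^{-1}d_2$.

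Set $\alpha_n := |S_n(\omega d_1)|/n$ and $\beta_n := |S_n(\omega^{-1}d_2)|/n$. Since $\omega^{-1}d_2 \le \omega d_1$ eventually, $S_n(\omega d_1) \subseteq S_n(\omega^{-1}d_2)$, hence $\alpha_n \le \beta_n$, and the hypothesis yields $\beta_n - \alpha_n = o(1)$. Couple the two percolations in the standard monotone way so that $G(\cD_n)_{p_1} \subseteq G(\cD_n)_{p_2}$; in particular $L_1(G(\cD_n)_{p_1}) \le L_1(G(\cD_n)_{p_2})$ deterministically, and it suffices to sandwich both quantities between the same two values up to an error of $o(n)$.

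Because Theorem~\ref{thm 1} is phrased with a fixed constant $\delta$, while $\alpha_n$ need not converge, I would conclude via a standard subsequence argument. Along any subsequence, extract a further subsequence $(n_{k_j})$ along which $\alpha_{n_{k_j}} \to \alpha \in [0,1]$; then both $|S_{n_{k_j}}(\omega d_1)|$ and $|S_{n_{k_j}}(\omega^{-1}d_2)|$ are equal to $\alpha n_{k_j} + o(n_{k_j})$. Part b) of Theorem~\ref{thm 1}, applied at $d = \omega d_1$, gives $L_1(G(\cD_{n_{k_j}})_{p_1}) \ge \alpha n_{k_j} - o(n_{k_j})$ a.a.s., while part a), applied at $d = \omega^{-1}d_2$, gives $L_1(G(\cD_{n_{k_j}})_{p_2}) \le \alpha n_{k_j} + o(n_{k_j})$ a.a.s. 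A union bound together with the monotone coupling then yields $L_1(G(\cD_{n_{k_j}})_{p_2}) - L_1(G(\cD_{n_{k_j}})_{p_1}) = o(n_{k_j})$ a.a.s.; since this holds along every subsequence, it holds along the full sequence.

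No serious obstacle appears: this is essentially a two-scale application of Theorem~\ref{thm 1}. The only technical point is the possible non-convergence of $\alpha_n$, handled by the subsequence device above; a mild quantitative strengthening of Theorem~\ref{thm 1} allowing $\delta = \delta(n)$ would remove even this step and give a direct one-line deduction.
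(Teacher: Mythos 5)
Your proposal is correct and follows essentially the same route as the paper: apply Theorem~\ref{thm 1}~b) at threshold $\omega d_1$ (noting $p_1\gg 1/(\omega d_1)$) and Theorem~\ref{thm 1}~a) at threshold $\omega^{-1}d_2$ (noting $p_2\ll 1/(\omega^{-1}d_2)$), use that $S_n(\omega d_1)\subseteq S_n(\omega^{-1}d_2)$ with $o(n)$ difference, and handle the possible non-convergence of $|S_n(\omega d_1)|/n$ by a subsequence extraction. The paper phrases the subsequence step as a proof by contradiction rather than the ``every subsequence has a convergent further subsequence'' device, but the content is the same.
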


\begin{remark}
By taking $d_1'=\omega d_1$ and $d_2'=\omega^{-1}d_2$, we can rephrase Theorem~\ref{thm 4} as follows: if $$|S_n(d_2')\setminus S_n(d_1')|=o(n),$$ 
then for any $p_1,p_2$ satisfying $1/d_1'\ll p_1\leq p_2\ll 1/d_2'$, we have
\begin{equation*}
    \mathbb P\left[L_1(\mathbf{G}(\cD_n)_{p_2}) - L_1(\mathbf{G}(\cD_n)_{p_1}) = o(n)\right] = 1-o(1).
\end{equation*}
This form of Theorem~\ref{thm 4} is more suitable for comparison with the following Theorem~\ref{thm 2} and Corollary~\ref{thm 3}.
\end{remark}

The next two theorems, contrary to Theorem~\ref{thm 4}, provide a sufficient condition for the existence of a linear jump in the order of the largest component in $\mathbf{G}(\cD_n)_p$.

\begin{theorem}[Linear size in wide interval: coarse threshold with linear jumps]\label{thm 2}
Let $\cD_n$ be a degree sequence such that there exist $\delta, \delta'>0$, $d_1 = d_1(n)$ and $d_2 = d_2(n) < d_1$ with $d_1/d_2\ge c$ for some constant $c > 1$, and such that
\begin{enumerate}[i)]
    \item $|S_n(d_2)\setminus S_n(d_1)| \geq \delta n$, and
    \item for some $d \gg d_1$, $|S_n(d)| \geq \delta' n$.
\end{enumerate}
Let $p_1 = \dfrac{1}{d_1}$ and $p_2 = \dfrac{1}{d_2}$. Then, there is a positive real number $C = C(\delta, \delta',c) > 0$ such that 
\begin{equation*}
    \mathbb P\left[L_1(\mathbf{G}(\cD_n)_{p_2}) - L_1(\mathbf{G}(\cD_n)_{p_1}) \ge Cn\right] = 1-o(1).
\end{equation*}
\end{theorem}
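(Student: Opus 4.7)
The plan is to use the sprinkling coupling and show that a linear number of middle-layer vertices are absorbed into the giant component during the sprinkling stage. Couple $G_1 := G(\cD_n)_{p_1}$ and $G_2 := G(\cD_n)_{p_2}$ so that $G_1 \subseteq G_2$, by declaring each edge of $G(\cD_n) \setminus G_1$ to belong to $G_2$ independently with probability $q := (p_2-p_1)/(1-p_1)$; the hypothesis $d_1 \geq c d_2$ yields $q \geq (1 - 1/c)/d_2 \cdot (1+o(1))$. Applying Theorem~\ref{thm 1}~b) with threshold $d$ (valid because $p_1 = 1/d_1 \gg 1/d$), there is a.a.s.\ a component $\cC_1$ of $G_1$ containing a $(1-o(1))$-fraction of $S_n(d)$ and of order at least $\delta' n - o(n)$. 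Let $\cC_2$ denote the component of $G_2$ containing $\cC_1$; by monotonicity $L_1(G_2) - L_1(G_1) \geq |\cC_2| - |\cC_1|$, so it suffices to prove $|\cC_2 \setminus \cC_1| \geq C n$ a.a.s.

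I would next prove a structural lemma for $G(\cD_n)$. Since every vertex outside $S_n(d)$ has degree strictly less than $d$, one obtains $d(S_n(d))/m \geq \delta'/(1+\delta')$; by concentration of neighborhood counts in the configuration model (via switching or a direct second-moment computation), a.a.s.\ all but $o(|M|)$ vertices $v \in M := S_n(d_2) \setminus S_n(d_1)$ satisfy $|N_G(v) \cap S_n(d)| \geq \gamma d_2$ for a constant $\gamma = \gamma(\delta') > 0$. Moreover, any vertex of degree $\gg d_1$ has $p_1$-percolated degree $\gg 1$ and hence lies in $\cC_1$ with probability $1 - o(1)$, so combined with $|S_n(d) \setminus \cC_1| = o(n)$ coming from Theorem~\ref{thm 1}~b) the defect $T := S_n(d) \setminus \cC_1$ satisfies $d(T) = o(m)$ a.a.s.; a second concentration argument then yields that a.a.s.\ all but $o(|M|)$ vertices $v \in M$ satisfy $|N_G(v) \cap T| \leq \gamma d_2/2$, and hence $|N_G(v) \cap \cC_1| \geq \gamma d_2/2$.

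Define
\begin{equation*}
B := \{v \in M : v \text{ is isolated in } G_1 \text{ and } |N_G(v) \cap S_n(d)| \geq \gamma d_2\}.
\end{equation*}
Conditional on $G(\cD_n)$, isolation of $v$ in $G_1$ has probability $(1-p_1)^{d(v)} \geq (1-1/d_1)^{d_1} \geq 1/4$, and by the first part of the structural lemma $(1-o(1))|M|$ vertices of $M$ satisfy the neighborhood condition, whence $\bE[|B| \mid G(\cD_n)] \gtrsim |M|/5$ a.a.s. The pairwise covariances of the isolation indicators are at most $p_1 = 1/d_1$ and nonzero only for pairs $\{v,w\}$ joined by an edge in $G(\cD_n)$; the expected number of such edges inside $M$ is $O(n d_1^2/d) = o(n d_1)$ because $d \gg d_1$, so a Chebyshev bound gives $|B| \geq \delta n / 10$ a.a.s. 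Combining with the second part of the structural lemma, at least $\delta n / 20$ vertices of $B$ satisfy both $v \notin \cC_1$ (from isolation) and $|N_G(v) \cap \cC_1| \geq \gamma d_2/2$. For each such $v$, all its $\geq \gamma d_2/2$ edges to $\cC_1$ are outside $G_1$ and hence available for sprinkling, so the probability that at least one activates is at least
\begin{equation*}
1 - (1-q)^{\gamma d_2/2} \geq 1 - \exp\bigl(-\gamma(1-1/c)/2\bigr) =: \alpha > 0.
\end{equation*}
Distinct such $v$ use pairwise disjoint sets of sprinkled edges to $\cC_1$, so the corresponding absorption indicators are mutually independent given $(G(\cD_n), G_1)$; a Chernoff bound yields $|\cC_2 \setminus \cC_1| \geq (\alpha/2)(\delta n/20) \geq C n$ a.a.s.\ for $C := \alpha \delta / 40 > 0$, finishing the proof.

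The most delicate point is the bound $|N_G(v) \cap T| \leq \gamma d_2/2$, because $T$ is a random set determined by $G_1$: a priori, a few high-degree vertices of $S_n(d)$ missed by $\cC_1$ could contaminate the neighborhoods of most middle vertices and destroy the estimate. The resolution is the absorption-of-high-degree-vertices fact above, ensuring that $\cC_1$ a.a.s.\ contains every vertex of sufficiently large degree and so $d(T) = o(m)$. Managing the intertwined randomness of $G(\cD_n)$, of $G_1$, and of the sprinkling requires careful deferred decisions—first revealing $G(\cD_n)$, then the $G_1$-edges not incident to $M$ to fix $\cC_1$, then the edges incident to $M$ (which decide the set $B$), and finally the sprinkled edges that trigger the absorptions.
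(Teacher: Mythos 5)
Your high-level strategy (sprinkling plus Theorem~\ref{thm 1}~b) to produce $\cC_1$) matches the paper's, but the central structural lemma you propose is \emph{false} under the theorem's hypotheses, and two further steps that you take for granted are in fact the technical heart of the paper's argument. The claim ``a.a.s.\ all but $o(|M|)$ vertices $v\in M$ satisfy $|N_G(v)\cap S_n(d)|\geq \gamma d_2$ for some constant $\gamma=\gamma(\delta')>0$'' fails whenever $d_2$ is bounded and $\delta'$ is small: since $|N_G(v)\cap S_n(d)|$ is a non-negative integer, for $\gamma d_2<1$ the condition means ``$v$ has at least one heavy neighbor,'' which a typical $v\in M$ of degree $d_2$ satisfies with probability only about $1-(1-\delta'/(1+\delta'))^{d_2}$; for, say, $d_2=2$ and $\delta'=0.01$ this is about $2\%$, not $1-o(1)$. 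Nothing in the hypotheses of Theorem~\ref{thm 2} rules out bounded $d_2$. This is precisely why the paper does not try to show that $v$ has $\Omega(d_2)$ neighbors in the heavy set (or even in $\cC_1$): instead it partitions $N_G(v)$ into $N^1(v)\cup N^2(v)\cup N^3(v)$, handles the case where $v$'s neighbors are \emph{two-hop relays} to $\cC_1$ (the $S^2_{iso}$/$\cA_2$ route in Lemma~\ref{trivial ob 3}), and only needs to show that $S^3_{iso}$ is small, via the dedicated switching argument of Lemma~\ref{lem last}. You have no analogue of the relay case or the switching bound.

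Two further points. First, the assertion $d(T)=o(m)$ for $T=S_n(d)\setminus\cC_1$ does not follow from ``high-degree vertices are individually very likely in $\cC_1$'' together with $|T|=o(n)$: a few vertices of extremely high degree that happen to be missed could still carry a constant fraction of $m$. The paper proves the needed statement in Lemma~\ref{cor dom}, which requires the stochastic-domination Lemma~\ref{lem dom}, a careful degree-layering argument, and a second application of Proposition~\ref{prop:gen}; it is not a one-line observation. Second, and related, your appeals to ``concentration of neighborhood counts in the configuration model'' cannot be used directly here: the model is the \emph{uniform} random graph $G(\cD_n)$, not the configuration model, and for dense sequences the probability of simplicity can be exponentially small, so configuration-model heuristics must be replaced by switching arguments (as the paper does throughout). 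Even where the rough orders of magnitude in your Chebyshev step look right, each of these ``by concentration'' steps needs to be proved over $G(\cD_n)$, e.g.\ via switchings, and the dependence of $T$ on $G_1$ needs more than a deferred-exposure heuristic to tame.
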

A direct consequence of the previous theorem is the following:
\begin{corollary}[Linear size in narrow interval: sharp threshold with linear jumps]\label{thm 3} 
Let $\cD_n$ be a degree\\ 
sequence such that there exist $\delta, \delta'>0$, $ d_1 =  d_1(n)$ and  $ d_2 =  d_2(n)<  d_1$  with $ d_1/d_2\to 1$ such that
\begin{enumerate}[i)]
    \item $|S_n( d_2)\setminus S_n( d_1)|\geq \delta n$, and
    \item for some $d \gg  d_1$, $|S_n(d)| \geq \delta' n$.
\end{enumerate}
Fix $\eps \in (0,1)$ and let $p_1 = \dfrac{1-\eps}{ d_1}$ and $p_2 = \dfrac{1+\eps}{ d_2}\sim \dfrac{1+\eps}{d_1}$. Then, there is a positive real number $C = C(\delta, \delta', \eps) > 0$ such that 
\begin{equation*}
    \mathbb P\left[L_1(\mathbf{G}(\cD_n)_{p_2}) - L_1(\mathbf{G}(\cD_n)_{p_1}) \ge Cn\right] = 1-o(1).
\end{equation*}
\end{corollary}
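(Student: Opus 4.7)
The plan is to reduce the corollary to Theorem~\ref{thm 2} by rewriting the percolation probabilities in the form $1/\tilde d_i$ and absorbing the factors $(1\pm\eps)$ into rescaled degree thresholds. Concretely, set
\begin{equation*}
    \tilde d_1 = \frac{d_1}{1-\eps}, \qquad \tilde d_2 = \frac{d_2}{1+\eps},
\end{equation*}
so that $p_1 = 1/\tilde d_1$ and $p_2 = 1/\tilde d_2$, matching the form required by Theorem~\ref{thm 2}.

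Next, I would verify that $(\tilde d_1, \tilde d_2)$ satisfies the hypotheses of Theorem~\ref{thm 2}. For the ratio: since $d_1/d_2 \to 1$,
\begin{equation*}
    \frac{\tilde d_1}{\tilde d_2} = \frac{d_1}{d_2}\cdot\frac{1+\eps}{1-\eps} \longrightarrow \frac{1+\eps}{1-\eps} > 1,
\end{equation*}
so for $n$ large enough this ratio exceeds some constant $c = c(\eps) > 1$. For the set conditions, note that $\tilde d_1 \geq d_1$ and $\tilde d_2 \leq d_2$, which gives the inclusions $S_n(\tilde d_1) \subseteq S_n(d_1)$ and $S_n(\tilde d_2) \supseteq S_n(d_2)$; therefore
\begin{equation*}
    S_n(\tilde d_2)\setminus S_n(\tilde d_1) \;\supseteq\; S_n(d_2)\setminus S_n(d_1),
\end{equation*}
so hypothesis i) transfers with the same constant $\delta$. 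For hypothesis ii), since $\tilde d_1 = d_1/(1-\eps)$ is a constant multiple of $d_1$, any $d \gg d_1$ also satisfies $d \gg \tilde d_1$, preserving the lower bound $|S_n(d)| \geq \delta' n - o(n)$.

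Applying Theorem~\ref{thm 2} to the pair $(\tilde d_1, \tilde d_2)$ and the probabilities $p_1 = 1/\tilde d_1$, $p_2 = 1/\tilde d_2$ then yields a constant $C = C(\delta,\delta',c(\eps)) = C(\delta,\delta',\eps) > 0$ such that
\begin{equation*}
    \mathbb P\bigl[L_1(G(\cD_n)_{p_2}) - L_1(G(\cD_n)_{p_1}) \geq Cn\bigr] = 1-o(1),
\end{equation*}
which is exactly the conclusion of the corollary. There is no substantive obstacle here: the whole argument is a change of variables, and the only point that requires a moment of care is confirming that the rescaling preserves both the gap condition i) and the ``bulk above'' condition ii) of Theorem~\ref{thm 2}, which it does since the shifts $d_1\mapsto \tilde d_1$, $d_2\mapsto \tilde d_2$ are by constant factors only.
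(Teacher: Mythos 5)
Your proof is correct and is exactly the paper's argument: the paper dispatches the corollary in one sentence by applying Theorem~\ref{thm 2} with $(1+\eps)^{-1}d_2$ and $(1-\eps)^{-1}d_1$ in place of $d_2$ and $d_1$, which is precisely your change of variables. You simply spell out the routine verifications that the paper leaves implicit.
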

\noindent
In both Theorem~\ref{thm 2} and Corollary~\ref{thm 3}, the conditions $\delta, \delta'>0$ are necessary, as Remarks~\ref{rem:1} and~\ref{rem:2} in Section~\ref{sec:Remarks} show.

We now focus on sublinear size thresholds.

\begin{theorem}[Sublinear size thresholds]\label{thm 6}
Let $\cD_n$ be a degree sequence and let $d = d(n)$. 
\begin{enumerate}
    \item [(a)] If $|S_n(d)|\to \infty$, for any $p$,
    \begin{align*}
        \bP\bigg[L_1(\mathbf{G}(\cD_n)_p)\le (1+o(1))\bigg(|S_n(d)|+\sum_{v\notin S_n(d)} (1-(1-p)^{d(v)})\bigg)\bigg]=1-o(1).
    \end{align*}
    \item [(b)] if $d(S_n(d))\gg d(V\setminus S_n(d))$, for any $p\gg 1/d$, a.a.s.\ there is a connected component in $\mathbf{G}(\cD_n)_p$ containing a $(1-o(1))$-proportion of the edges of $\mathbf{G}(\cD_n)_p$.
\end{enumerate} 
\end{theorem}
Observe that Part~(b) of Theorem~\ref{thm 6} has a somehow different flavor from the other results since it provides a component containing many edges. 
Although in general we cannot say directly how many vertices this dense component has, in case a $(1-o(1))$-proportion of all vertices have degrees of the same order, it is equivalent to the existence of a component containing almost all vertices.

We complement the previous theorem with the following result that holds for any deterministic graph.
\begin{theorem}\label{thm 7}
Let $\cD_n$ be a degree sequence and let $d = d(n)$. Fix any $p\ll 1/d$ such that $p d(S_n(d)) \gg 1$ and let $\omega = \min\{(dp)^{-1}, p d(S_n(d))\}$. For any graph $G$ with vertex set $[n]$ and degree sequence $\cD_n$,
    \begin{align*}
        \bP[L_1(G_p)\le \max\{|S_n(d)|+(1+2\omega^{-1/3})p d(S_n(d)), 2\tfrac{\log n}{\log\omega}\}]=1-o(1).
    \end{align*}
\end{theorem}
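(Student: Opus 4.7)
The strategy is to split the vertex set as $[n]=S\cup L$ with $S:=S_n(d)$ and $L:=[n]\setminus S$, and to bound $L_1(G_p)$ by considering separately the components entirely contained in $L$ and those meeting $S$. Note that every vertex in $L$ has degree at most $d-1$ in $G$, so a BFS inside $L$ is governed by an offspring mean of at most $dp\le \omega^{-1}$, which is strictly subcritical.

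\textbf{Step 1 (components avoiding $S$).} For any $v\in L$ and any $k$, the event that the $G_p$-component of $v$ has size at least $k$ implies the existence of a subtree of size $k$ of $G[L]$ rooted at $v$ that is fully percolated. Using the standard bound $(ed)^{k-1}$ on the number of $k$-vertex rooted subtrees in a graph with maximum degree at most $d$, we get
$$
\Pr[|C_v|\ge k]\le (ed)^{k-1}p^{k-1}\le (e/\omega)^{k-1}.
$$
Choosing $k=2\log n/\log\omega$ yields $\Pr[|C_v|\ge k]\le n^{-2+o(1)}$; a union bound over $v\in L$ then shows that with probability $1-o(1)$, every component of $G_p$ contained in $L$ has size at most $2\log n/\log\omega$.

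\textbf{Step 2 (components meeting $S$).} Explore all components of $G_p$ intersecting $S$ by a BFS starting from the entire set $S$. Let $Z_1$ be the number of $L$-vertices discovered at the first step (that is, through edges with one endpoint in $S$) and $W$ the number discovered later. Then $Z_1$ is stochastically dominated by $\mathrm{Bin}(e(S,L),p)$, with mean at most $pd(S)$. Since $pd(S)\ge \omega\to\infty$, a Chernoff bound gives $Z_1\le (1+\omega^{-1/3})pd(S)$ w.h.p.\ whenever $pe(S,L)\ge \omega^{1/3}$; otherwise a direct Markov bound on $Z_1$ yields $Z_1\le \omega^{-1/3}pd(S)$ w.h.p. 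For $W$, each newly explored vertex lies in $L$ and has at most $d$ edges in $G$, each percolated with probability $p$, so the BFS from the $Z_1$ vertices is dominated by a branching process with offspring mean $dp\le\omega^{-1}$. Hence $\mathbb{E}[W\mid Z_1]\le Z_1\sum_{i\ge 1}\omega^{-i}=Z_1/(\omega-1)$, and Markov's inequality yields $W\le \omega^{-1/3}pd(S)$ w.h.p. Combining, the total size of components meeting $S$ is at most $|S|+Z_1+W\le |S|+(1+2\omega^{-1/3})pd(S)$ w.h.p.

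Taking the maximum of the two bounds gives the claimed inequality. The main delicate point is the concentration of $Z_1$: when most of $d(S)$ comes from edges inside $S$, so that $e(S,L)\ll d(S)$, a direct Chernoff bound is too weak and must be replaced by Markov applied to the trivial bound $Z_1\le\mathrm{Bin}(e(S,L),p)$; meeting both regimes cleanly at the $(1+2\omega^{-1/3})$ constant requires a careful split. A secondary technical point is that the exploration in Step 2 and the tree-counting in Step 1 must be coordinated through a single percolation realization, but since each step only reveals fresh edges not exposed before, the independence structure needed for Chernoff and Markov is preserved.
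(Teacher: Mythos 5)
Your proof follows the same decomposition as the paper's: bound the components contained in $L=[n]\setminus S_n(d)$ by a subcritical branching argument, and bound the union $\cC$ of components meeting $S$ by $|S|$ plus the first-level vertices plus the further BFS. Step~2 is essentially the paper's argument: the paper dominates the first-level count by $X\sim\mathrm{Bin}(d(S),p)$ (your $Z_1$; dominating directly by $\mathrm{Bin}(d(S),p)$ would also make your case-split on $pe(S,L)$ unnecessary, since $\bE[X]=pd(S)\ge\omega$ already gives Chernoff concentration), and bounds the later levels via the expected total progeny $\bE[|\cH(v)|-1]\le 2\omega^{-1}$ and Markov (your $W$ bound via $\sum_{i\ge 1}\omega^{-i}$ is the same estimate).

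The genuine difference is Step~1, and here there is a gap. The paper couples the exploration inside $L$ with a Bienaym\'e--Galton--Watson tree with offspring $\mathrm{Bin}(d,p)$ and applies a large-deviation tail bound for the total progeny, getting roughly $\bP(T\ge t)\le e^{-tI_{1/\omega}}$, which after a union bound over at most $n$ vertices yields $n^{-1+2/\log\omega}=o(1)$ for \emph{every} $\omega\to\infty$. You instead take a union bound over fully-percolated $k$-vertex rooted subtrees, giving $\bP[|C_v|\ge k]\le(edp)^{k-1}\le(e/\omega)^{k-1}$. This is a valid and more self-contained route, but it is weaker by a factor of roughly $\omega/e$, because a $k$-vertex tree has only $k-1$ edges: with $k=2\log n/\log\omega$ one gets $(e/\omega)^{k-1}=n^{-2+2/\log\omega}\cdot\omega/e$, so your claim ``$\le n^{-2+o(1)}$'' is only valid when $\omega=n^{o(1)}$. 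The theorem's hypotheses permit $\omega$ as large as $\Theta(n)$ (one always has $\omega\le\sqrt{d(S)/d}$, and this can be a constant multiple of $n$ when $d=O(1)$); in that regime your union bound over $v\in L$ tends to a positive constant rather than $0$, and Step~1 as written does not establish that components in $L$ have order at most $2\log n/\log\omega$. You would need to treat $\omega=n^{1-o(1)}$ separately, e.g.\ by a sharper subtree count for bounded $k$, or by a direct first-moment bound on $3$-vertex paths of $G_p[L]$ when $dp=O(1/n)$. A small additional point of wording: the event should be that the $G_p[L]$-component of $v$ (not the $G_p$-component) has size $\ge k$, since the $G_p$-component may extend into $S$; this is what matters for ``components contained in $L$'' and is what your subtree argument actually bounds.
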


Our final result gives a rather surprising example that there exist degree sequences for which the order of the largest component after $p$-percolation undergoes unboundedly many jumps (as a function of $p$). 
\begin{theorem}\label{thm example}
Fix any $k \in \mathbb N$. There exist an increasing sequence of non-negative real numbers $(\alpha_i)_{0 \le i \leq k}$ and two decreasing sequences of positive real numbers $(\beta_i)_{1 \le i \le k}$ and $(\delta_i)_{1 \le i \le k}$ such that the following holds:
Let $\cD_n^k$ be the degree sequence consisting of $n^{\alpha_i} - n^{\alpha_{i-1}}$ vertices of degree $n^{\beta_i}$ for every $i\in [k]$, and $n - n^{\alpha_k} +1$ vertices of degree $1$, and let $\mathbf{G} = \mathbf{G}(\cD_n^k)$. For every $i\in [k]$, a.a.s.
\begin{itemize}
    \item[(a)] if $p\ll n^{1-\alpha_i-2\beta_i}$, then $L_1(\mathbf{G}_{p}) = O( n^{1-\beta_{i} - \delta_i})$.
    \item[(b)] if $p\gg n^{1-\alpha_i-2\beta_i}$, then $L_1(\mathbf{G}_{p})\ge  n^{1-\beta_i}$.
\end{itemize}
\end{theorem}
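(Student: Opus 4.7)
The plan is to work with the configuration model representation of $G(\cD_n^k)$ and to analyse each ``layer'' $L_i:=\{v:d(v)=n^{\beta_i}\}$, of size $|L_i|=n^{\alpha_i}-n^{\alpha_{i-1}}$, separately. The parameters $(\alpha_i),(\beta_i),(\delta_i)$ will be chosen to satisfy: (a) $\alpha_i+\beta_i<1$ for every $i$ (so the degree-$1$ vertices dominate, and $m(n)=(1+o(1))n$); (b) $\alpha_i+2\beta_i>1$ for every $i$ (so the natural critical probabilities $p_i^\star:=n^{1-\alpha_i-2\beta_i}$ are all $o(1)$); (c) $\alpha_i+2\beta_i$ strictly decreasing in $i$ (so the $p_i^\star$ form a strictly increasing sequence separated by a power of $n$); (d) $\alpha_i+\beta_i$ strictly increasing in $i$ (so just below $p_i^\star$ the largest component of $G_p$ is governed by layer $i-1$); and (e) $0<\delta_i<\alpha_i+\beta_i-\alpha_{i-1}-\beta_{i-1}$, with the $\delta_i$ strictly decreasing. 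A concrete solution is $\alpha_i=i/(10k)$, $\beta_i=9/10-i/(15k)$, $\delta_i=(2k-i)/(120k^2)$.

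For part~b), fix $i$ and $p\gg p_i^\star$. In the configuration model, a pair of layer-$i$ vertices is joined by a percolated edge with probability $\Theta(pn^{2\beta_i}/m)=\Theta(pn^{2\beta_i-1})$, so the expected percolated degree of a layer-$i$ vertex \emph{inside} $L_i$ is $\Theta(pn^{\alpha_i+2\beta_i-1})\to\infty$. After a coupling argument showing that, conditionally on the edges leaving $L_i$, the induced subgraph $G_p[L_i]$ is close in distribution to a uniform random graph on $L_i$ with its concentrated (near-regular) induced degree sequence, an Erd\H{o}s--R\'enyi-type analysis analogous to the proof of Theorem~\ref{thm 5} produces a component $C^\star\subseteq L_i$ of size $\Theta(n^{\alpha_i})$, with failure probability exponentially small in $n^{\alpha_i}$. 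Each vertex of $C^\star$ has $\Theta(n^{\beta_i})$ half-edges leaving $L_i$; each such half-edge is paired with a degree-$1$ vertex with probability $1-o(1)$ and survives percolation independently with probability $p$. A Chernoff bound shows that $C^\star$ absorbs $\Theta(|C^\star|\cdot pn^{\beta_i})=\Theta(pn^{\alpha_i+\beta_i})\gg n^{1-\beta_i}$ further vertices, completing the lower bound.

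For part~a), fix $i$ and $p\ll p_i^\star$; adopting the convention $p_0^\star:=0$, we may assume $p\in(p_{i-1}^\star,p_i^\star)$ (else reduce to a smaller index). Run a BFS from any starting vertex and track the layers of visited vertices. Under (b)--(c), the intra-layer branching factor inside $L_j$ for $j\ge i$ is $\Theta(pn^{\alpha_j+2\beta_j-1})=o(1)$, so the exploration inside any such layer dies quickly; the expected number of cross-layer percolated edges from $L_j$ to $L_l$ (for $j\ge i$, $l<i$) is also $o(1)$ under the parameter choice; and degree-$1$ vertices are leaves. Consequently, a component containing a layer-$j$ vertex with $j\ge i$ has size at most $O(pn^{\beta_i})$ (essentially a star of degree-$1$ leaves), while a component meeting a layer-$j$ vertex with $j\le i-1$ has size $\Theta(pn^{\alpha_j+\beta_j})$ (the internal giant of $L_j$ plus its attached leaves). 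By (d) the maximum is attained at $j=i-1$, giving
\[
L_1(G_p) = O\bigl(p\,n^{\alpha_{i-1}+\beta_{i-1}}\bigr) = o\bigl(n^{1-\alpha_i-2\beta_i+\alpha_{i-1}+\beta_{i-1}}\bigr) = o\bigl(n^{1-\beta_i-\delta_i}\bigr),
\]
using $p\ll p_i^\star$ in the middle step and the parameter inequality (e) in the last.

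The main obstacle is the first step of part~b): even conditional on the edges with at least one endpoint outside $L_i$, the induced graph $G[L_i]$ is \emph{not} a uniform random graph on its induced degree sequence. Establishing the coupling required to apply the proof of Theorem~\ref{thm 5} needs a concentration result for the induced degree sequence together with a careful configuration-model argument. A secondary difficulty is that neither Theorem~\ref{thm 6}~a) nor Theorem~\ref{thm 7} applies in part~a) at the required precision (the former is too loose because $N_1(d,p)$ overcounts isolated leaves, the latter fails the hypothesis $p\ll 1/d$), so the BFS/branching-process bound must be carried out by hand.
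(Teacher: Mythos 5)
Your proposal has a fatal problem with the explicit parameters, and a more fundamental problem with the configuration-model framework. The paper's proof avoids both.

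\textbf{Parameter choice.} You propose $\alpha_i = i/(10k)$, $\beta_i = 9/10 - i/(15k)$. But then $\beta_i \ge 9/10 - 1/15 = 5/6 > 1/2$ for all $i \le k$, and in particular $\beta_i > \alpha_i$. This is disallowed, and not for a trivial reason: a vertex $v$ of degree $n^{\beta_i}$ can have at most $|S_n(d_i)| - 1 \approx n^{\alpha_i}$ neighbours inside $S_n(d_i)$, while your heuristic formula for the internal degree, $n^{\alpha_i + 2\beta_i - 1}$, satisfies $n^{\alpha_i + 2\beta_i - 1} \gg n^{\alpha_i}$ precisely when $\beta_i > 1/2$. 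So your claimed ``expected percolated degree of a layer-$i$ vertex inside $L_i$ is $\Theta(pn^{\alpha_i + 2\beta_i - 1})$'' is impossible: the true internal degree saturates at $\Theta(n^{\alpha_i})$, and the threshold for giantness inside $S_n(d_i)$ is no longer $n^{1 - \alpha_i - 2\beta_i}$ but $n^{-\alpha_i}$, which is a different exponent. The paper's parameters $\alpha_i = 1 - \tfrac{11}{8\cdot 5^{i+1}}$ (close to $1$) and $\beta_i = \tfrac{1}{5^{i+1}}$ (close to $0$) are precisely tuned to keep $\beta_i < \alpha_i$ and $\beta_i < 1/2$ while satisfying your conditions (a)--(e); these constraints are not mere bookkeeping but encode that each vertex of $S_n(d_i)$ has plenty of slack to accommodate $n^{\alpha_i + 2\beta_i - 1}$ internal edges.

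\textbf{Framework.} You propose to ``work with the configuration model representation of $G(\cD_n^k)$''. For these degree sequences, $\sum_v d(v)^2 / \sum_v d(v) \approx \sum_j n^{\alpha_j + 2\beta_j - 1} \to \infty$, so the configuration model is simple with probability tending to $0$ (indeed, exponentially small in a power of $n$). You therefore cannot transfer a.a.s.\ statements about $\mathrm{CM}(\cD_n^k)$ to $G(\cD_n^k)$ by conditioning on simplicity. The paper circumvents this entirely: it works directly with the uniform random graph $G(\cD_n)$ via switchings, and establishes (in Proposition~\ref{prop:gen2}, via Lemmas~\ref{lem 3.2} and~\ref{lem 3.3}) concentration of the induced degree sequence of $G[S_n(d)]$; the conditional uniformity of the three pieces $G[S_n(d)]$, $G[S_n(d), V\setminus S_n(d)]$, $G[V\setminus S_n(d)]$ given their degree sequences is then the hypothesis of Proposition~\ref{thm 7.1}, which routes through Theorem~\ref{thm 1}~b) (itself switching-based). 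Your flagged obstacle about uniformity is a red herring in the paper's framework precisely because the paper never relies on the configuration model; but it \emph{is} a genuine obstacle in yours, where multi-edges between high-degree vertices are abundant.

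\textbf{Part a).} Your sketch of the multi-layer BFS is in the same spirit as the paper's two-type branching process (Claim~\ref{claim X}), but you do not account for the possibility that a component picks up vertices from several layers $j \le i-1$ simultaneously or from layers $j \ge i$, nor for polylogarithmic corrections in the component-size bound. The paper handles this by exploring from $S^{i-1} \cup \{v\}$ all at once (so no iterated returns to high-degree layers are possible) and absorbing the polylog into the gap $\delta_i < \tfrac{9}{8\cdot 5^{i+1}}$. This is exactly the ``carried out by hand'' argument you flag, and it needs to be done; the paper's version is not long but is substantive.

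In short: after replacing your parameters by ones with $\beta_i < \min\{\alpha_i, 1/2\}$ and abandoning the configuration model in favour of switchings (as in the paper's Propositions~\ref{prop:gen2} and~\ref{thm 7.1}), your outline becomes essentially the paper's proof; as written, it would not close.
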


The previous theorem shows that there exist sequences having unboundedly many jumps in the order of the largest component. 
In fact, we also provide an example of sequences with unboundedly many jumps in the leading constant, see Section~\ref{sec:exm1}. A similar phenomenon has been observed by Riordan and Warnke in the context of the Achlioptas processes~\cite{riordanWarnke}.

\paragraph{Main ideas and plan of the paper.}
Since the focus of this paper is on dense degree sequences, classical methods based on the configuration model may not be applied in general, as a dense random graph might have a superexponentially small probability (in terms of the number of vertices) to be simple. 
Our main tool is the switching method; however, analysis of multi-type branching processes and several large deviation inequalities from probability theory, such as Janson's and Talagrand's inequalities, are also important in our arguments.

The paper is organized as follows. In Section~\ref{sec:tech} we introduce the switching method and other probabilistic tools. In Section~\ref{sec:mainUB} we provide the proofs of the upper bounds in Part~(a) of Theorems~\ref{thm 1} and~\ref{thm 6}, and in Theorem~\ref{thm 7}. 
The proofs of the lower bounds in Part (b) of Theorems~\ref{thm 1} and~\ref{thm 6}, and results on component jumps in Theorems~\ref{thm 4},~\ref{thm 2} and Corollary~\ref{thm 3} are then presented in Section~\ref{sec:mainLB}.
Since we reuse parts of the proofs of different theorems, the order in which the theorems are proven is different from the order in which they are stated above. Note that some proofs show slightly strengthened versions of the results in this section. In Section~\ref{sec:exm2} we present two examples of multi-jump behavior and, in particular, prove Theorem~\ref{thm example}.

\section{Preliminaries and tools}\label{sec:tech}
In this section we introduce the main technical tools used in our proofs.

\subsection{The switching method}\label{sec:switch}

Our key technique to prove a.a.s. statements is a simple combinatorial switching argument. In this section we describe the switchings that we will use. 

For any graph $G$ and four vertices $a, b, c, d\in V(G)$, provided that $ab, cd\in E(G)$ and $ad, bc\notin E(G)$ one may \emph{switch} the edges $ab$ and $cd$ along the 4--cycle $abcd$ to obtain the graph $G'$ with the same vertex set and with edge set $E(G')=(E(G)\cup\{ad,bc\})\setminus \{ab,cd\}$. Note that switching operations are reversible: if $G'$ is obtained from $G$ by switching the edges $ab$ and $cd$ to get $ad$ and $bc$, then $G$ is obtained from $G'$ by switching back the edges $ad$ and $bc$.

Switchings were introduced by Petersen in~\cite{Pet91}. The use of switchings to analyze graphs with prescribed degrees was pioneered by McKay~\cite{McK81}. For more information about the historical background see~\cite{Wor99}.

Let $\cG(\cD_n)$ be the set of simple graphs on $V$ with degree sequence $\cD_n$. Recall that any switching operation preserves the degree of each vertex, so any switching of a graph in $\cG(\cD_n)$ yields a graph in $\cG(\cD_n)$. The switching method allows us to compare the sizes of two subsets $\cE,\cF\subseteq \cG(\cD_n)$. Build an auxiliary bipartite multigraph with partite sets $\cE$ and $\cF$, where we add an edge between $G\in \cE$ and $G'\in \cF$ for every switching that transforms $G$ into $G'$, or equivalently $G'$ into $G$. Given a lower bound $d^-_\cE$ on the degrees of any $G \in \cE$ and an upper bound $d^+_\cF$
on the degrees of any $G' \in \cF$, a simple double-counting argument implies that
$$
d^-_\cE |\cE| \le d^+_\cF|\cF|.
$$
Since $\mathbf{G}(\cD_n)$ is uniform in $\cG(D_n)$, by abuse we may consider $\cE$ and $\cF$ as events in the probability space $\mathbf{G}(\cD_n)$. The previous inequality implies that
$$
\Pr(\cE) \le \frac{d^+_\cF}{d^-_\cE}\cdot \Pr(\cF).
$$
Our main object of interest is $\mathbf{G}(\cD_n)_p$. It is useful to imagine $\mathbf{G}(\cD_n)_p$ as the graph $\mathbf{G}(\cD_n)$ equipped with a 2--coloring of its edges: edges in blue correspond to the ones that percolate and edges in red correspond to the ones that do not percolate. The probability that we obtain any given edge-coloring of a graph $G\in \cG(\cD_n)$ with $\ell$ blue edges is
 \begin{align}\label{eq:unif_col}
 \frac{p^{\ell}(1-p)^{m/2-\ell}}{|\cG(\cD_n)|} = \frac{(1-p)^{m/2}}{|\cG(\cD_n)|}\left(\frac{p}{1-p}\right)^{\ell},
 \end{align}
with $m$ being equal to the sum of all degrees.
So, conditionally on the number of blue edges, we get a uniform random graph with degree sequence $\cD_n$ equipped with a uniform random 2--coloring with $\ell$ blue edges.
 
Switchings can be extended to edge-colored graphs. To be able to analyze $\mathbf{G}(\cD_n)_p$ via swichings, we only use switchings that preserve the number of blue edges. In such case,~\eqref{eq:unif_col} ensures that the probability of an edge-colored graph and the one obtained by applying a switching to it is the same. In this paper it will suffice to consider blue switchings, that is, we restrict the choice of $ab$ and $cd$ to the set of blue edges. To prevent the appearance of multiedges, when switching $ab$ and $cd$ along $abcd$, we will insist that $ad$ and $bc$ are not edges in our graph (that is, neither blue nor red). 

%\lyuc{I do not think that we need the next two paragraphs. Shall we take them out?}\gpc{we can remove them, indeed}\dmc{True that it is not needed, but explains the switching principle very well, and I would keep it by adding something like this: "For the sake of ease of readability, in the next two paragraphs we explain the essence of switching; readers familiar with this technique can omit reading them."}
%Let $\cH(\cD_n)$ be the set of spanning subgraphs of graphs in $\cG(\cD_n)$. For $\ell\in \{0,\dots,m\}$, let $\cH_\ell(\cD_n)$ be the subset of $\cH(\cD_n)$ composed of the graphs with $\ell$ blue edges. For any $\cE\subseteq \cH(\cD_n)$, let $\cE_\ell= \cE\cap \cH_\ell(\cD_n)$.
%Given $\cE,\cF\subseteq \cH(\cD_n)$, we now build $m+1$ auxiliary bipartite multigraphs with partite sets $(\cE_\ell, \cF_\ell)_{\ell}$ where we add an edge between $G\in \cE_\ell$ and $G'\in \cF_\ell$ for every switching that transforms $G$ into $G'$, or equivalently $G'$ into $G$. Given a uniform lower bound $d^-_\cE$ on the degrees in $\cE_\ell$ and a uniform upper bound $d^+_\cF$ on the degrees in $\cF_\ell$, we get
%$$d^-_\cE |\cE_{\ell}| \le d^+_\cF|\cF_{\ell}|.$$
%and
%$$\Pr(\cE) = \sum_{\ell=0}^m \Pr(\cE_\ell) \le \frac{d^+_\cF}{d^-_\cE} \sum_{\ell=0}^m \Pr(\cF_\ell) = \frac{d^+_\cF}{d^-_\cE}\cdot \Pr(\cF).$$

We state a simple lemma on the number of pairs of oriented edges such that after switching them, one obtains additional components.

\begin{lemma}\label{Felix and Guillem}\cite{joos2016how}
The number of pairs of oriented edges $uv, xy$ in a graph $G$ of order $n$ such that, by switching $uv$ and $xy$, we obtain a graph with one more component than $G$, is at most $8n^2$.
\end{lemma}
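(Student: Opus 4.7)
The approach is to characterize when the switching $(uv,xy)\mapsto(uy,vx)$ actually creates an extra component, and then to bound the number of such good pairs using the block structure of $G$ (its bridges and $2$-edge-connected components). Let $H := G - uv - xy$. Both $G$ and the switched graph $G'$ are obtained from $H$ by adding two edges, so $c(G') - c(G) = r_G - r_{G'}$, where $r_G, r_{G'}\in\{0,1,2\}$ count the number of merges each pair of added edges performs on the components of $H$. Crucially, both $r_G$ and $r_{G'}$ depend only on the partition of $\{u,v,x,y\}$ determined by the components of $H$. I would enumerate the $15$ set partitions of a four-element set and compute $r_G-r_{G'}$ for each; this check shows that $c(G')-c(G)=1$ occurs exactly when the partition is one of (i)~$\{u,y\}\mid\{v,x\}$, (ii)~$\{u\}\mid\{y\}\mid\{v,x\}$, or (iii)~$\{v\}\mid\{x\}\mid\{u,y\}$.

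In case (i), $u,y$ lie in one component of $H$ and $v,x$ in another, so neither $uv$ nor $xy$ is a bridge of $G$, and $\{uv, xy\}$ is a $2$-edge cut of $G$. By the Dinits--Karzanov--Lomonosov cactus representation, a $2$-edge-connected graph on $k$ vertices has at most $\binom{k}{2}$ pairs of edges forming a $2$-edge cut. Summing over the (vertex-disjoint) $2$-edge-connected components of $G$ gives at most $\binom{n}{2}$ unordered good pairs. A direct check shows that each such unordered pair yields exactly $4$ ordered oriented pairs consistent with partition (i) (two orderings and, once the ordering is fixed, two orientations that put $u, y$ on the same side of the cut), so case (i) contributes at most $4\binom{n}{2}\le 2n^2$ good pairs.

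In cases (ii) and (iii), the removal of $uv$ and $xy$ splits one component of $G$ into three pieces, which forces both $uv$ and $xy$ to be bridges of $G$ (belonging to the same component). Since a connected graph on $k$ vertices has at most $k-1$ bridges, the number of unordered pairs of bridges is at most $\binom{n-1}{2}$. For each such pair, the bridge tree of $G$ identifies a unique ``middle'' piece, which in turn forces the orientations of $uv$ and $xy$; each of partitions (ii) and (iii) contributes exactly $2$ ordered oriented pairs per unordered pair of bridges (two orderings, one orientation each). Hence cases (ii) and (iii) together contribute at most $4\binom{n-1}{2}\le 2n^2$ good pairs.

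Summing the contributions gives at most $4n^2\le 8n^2$ good ordered oriented pairs, as claimed. The main obstacle will be the mechanical but error-prone bookkeeping: the $15$-way enumeration in Step~1 must be carried out correctly, and the orientation counts per $2$-edge cut or per pair of bridges must be verified carefully so that each good pair is counted exactly once and the final constant stays within $8$. The use of the Dinits--Karzanov--Lomonosov bound is a clean black box input; alternatively, one can invoke it separately for each $2$-edge-connected component and use convexity $\sum_i\binom{n_i}{2}\le\binom{n}{2}$ to combine.
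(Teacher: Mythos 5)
The paper does not prove this lemma; it is quoted as a known result from Joos, Perarnau, Rautenbach and Reed, so there is no in-paper argument to compare against. Your proof is correct as far as I can check. The reduction to $H = G - uv - xy$ and the observation that $c(G')-c(G)$ depends only on the partition of $\{u,v,x,y\}$ induced by the components of $H$ are sound, and I re-derived the $15$-case enumeration: the $+1$ cases are exactly (i)~$\{u,y\}\mid\{v,x\}$, (ii)~$\{u\}\mid\{y\}\mid\{v,x\}$, (iii)~$\{v\}\mid\{x\}\mid\{u,y\}$, as you list. In case~(i) the pair $\{uv,xy\}$ is a $2$-edge cut with neither edge a bridge, hence lies inside a single $2$-edge-connected block; the classical $\binom{k}{2}$ bound on size-$2$ cuts in a $2$-edge-connected graph on $k$ vertices (Dinitz--Karzanov--Lomonosov cactus, or more elementarily via Karger's random-contraction argument) together with $\sum_i\binom{n_i}{2}\le\binom{n}{2}$ over vertex-disjoint blocks and your orientation count of $4$ per cut gives at most $4\binom{n}{2}$. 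In cases~(ii) and~(iii) both edges are forced to be bridges of a common component, and the bridge tree has a unique middle piece which fixes the orientations exactly as you say, giving $2+2$ ordered oriented pairs per unordered pair of bridges and at most $4\binom{n-1}{2}$ in total. The grand total $\le 4n^2$ is comfortably within $8n^2$. One minor remark: the lemma's phrase ``pairs of oriented edges'' is ambiguous between ordered and unordered pairs, but you bound the larger (ordered) count, so the conclusion holds under either reading.
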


\subsection{Connection between average and minimum degree in graphs}

In this subsection, we state a version of a classical result that holds for all deterministic graphs, and provide a proof for completeness.

\begin{lemma}\label{lem:avg to min}
Fix an $n$-vertex graph $G$, $d = d(n)$ satisfying $|S_n(d)| = \Omega(n)$ and fix $d_{\rm{min}} < d/2$. Then, $G$ has a subgraph $H$ containing at least 
\[|S_n(d)| - \frac{(n-|S_n(d)|)d_{\min}}{d-2d_{\min}}\]
vertices and minimum degree at least $d_{\rm{min}}$. 
In fact, if $d_{\min} = o(d)$, then one may ensure that $H$ contains $(1-o(1))|S_n(d)|$ vertices and a $(1-o(1))$-proportion of all edges of $G$.
\end{lemma}
\begin{proof}
We perform the following greedy algorithm. 
Let $G_0=G$, $A_0 = \emptyset$ and $B_0 = [n]$. 
At step $i\ge 1$, choose the smallest vertex with degree less than $d_{\rm{min}}$ in $G_{i-1}$. 
If there is no such vertex, set $H=G_{i-1}$, which has minimum degree at least $d_{\rm{min}}$, and terminate. 
Otherwise, let $j_i$ be the smallest vertex in $V(G_{i-1})$ of degree less than $d_{\rm{min}}$. Delete $j_i$ from $G_{i-1}$ to obtain $G_i$, update $A_i = A_{i-1}\cup \{j_i\}, B_i = B_{i-1}\setminus \{j_i\}$ and go to step $i+1$.

Let $f+1$ be the step at which the algorithm terminates, i.e. $H=G_f$. 
At any step $i\ge 0$, the number of edges in the cut $(A_i,B_i)$ increases by less than $d_{\rm{min}}$, so the cut $(A_f,B_f)$ has at most $|A_f| d_{\rm{min}}$ edges. 
On the other hand, if at step $i$ a vertex with degree (in $G$) at least $d$ is chosen as $j_i$, the number of edges in the cut $(A_i,B_i)$ decreases by at least $(d-d_{\rm{min}})-d_{\rm{min}} = d - 2d_{\rm{min}}$. Therefore, the number of steps at which a vertex of degree at least $d$ is deleted is at most
\begin{equation*}
    \dfrac{(n-|S_n(d)|)d_{\min}}{d - 2d_{\min}},
\end{equation*}
which proves the first part. 
The second part follows from the fact that, if $d_{\min} = o(d)$, then only $o(n) = o(|S_n(d)|)$ vertices and a negligible proportion of the edges can be deleted in the above procedure.
\end{proof}

\begin{remark}\label{rem:unifH}
We remark that, if the graph $G$ in Lemma~\ref{lem:avg to min} is a random graph that is uniformly distributed on its degree sequence, so is $H$. 
We prove this by induction (with the notation from the proof of Lemma~\ref{lem:avg to min})
on $i\in [0, f]$. 
For $i = 0$, the observation is trivial. 
Suppose that the induction hypothesis holds at step $i-1$. 
Since $G_{i-1}$ has a uniform distribution by the induction hypothesis, by conditioning on the edges (and non-edges) incident to $j_i$, 
the remaining graph must be uniformly distributed over the set of all graphs on $B_{i-1}$ and the degree sequence $\cD_{i-1}$ of $G_{i-1}$ 
respecting the conditioning since each of them corresponds to exactly one graph including $j_i$, 
and each such graph is chosen with the same probability. 
Thus, deleting the vertex $j_i$ leads to a uniform random graph $G_i$ on vertex set $B_i$ with the corresponding degree sequence, as desired.
\end{remark}

\subsection{Stochastic comparison of the components of vertices with different degrees}

In this section we show a general preliminary lemma saying that a vertex with higher degree will have a higher probability to be in a larger component of $\mathbf{G}(\cD_n)_p$.

\begin{lemma}\label{lem dom}
Fix $u,v\in V$ with $d(u)\le d(v)$ and $p\in[0,1]$. Let $\cC_u$ and $\cC_v$ be the components in $\mathbf{G}(\cD_n)_p$ containing $u$ and $v$, respectively. Then, $|V(\cC_u)|$ is stochastically dominated by $|V(\cC_v)|$.
\end{lemma}
\begin{proof}
We may assume that $uv\notin \mathbf{G}(\cD_n)$; otherwise, the argument is analogous by assuming that $uv\notin E(\mathbf{G}(\cD_n)_p)$ and reducing both degrees by 1 (clearly, if $uv$ is an edge that percolates, $\cC_u = \cC_v$). 
Choose $S\subseteq V\setminus\{u,v\}$ with $|S|\in [d(v),d(u)+d(v)]$, and $T\subseteq S$ with $|T|=d(u)+d(v)-|S|$. 
Let $A_{S,T}$ be the event in $\mathbf{G}(\cD_n)$ that $S=N(u)\cup N(v)$ and $T=N(u)\cap N(v)$. 
Conditionally on $A_{S,T}$, expose the graph $\mathbf{G}(\cD_n)_p$ induced by $V\setminus \{u,v\}$.  
Given $\mathbf{G}(\cD_n)_p\setminus \{u,v\}$, the order of the components of $u$ and $v$ in $\mathbf{G}(\cD_n)_p$ is determined by the state of the edges between $S$ and $u$ and between $S$ and $v$, respectively. 
A simple counting argument implies that, for every $N\subseteq S$ and a degree sequence with $d(u) \le d(v)$, the probability that $N\subseteq N(u)$ in $\mathbf{G}(\cD_n)$ is at most the probability that $N\subseteq N(v)$ in $\mathbf{G}(\cD_n)$. 
It follows that we can couple $\cC_u$ and $\cC_v$ such that $|V(\cC_u)|\leq |V(\cC_v)|$, as desired.
\end{proof}

\subsection{Further probabilistic ingredients}

In this subsection we gather several large deviation probabilistic inequalities.

\begin{lemma}[Chernoff's inequality, see e.g.~\cite{JLR}]\label{chernoff}
Let $(X_i)_{i=1}^n$ be independent random variables taking values in $\{0,1\}$, and define $X = X_1 + X_2 + \dots + X_n$ and $\mu = \mathbb E[X]$. Then,
\begin{align*}
    & \mathbb P(X_1 + \dots + X_n \le (1-\delta)\mu) \le \exp\left(-\dfrac{\delta^2 \mu}{2}\right)\text{ for every } \delta\in [0,1], \; and\\
    & \mathbb P(X_1 + \dots + X_n \ge (1+\delta)\mu) \le \exp\left(-\dfrac{\delta^2 \mu}{2+\delta}\right)\text{ for every } \delta\ge 0.
\end{align*}
\end{lemma}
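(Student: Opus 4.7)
The plan is to apply the classical exponential moment method (sometimes called the Bernstein--Chernoff method). For the upper tail, I would fix $t > 0$ and apply Markov's inequality to the non-negative random variable $e^{tX}$:
\begin{equation*}
\mathbb{P}(X \ge (1+\delta)\mu) \;=\; \mathbb{P}(e^{tX} \ge e^{t(1+\delta)\mu}) \;\le\; e^{-t(1+\delta)\mu}\, \mathbb{E}[e^{tX}].
\end{equation*}
By independence of the $X_i$, the moment generating function factorizes as $\mathbb{E}[e^{tX}] = \prod_{i=1}^n \mathbb{E}[e^{tX_i}]$. Writing $p_i = \mathbb{P}(X_i = 1)$, an explicit calculation gives $\mathbb{E}[e^{tX_i}] = 1 + p_i(e^t - 1) \le \exp(p_i(e^t - 1))$, using the elementary inequality $1 + x \le e^x$. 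Multiplying these bounds over $i$ collapses the dependence on the individual $p_i$ into the mean, yielding $\mathbb{E}[e^{tX}] \le \exp(\mu(e^t - 1))$.

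Next I would optimize over $t$. Setting $t = \log(1+\delta)$ (optimal for the resulting expression) produces
\begin{equation*}
\mathbb{P}(X \ge (1+\delta)\mu) \le \exp\bigl(\mu\bigl[\delta - (1+\delta)\log(1+\delta)\bigr]\bigr),
\end{equation*}
and the stated bound then reduces to the elementary inequality
\begin{equation*}
(1+\delta)\log(1+\delta) - \delta \;\ge\; \frac{\delta^2}{2+\delta}, \qquad \delta \ge 0.
\end{equation*}
This can be verified by setting $f(\delta) = (1+\delta)\log(1+\delta) - \delta - \delta^2/(2+\delta)$, observing that $f(0) = 0$, and checking that $f'(\delta) \ge 0$ on $[0, \infty)$ by a direct computation.

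For the lower tail I would apply the same technique to $e^{-tX}$ with $t > 0$, using $\mathbb{P}(X \le (1-\delta)\mu) \le e^{t(1-\delta)\mu} \mathbb{E}[e^{-tX}]$, factorizing as before, and then choosing $t = -\log(1-\delta)$ (valid for $\delta < 1$, with the boundary case handled by continuity). This gives
\begin{equation*}
\mathbb{P}(X \le (1-\delta)\mu) \le \exp\bigl(\mu\bigl[-\delta - (1-\delta)\log(1-\delta)\bigr]\bigr),
\end{equation*}
and the conclusion follows from the companion inequality $(1-\delta)\log(1-\delta) + \delta \ge \delta^2/2$ on $[0,1]$, again a routine calculus check (or a Taylor expansion comparison).

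The main obstacle here is not conceptual but purely computational: producing the two elementary functional inequalities with the precise constants $2$ and $2+\delta$. Both are standard and, in any case, the result is being quoted as a black box from~\cite{JLR}; the sketch above is included primarily to record where those specific constants originate.
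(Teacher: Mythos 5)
Your proposal is correct and is the standard exponential moment (Chernoff--Bernstein) argument. The paper does not give a proof of this lemma at all: it is stated as a known result and cited directly from~\cite{JLR}, which is exactly the source where the two bounds appear in this form (with the $\delta^2\mu/2$ and $\delta^2\mu/(2+\delta)$ exponents). Your derivation -- Markov's inequality on $e^{tX}$, factorizing the moment generating function by independence, the bound $1+x\le e^x$ to reduce to the Poisson-type bound $\exp(\mu(e^t-1))$, optimizing at $t=\log(1+\delta)$ (resp.\ $t=-\log(1-\delta)$), and then finishing with the elementary inequalities $(1+\delta)\log(1+\delta)-\delta\ge \delta^2/(2+\delta)$ and $(1-\delta)\log(1-\delta)+\delta\ge\delta^2/2$ -- is precisely how the constants in the cited form are obtained, so there is nothing to reconcile between your argument and the paper's; the paper simply delegates this to the reference.

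Both elementary calculus checks you defer are sound: for the lower tail, $g(\delta)=(1-\delta)\log(1-\delta)+\delta-\delta^2/2$ satisfies $g(0)=0$ and $g'(\delta)=-\log(1-\delta)-\delta\ge 0$; for the upper tail, $f(\delta)=(1+\delta)\log(1+\delta)-\delta-\delta^2/(2+\delta)$ satisfies $f(0)=f'(0)=0$ and $f''\ge 0$ because $(2+\delta)^3\ge 8(1+\delta)$ for $\delta\ge 0$.
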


\noindent 
Next, we say that a function $f: \mathbb{R}^n \to \mathbb{R}$ is \emph{$c$-Lipschitz} if for every $i\in [n]$ and any real numbers $(z_j)_{j=1}^n$ and $z'_i$ we have
$$
|f(z_1,\dots,z_i,\dots, z_n)- f(z_1,\dots,\hat{z}_i,\dots, z_n)| \leq c.
$$
We say that a function $f: \mathbb{R}^n \to \mathbb{R}$ is \emph{$r$-certifiable} if whenever $f(z)\geq s$, there exists a set $I\subseteq [n]$ with $|I|\leq rs$ elements such that any $\hat{z} \in \mathbb{R}^n$ agreeing with $z$ on all coordinates in $I$ satisfies $f(\hat z)\geq s$.

\begin{lemma}[Talagrand's inequality for certifiable Lipschitz functions, see e.g.~\cite{molloy2002graph}]\label{thm:McD2}
Let $(Z_i)_{i=1}^n$ be independent random variables and $f:\mathbb R^n\to \mathbb R$ be a $c$-Lipschitz and $r$-certifiable function. Denote $X=f(Z_1,\dots,Z_n)$. Then, for every $t>0$,
$$
\Pr(|X- \bE[X]| > t + 60c\sqrt{r\bE[X]})\leq 4\exp\left(-\frac{t^2}{8c^2r\bE[X]}\right).
$$
\end{lemma}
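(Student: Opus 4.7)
The plan is to deduce the inequality from Talagrand's convex distance inequality, which I would invoke as a black box. Recall that for any measurable set $A\subseteq \mathbb{R}^n$ and any product measure $Z=(Z_1,\dots,Z_n)$ of independent coordinates,
\[
\bP(Z\in A)\cdot \bE\bigl[\exp(d_T(A,Z)^2/4)\bigr]\le 1,\qquad \text{where}\qquad d_T(A,z)=\sup_{\substack{\alpha\in\mathbb{R}^n_{\ge 0}\\ \|\alpha\|_2\le 1}}\inf_{y\in A}\sum_{i:\,y_i\ne z_i}\alpha_i.
\]
Talagrand's proof of this inequality proceeds by induction on $n$ using a convexity trick; I would not attempt to redo it.

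The crux of the argument is translating the $c$-Lipschitz and $r$-certifiable hypotheses into a lower bound on $d_T(A,Z)$ for a suitably chosen $A$. Let $M$ be a median of $X=f(Z)$ and set $A=\{z:f(z)\le M\}$, so that $\bP(Z\in A)\ge 1/2$. Assume $f(Z)\ge M+t$. By $r$-certifiability applied to $Z$, there exists a set $I\subseteq [n]$ with $|I|\le r f(Z)\le r(M+t)$ such that any $\hat z$ agreeing with $Z$ on $I$ still satisfies $f(\hat z)\ge M+t$. Consequently, for every $y\in A$ the $c$-Lipschitz property forces $|\{i\in I:y_i\ne Z_i\}|\ge t/c$: otherwise the vector obtained by replacing the $I$-coordinates of $y$ by those of $Z$ would agree with $Z$ on $I$ (hence have $f\ge M+t$) yet differ from $y$ on fewer than $t/c$ coordinates (hence have $f<M+t$), a contradiction. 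Choosing $\alpha=\mathbf{1}_I/\sqrt{|I|}$ in the supremum defining $d_T(A,Z)$ yields $d_T(A,Z)\ge (t/c)/\sqrt{r(M+t)}$, and then Markov's inequality combined with Talagrand's inequality gives the one-sided median bound
\[
\bP(X\ge M+t)\le 2\exp\!\left(-\frac{t^2}{4c^2 r(M+t)}\right).
\]
The lower tail $\bP(X\le M-t)$ is handled by a symmetric argument: take $A=\{z:f(z)\ge M\}$, apply $r$-certifiability to each $y\in A$ at level $M$ to obtain witness sets $I_y$ with $|I_y|\le rM$, and use the minimax/dual form of $d_T$ (over probability measures supported on $A$) to bound $d_T(A,Z)\gtrsim t/(c\sqrt{rM})$ whenever $f(Z)\le M-t$.

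The final step is to convert these median tail bounds into a bound around $\bE[X]$. Integrating the two bounds above shows $|M-\bE[X]|\le C_0 c\sqrt{rM}$ for an absolute constant $C_0$; solving the resulting implicit inequality for $M$ (assuming without loss of generality that $c^2 r\le \bE[X]$, since otherwise the target inequality is trivial for a suitable choice of constants) then gives $M\le 2\bE[X]$ and $|M-\bE[X]|\le 60c\sqrt{r\bE[X]}$. Plugging this back into the median tail bounds and using $M+t\le 2\bE[X]$ in the relevant regime (with the complementary regime absorbed into the $t+60c\sqrt{r\bE[X]}$ offset) yields the stated inequality, with the denominator $8c^2 r\bE[X]$ in the exponent appearing from the factor $2$ in $M+t\le 2\bE[X]$. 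The main obstacle is precisely this last bookkeeping: the numerical constant $60$ and the factor $8$ require splitting the range of $t$ and absorbing subleading terms, but nothing conceptual happens beyond the convex-distance estimate of the second paragraph.
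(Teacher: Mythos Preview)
The paper does not prove this lemma; it is stated as a known result with a reference to Molloy and Reed's book, so there is no ``paper's own proof'' to compare against. Your sketch is the standard derivation via Talagrand's convex distance inequality and is essentially correct.

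One small slip: when you write ``$|I|\le r f(Z)\le r(M+t)$'', the second inequality goes the wrong way, since you are assuming $f(Z)\ge M+t$. What you actually want is to apply $r$-certifiability at the threshold $s=M+t$ (which is legitimate because $f(Z)\ge M+t$), giving directly $|I|\le r(M+t)$; the intermediate $rf(Z)$ should be dropped. This does not affect the rest of the argument.
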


We will also use two inequalities for product measures on powersets. Let $E$ be a finite set and $2^{E}$ its powerset. For $q\in [0,1]$, let $\Pr_q$ be the $q$-product measure on $2^{E}$, that is, $\Pr_q(A)=q^{|A|}(1-q)^{|E\setminus A|}$ for all $A\subseteq E$. A family $\mathcal A\subseteq 2^E$ is called \emph{increasing} if for every pair of sets $A,B\subseteq E$ satisfying $A\subseteq B$ and $A\in \cA$ we also have $B\in \cA$.

\begin{lemma}[Harris' inequality~\cite{harris1960lower}]\label{harris}
Let $q \in [0,1]$. Any pair of increasing families $\cA,\cB \subseteq 2^E$ satisfies
$$
\Pr_q(\cA)\Pr_q(\cB)\leq \Pr_q(\cA\cap \cB).
$$
\end{lemma}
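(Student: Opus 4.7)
The plan is to prove Harris' inequality by induction on $|E|$, reducing the general product-measure statement to the trivial one-variable correlation inequality. The base case $|E|=1$ is routine: any increasing family is among $\emptyset$, $\{\{e\}\}$, $\{\emptyset,\{e\}\}$, and the inequality can be checked directly in all finitely many cases.

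For the inductive step, assume the inequality holds on powersets of size $n$ and let $|E|=n+1$. Pick any element $e \in E$ and set $E' = E \setminus \{e\}$. For an increasing family $\cF \subseteq 2^E$, define its two \emph{slices}
\begin{equation*}
\cF_0 = \{A \in \cF : e \notin A\} \subseteq 2^{E'}, \qquad \cF_1 = \{A \setminus \{e\} : A \in \cF,\ e \in A\} \subseteq 2^{E'}.
\end{equation*}
The first key observation is that each slice is an increasing family in $2^{E'}$, and that monotonicity of $\cF$ implies $\cF_0 \subseteq \cF_1$ (adding $e$ to a set in $\cF_0$ produces a set in $\cF$ whose $e$-deletion lies in $\cF_1$). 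Apply this to both $\cA$ and $\cB$ and set $a_i = \Pr_q^{E'}(\cA_i)$, $b_i = \Pr_q^{E'}(\cB_i)$ for $i \in \{0,1\}$, noting $a_1 \geq a_0$ and $b_1 \geq b_0$.

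Next I would condition on whether $e$ appears, which gives
\begin{align*}
\Pr_q(\cA) &= (1-q)a_0 + q a_1, \\
\Pr_q(\cB) &= (1-q)b_0 + q b_1, \\
\Pr_q(\cA \cap \cB) &= (1-q)\Pr_q^{E'}(\cA_0 \cap \cB_0) + q \Pr_q^{E'}(\cA_1 \cap \cB_1).
\end{align*}
The inductive hypothesis applied to the pairs $(\cA_0, \cB_0)$ and $(\cA_1, \cB_1)$ in $2^{E'}$ yields $\Pr_q(\cA \cap \cB) \geq (1-q) a_0 b_0 + q a_1 b_1$. A short algebraic manipulation then reduces the desired inequality to
\begin{equation*}
(1-q)a_0 b_0 + q a_1 b_1 - \bigl((1-q)a_0 + q a_1\bigr)\bigl((1-q)b_0 + q b_1\bigr) = q(1-q)(a_1 - a_0)(b_1 - b_0),
\end{equation*}
which is non-negative precisely because of the slice monotonicity $a_1 \geq a_0$, $b_1 \geq b_0$ established above.

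No step here is genuinely hard; the only moving part worth emphasising is the fact that increasing families split into increasing \emph{nested} slices, since this is what turns the inductive application into the nonnegative product $(a_1-a_0)(b_1-b_0)$. In spirit, this is just the observation that the one-dimensional FKG inequality reduces to "covariance of two comonotone functions of a single Bernoulli is non-negative," and the induction propagates this coordinate by coordinate through the product measure.
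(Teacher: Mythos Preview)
Your proof is correct; this is the standard inductive proof of Harris' inequality, and every step checks out (the slice monotonicity $\cF_0\subseteq\cF_1$, the conditioning identity, and the algebraic identity yielding $q(1-q)(a_1-a_0)(b_1-b_0)\ge 0$).

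There is nothing to compare against: the paper does not prove this lemma but simply quotes it as a classical result with a citation to Harris~\cite{harris1960lower}. Your argument is exactly the textbook one, so it is an appropriate way to fill in the omitted proof.
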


\begin{lemma}[A version of Janson's inequality, see, e.g.,~\cite{JLR}]\label{janson}
Let $E_q$ be a $\Pr_q$-random subset of $E$ and let $\{F_i\}_{i\in I}$ be a collection of subsets of $E$. For $i\in I$, let $X_i=\mathds 1_{F_i\subseteq E_q}$, $X=\sum_{i\in I} X_i$ and $\mu = \mathbb E[X]$.
Define also $\mathcal T = \{(i,j)\in \mathcal I^2\mid F_i\cap F_j\neq \emptyset\}$ and let $\Delta = \sum_{(i,j)\in \mathcal T} \bE[X_iX_j]$. Then, for every $t\in [0,\mu]$,  
\begin{equation*}
    \Pr_q(X\le \mu-t)\le \exp\left(-\frac{t^2}{2(\mu+\Delta)}\right).
\end{equation*}
\end{lemma}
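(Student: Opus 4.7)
The plan is to prove this via the classical Chernoff--Laplace transform method. For any parameter $s \geq 0$, Markov's inequality applied to $e^{-sX}$ gives
$$\Pr_q(X \leq \mu - t) \leq e^{s(\mu - t)}\, \bE_q[e^{-sX}],$$
so the entire task reduces to a single moment generating function estimate. The target estimate is: for all $s \in [0, 1]$,
$$\bE_q[e^{-sX}] \leq \exp\left(-s\mu + \tfrac{s^2 (\mu + \Delta)}{2}\right). \qquad (\star)$$
Once $(\star)$ is in hand, substituting it into the Markov bound and optimizing over $s$ with the choice $s^* = t/(\mu + \Delta)$---which lies in $[0, 1]$ because $t \leq \mu \leq \mu + \Delta$---produces exactly the claimed bound $\exp(-t^2/(2(\mu + \Delta)))$ after routine algebra.

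To establish $(\star)$, I would work with $\psi(s) := \log \bE_q[e^{-sX}]$, which satisfies $\psi(0) = 0$ and $\psi'(0) = -\mu$. It suffices to prove the differential inequality $\psi'(s) + \mu \leq s(\mu + \Delta)$ on $[0,1]$, or equivalently $\bE_s[X] \geq \mu - s(\mu + \Delta)$, where $\bE_s$ denotes expectation under the exponentially tilted measure with density $e^{-sX} / \bE_q[e^{-sX}]$ with respect to $\Pr_q$; integrating then yields $(\star)$. Lower-bounding $\bE_s[X] = \sum_i \Pr_s(X_i = 1)$ amounts to showing that, under the tilt, each marginal $\Pr_s(X_i = 1)$ has dropped by at most a controlled amount from its untilted value $\bE_q[X_i]$, with the total drop across $i$ bounded by $s \Delta$.

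The main obstacle is precisely this last bound. Since the tilt $e^{-sX}$ is a decreasing function of the $q$-random subset $E_q$, Harris' inequality (Lemma~\ref{harris}) suggests that tilting can only decrease probabilities of increasing events such as $\{X_i = 1\}$; the quantitative control of this decrease is where the interaction structure encoded by $\mathcal T$ enters. The plan is to reveal the coordinates of $E$ in a suitable order, iteratively apply Harris-type conditional inequalities, and at each step account for the correlation between $X_i$ and those $X_j$ with $F_i \cap F_j \neq \emptyset$. The aggregate error across all such dependent pairs telescopes to exactly $s \Delta$, which is the role played by $\Delta$ in the statement. This accounting is the technical heart of the proof; the exponentiation and optimization steps surrounding it are standard and appear in essentially the same form in any modern presentation of Janson's inequality.
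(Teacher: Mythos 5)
The paper does not prove Lemma~\ref{janson} --- it is cited from~\cite{JLR} --- so there is no internal proof to compare against. Your plan is the standard Chernoff--Laplace transform route, which is also how the cited reference proves this bound. The reduction to the MGF estimate $(\star)$ and the optimization at $s=t/(\mu+\Delta)$ (valid because $t\le\mu\le\mu+\Delta$) are exactly right.

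Two points in your sketch of the technical heart are off, however, and they are worth flagging since that step carries the whole content of the lemma. First, ``reveal the coordinates of $E$ in a suitable order, iteratively apply Harris-type conditional inequalities'' suggests a sequential, martingale-style exposure of $E$, which is not how the classical argument goes and would be considerably more delicate to make rigorous. The standard step is a single application of Harris per index $i$: decompose $X=X_i+Y_i+Z_i$ with $Y_i=\sum_{j\ne i,\,(i,j)\in\mathcal T}X_j$ and $Z_i=X-X_i-Y_i$, condition on $X_i=1$ (i.e.\ on $F_i\subseteq E_q$), and apply Harris on the product space $\{0,1\}^{E\setminus F_i}$ to the two decreasing functions $e^{-sY_i}$ and $e^{-sZ_i}$, giving $\bE[e^{-s(Y_i+Z_i)}\mid X_i=1]\ge \bE[e^{-sY_i}\mid X_i=1]\,\bE[e^{-sZ_i}]$; then use independence of $X_i$ and $Z_i$, the pointwise bound $e^{-sZ_i}\ge e^{-sX}$, and $\bE[e^{-sY_i}\mid X_i=1]\ge 1-s\,\bE[Y_i\mid X_i=1]$. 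Second, your claim that ``the aggregate error telescopes to exactly $s\Delta$'' (which would give $\bE_s[X]\ge\mu-s\Delta$) is not what that computation produces. Summing over $i$ and using $\Pr(X_i=1)\,\bE[Y_i\mid X_i=1]=\sum_{j\ne i,\,(i,j)\in\mathcal T}\bE[X_iX_j]$ yields $\bE_s[X]\ge e^{-s}\bigl(\mu-s(\Delta-\mu)\bigr)$; only after expanding $e^{-s}\ge 1-s$ and a short case check does one obtain the weaker, needed inequality $\bE_s[X]\ge\mu-s(\mu+\Delta)$ on $[0,1]$. The $s\mu$ portion of the allowed drop comes from the overall prefactor $e^{-s}=e^{-sX_i}\big|_{X_i=1}$, not from the pairwise interaction term. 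With these corrections the integration and optimization you describe do close the argument.
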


Finally, for an integrable probability distribution $\mu$ on the set of non-negative integers with expectation $\mathbb E[\mu]$, we recall that the \emph{Bienaim\'e-Galton-Watson tree} with offspring distribution $\mu$ is constructed as follows: starting from a single vertex (the root), every vertex produces a number of children distributed according to $\mu$ and independent from the offspring of all other vertices. In particular, defining $X_k$ as the number of vertices at graph distance $k\ge 1$ from the root, we get that $\mathbb E[X_k] = \mathbb E[\mathbb E[X_k\mid X_{k-1}]] = \mathbb E[\mu]\cdot \mathbb E[X_{k-1}] = \ldots = \mathbb E[\mu]^k$.

\section{Proofs of the main results: upper bounds}\label{sec:mainUB}

In this section we provide the proofs of Part~(a) of Theorems~\ref{thm 1} and~\ref{thm 6} in Section~\ref{sec:thms16}, and the proof of Theorem~\ref{thm 7} in Section~\ref{sec:thm7}.

\subsection{Proof of Part~(a) in Theorems~\ref{thm 1} and~\ref{thm 6}}\label{sec:thms16}

Fix $d\in [n-1]$ and $p\in [0,1]$. Write $S=S_n(d)$ and let $X$ be the number of vertices in $V\setminus S$ that do not remain isolated in $\mathbf{G}(\cD_n)_p$. Then,
\begin{align*}
    \bE[X]= \sum_{v\in V\setminus S} (1-(1-p)^{d(v)}) = N_1(d,p).
\end{align*}
Observe that Part~(a) of Theorem~\ref{thm 1} is implied by Part~(a) of Theorem~\ref{thm 6} since $N_1(d,p)=o(n)$ for $p\ll 1/d$. Therefore, it suffices to prove the latter. 
In fact, we will show that a.a.s.\ the number of non-isolated vertices is dominated by the upper bound in Part~(a) of Theorem~\ref{thm 6}. To do so, it suffices to show that typically $X - \mathbb E[X] = o(\mathbb E[X])$ conditionally on any realization of $G\in \cG(\cD_n)$.
 
Given $G$, $X$ is a function $f$ of the independent random variables that mark whether an edge has percolated or not. The function $f$ is $2$-Lipschitz (changing the state of an edge may only change the fact that its endvertices are isolated or not) and $1$-certifiable (the fact that there are $s\ge 1$ non-isolated vertices can be witnessed by $s$ edges). 
Let $\omega=\omega(n)$ be a function that tends to infinity sufficiently slowly. By Talagrand's inequality (Lemma~\ref{thm:McD2}) with $t=\sqrt{\omega \bE[X]}\ge 120\sqrt{\bE[X]}$, we obtain
\begin{align}\label{eq:XLB}
\Pr(X > N_1(d,p)+2\sqrt{\omega N_1(d,p)})\leq \Pr(|X- \bE[X]| >2\sqrt{\omega\bE[X]}])\leq 4\exp\left(-\frac{\omega}{32}\right)=o(1).
\end{align}
Since this bound holds for any choice of $G\in \cG(\cD_n)$, it also holds unconditionally. 

Note that if $N_1(d,p)\to \infty$ as $n\to \infty$, then~\eqref{eq:XLB} implies that $X\le (1+o(1)) N_1(d,p)$ a.a.s., whilst if $N_1(d,p) = O(1) = o(|S|)$, Markov's inequality implies that $X = o(|S|)$ as well. 
Since there are at most $|S|$ vertices of degree at least $d$, a.a.s.\ the total number of vertices in the largest component of $\mathbf{G}(\cD_n)_p$ is at most 
\begin{align*}
    |S|+X\leq (1+o(1))(|S|+N_1(d,p)),
\end{align*}
thus proving Theorem~\ref{thm 6}~(a).

\subsection{Proof of Theorem~\ref{thm 7}}\label{sec:thm7}

Let $S=S_n(d)$ and recall the assumptions $pd\ll 1\ll pd(S)$. Denote by $\cC$ the union of the components of $G_p$ that intersect $S$. We will show that a.a.s.\
\begin{enumerate}
    \item $|V(\cC)|\leq |S|+(1+2\omega^{-1/3})p d(S)$, and
    \item $L_1(G_p)\leq \max\left\{|V(\cC)|, 2\tfrac{\log n}{\log \omega}\right\}$.
\end{enumerate}
Let $H=G[V\setminus S]$ be the subgraph induced by the vertices of degree less than $d$. For any $v\notin S$, let $\cH(v)$ be the component of $v$ in $H_p$. 
\begin{lemma}\label{lem:BP}
For any $v\notin S$, we have $\bE[|\cH(v)|]\leq 1+2\omega^{-1}$. Moreover, for all $\mu\ge 0$,
\begin{align}\label{eq:proba}
    \bP\Big(|\cH(v)|\geq \mu\frac{\log n}{\log \omega}\Big) =o(n^{-(1+o(1))\mu}).
\end{align}
In particular, a.a.s.\ $L_1(H_p)\leq 2\frac{\log n}{\log \omega}$.
\end{lemma}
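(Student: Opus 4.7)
The plan is to couple the exploration of $\cH(v)$ in $H_p$ with a subcritical Galton--Watson branching process, using the coupling for the first moment and a direct subtree enumeration for the tail. Since $v\notin S_n(d)$ and every vertex reachable from $v$ in $H$ has $G$-degree strictly less than $d$, the maximum degree of $H$ is at most $d-1$. Hence a breadth-first exploration of $\cH(v)$ is stochastically dominated by a Galton--Watson tree with offspring distribution $\mathrm{Bin}(d,p)$. By the very definition of $\omega$ we have $\omega\le(dp)^{-1}$, so $pd\le\omega^{-1}=o(1)$ and the dominating tree is deeply subcritical.

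For the expectation bound, the expected total size of a Galton--Watson tree with offspring mean $pd$ equals $1/(1-pd)$; for $\omega$ large enough this is at most $1+pd/(1-pd)\le 1+2\omega^{-1}$, giving $\bE[|\cH(v)|]\le 1+2\omega^{-1}$.

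For the tail, I would switch from the branching process to a direct first moment over potential spanning subtrees. If $|\cH(v)|\ge k$ then $H_p$ contains a tree on $k$ vertices rooted at $v$ whose $k-1$ edges are all blue. Using the classical estimate that a graph of maximum degree at most $d$ contains at most $(ed)^{k-1}$ subtrees of size $k$ through a fixed vertex, and that each such subtree is entirely blue with probability $p^{k-1}$,
\begin{equation*}
\bP\bigl(|\cH(v)|\ge k\bigr) \;\le\; (edp)^{k-1} \;\le\; (e/\omega)^{k-1}.
\end{equation*}
Substituting $k=\mu\log n/\log\omega$ and using $\log\omega\to\infty$, a short manipulation yields $(e/\omega)^{k-1}=n^{-\mu(1+o(1))}$, which is the stated tail. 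The ``in particular'' statement then follows from this with $\mu=2$ together with a union bound over the $n$ vertices of $H$.

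The main (minor) obstacle I foresee lies in the regime where $\omega$ is itself polynomially large in $n$: there the formula $2\log n/\log\omega$ collapses to a small constant and the multiplicative factor $e^{k-1}\cdot\omega$ arising in the tail estimate is no longer negligible, so the above union bound becomes tight or fails. In that regime one can instead fall back on a direct first moment computation: the expected number of blue connected configurations of $H$ on $\lceil 2\log n/\log\omega\rceil$ vertices is $o(1)$ (because the expected number of blue edges in $H$ is already small), which suffices to conclude $L_1(H_p)\le 2\log n/\log\omega$ a.a.s.\ in this corner case as well.
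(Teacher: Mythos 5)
Your proposal follows the same overall strategy as the paper: couple a breadth-first exploration of $\cH(v)$ with a Bienaym\'e--Galton--Watson tree with $\mathrm{Bin}(d,p)$ offspring, then control the expectation and the tail of the total progeny $T$. The expectation bound is fine (you use the closed form $1/(1-pd)$, the paper sums the geometric series $\sum_k \omega^{-k}$; same thing). For the tail, you depart from the paper: instead of invoking a large-deviation inequality for the total progeny (the paper uses Corollary~2.20 of~\cite{vdH17}, giving $\bP(T\ge t)\le e^{-tI_{1/\omega}}$ with $I_{1/\omega}=(1+o(1))\log\omega$), you bound $\bP(|\cH(v)|\ge k)$ by a first-moment count over $k$-vertex blue subtrees rooted at $v$, obtaining $(edp)^{k-1}\le(e/\omega)^{k-1}$. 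Unwinding, $(e/\omega)^{k-1}=e^{-(k-1)(\log\omega-1)}$ while the paper's bound is $e^{-k(\log\omega-1+1/\omega)}$; the two differ essentially by a factor of $\omega/e$, which is exactly the loss you flag.

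That loss is a genuine gap in your version of the tail estimate, not a cosmetic one. When $\log\omega=\Theta(\log n)$, the threshold $k=\mu\log n/\log\omega$ is bounded, so $(e/\omega)^{k-1}\approx\omega\,n^{-\mu(1+o(1))}$, which is not $o(n^{-(1+o(1))\mu})$; you cannot hide a polynomial factor of $\omega$ in the $n^{o(1)}$ slack. Unfortunately your fallback does not repair this: the expected number of $k$-vertex blue subtrees in $H$ is at most $n(edp)^{k-1}$, and this carries the very same $\omega$ factor, so it is not $o(1)$ in the problematic regime. Moreover the fallback only speaks to the ``in particular'' conclusion, whereas the quantitative tail bound~\eqref{eq:proba} is invoked with $\mu$ growing with $n$ in the proof of Claim~\ref{claim X}, so you cannot simply replace it by a weaker statement. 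To close the gap you would either need to argue that the regime $\log\omega=\Theta(\log n)$ is vacuous (or makes the conclusion trivial) under the hypotheses actually in force when the lemma is used, or upgrade the subtree count to the sharper total-progeny large-deviation bound that the paper cites, which saves the extra $\log\omega$ in the exponent. The first moment over subtrees is a perfectly reasonable substitute for the LDP in the main regime, but as written it does not prove the lemma in the generality stated.
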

\begin{proof}
We consider a breadth first search (BFS) exploration process of $\cH(v)$ starting at $v$.\footnote{The formal description of the exploration process goes as follows: first, assign label 0 to the vertex $v$. Then, as a first step, reveal the neighbors of $v$ and assign them labels between 1 and $d(v)$ in an arbitrary order. Suppose that just before step $i$ one has revealed vertices with labels between 0 and $\ell_i$. If $\ell_i < i$, then finish the exploration process. Otherwise, reveal all neighbors of $i$, and if exactly $r_i$ of them have not been labeled before, assign labels $\ell_i+1,\dots,\ell_i+r_i$ to these new vertices in an arbitrary order.} Since $d(v)\leq d$, we can couple the above BFS process with a Bienaym\'e-Galton-Watson tree with offspring distribution $\mathrm{Bin}(d,p)$ in such a way that the latter stochastically dominates the former. For every $k\ge 0$, we denote by $X_k$ the size of the $k$-th generation of the Bienaym\'e-Galton-Watson tree and let $T=\sum_{k\geq 0} X_k$ be the total progeny of the tree. Then, $|\cH(v)|\le |T|$ holds on the probability space where $\cH(v)$ and $T$ have been coupled.

By the choice of $\omega$, we have $\bE[X_k]\le (dp)^k\leq \omega^{-k}$ for all $k\in\mathbb{N}$. It follows that 
\begin{align*}
    \bE[|\cH(v)|]\leq \sum_{k\geq 0} \bE[X_k]\leq 1+2\omega^{-1}.
\end{align*}
In order to bound the probability that $\cH(v)$ is large, we will use classical results for branching processes with binomial offspring. 
Let $I_\lambda:=\lambda-1-\log \lambda$ denote the large deviation function for Poisson random variables with mean $\lambda$ and note that $I_{1/\omega}= (1+o(1))\log \omega$. 
Since $dp\leq 1/\omega$, by Corollary 2.20 in~\cite{vdH17} we deduce that for any $t\ge 0$,
\begin{align*}
    \bP(|\cH(v)|\ge t)\leq \bP(T\ge t)\leq e^{-t I_{1/\omega}}.
\end{align*}
Choosing $t = \mu\frac{\log n}{\log \omega}$ finishes the proof of the first statement. Moreover, by choosing $\mu=2$ and taking a union bound over all at most $n$ vertices, we deduce that a.a.s.\ $L_1(H_p)\le 2\frac{\log n}{\log \omega}$, as desired.
\end{proof}

Let $N_G(S)$ be the set of vertices $v\notin S$ with at least one neighbor in $S$. For $v\in N_G(S)$, let $\mathcal E_p(v, S)$ denote the event that there exists at least one edge between $v$ and $S$ in $G_p$. The number of vertices $v$ for which $\mathcal E_p(v, S)$ holds, is stochastically dominated by $\text{Bin}(d(S),p)$. Thus, $|V(\cC)|$ is stochastically dominated by the sum of $|S|$ and two random variables: $X \sim \text{Bin}(d(S),p)$ and
$$
Y=\sum_{\substack{e\in E(S,V\setminus {S})\\ e=uv}}\mathds{1}_{e\in E(G_p)}(|\cH(v)|-1).
$$
Indeed, $X$ dominates stochastically the number of vertices in $N_G(S)$, and $Y$ counts for the orders of the components of the vertices $v\in N_G(S)$ in $H_p$ without counting $v$ itself (since $v$ is already ``counted'' by $X$). By Lemma~\ref{lem:BP}, $\bE[Y]\leq 2\omega^{-1} \bE[X]\le 2\omega^{-1} p d(S)$, and hence Markov's inequality yields a.a.s.\ $Y\leq \omega^{-1/3} p d(S)$. 
Since $p d(S)\to \infty$, by Chernoff's inequality a.a.s.\ $|X-\mathbb E[X]| = o(\mathbb E[X]^{2/3}) = o(\omega^{-1/3} pd(S))$, and hence a.a.s.
\begin{align*}
    |V(\cC)|\leq |S|+\mathbb E[X]+2\omega^{-1/3} p d(S) = |S| + (1+2\omega^{-1/3}) p d(S).
\end{align*}

By Lemma~\ref{lem:BP} the order of the components in $G_p$ that do not intersect $S$ is a.a.s.\ at most $2\frac{\log n}{\log \omega}$, which concludes the proof of Theorem~\ref{thm 7}.

\section{Proof of the main results: lower bounds}\label{sec:mainLB}

We continue by providing a proof of Part~(b) in Theorems~\ref{thm 1},~\ref{thm 4} and~\ref{thm 6} in Sections~\ref{sec:thms145_b} and~\ref{thm:19b}. We then prove Theorem~\ref{thm 2} and Corollary~\ref{thm 3} in Section~\ref{sec:thm23}.

\subsection{Proof of Part~(b) in Theorems~\ref{thm 1} and~\ref{thm 4}}\label{sec:thms145_b}

First, we show Theorem~\ref{thm 4} assuming Theorem~\ref{thm 1}.

\begin{proof}[Proof of Theorem~\ref{thm 4} assuming Theorem~\ref{thm 1}]
If $|S_n(\omega^{-1} d_2)| = o(n)$, then by Theorem~\ref{thm 1}~(a) with $d = d_2/\omega$ and $p = p_2\ll 1/d$ we get that $L_1(\mathbf{G}(\cD_n)_{p_2}) = o(n)$ a.a.s. If not, suppose for contradiction that there is $\eps > 0$ and an increasing subsequence $(n_i)_{i\ge 1}$ satisfying
\begin{equation}\label{eq:ass_th25}
\mathbb P(L_1(\mathbf{G}(\cD_{n_i})_{p_2}) - L_1(\mathbf{G}(\cD_{n_i})_{p_1})\ge \eps n_i)\ge \eps.
\end{equation}
By assumption, there is $\delta > 0$ for which one may further extract an increasing subsequence $(n'_i)_{i\ge 1}$ of $(n_i)_{i\ge 1}$ such that $|S_{n'_i}(\omega^{-1} d_2)| = \delta n'_i + o(n_i)$. Hence, by Theorem~\ref{thm 1}~(a) with $d = \omega^{-1} d_2$ and $p = p_2\ll 1/d$ we get that $\mathbb P(L_1(\mathbf{G}(\cD_{n'_i})_{p_2})\le \delta n'_i + o(n'_i)) = 1-o(1)$, and by Theorem~\ref{thm 1}~(b) with $d = \omega d_1$ and $p = p_1\gg 1/d$ we get that $\mathbb P(L_1(\mathbf{G}(\cD_{n'_i})_{p_1})\ge \delta n'_i - o(n'_i)) = 1-o(1)$. Combining the above two applications of Theorem~\ref{thm 1} leads to a contradiction of our assumption of~\eqref{eq:ass_th25}.
\end{proof}

Now, let $dp = \omega^3$ for some function $\omega = \omega(n)\to \infty$. 
By Lemma~\ref{lem:avg to min} we know that $\mathbf{G}(\cD_n)$ contains a graph $\mathbf{H}$ with minimum degree at least $d' := d/\omega$ which contains almost all vertices of $|S_n(d)|$ and is also uniformly distributed on its degree sequence by Remark~\ref{rem:unifH}.

To the end of the section, we show that a.a.s. the graph $\mathbf{H}_p$ contains a component of order $(1-o(1))h$ where $h := |V(\mathbf{H})|$. 
To this end, we show that $\mathbf{H}$ has good expansion properties and all cuts $(V_1, V_2)$ in $\mathbf{H}_p$ with $|V_1|, |V_2|\ge h/\omega$ contain at least one edge, which clearly implies Part~(b) in Theorem~\ref{thm 1}. 
To do so, we show that a.a.s.\ every such cut in $H$ contains many edges and at least one of these edges is retained in $\mathbf{H}_p$.

\begin{lemma}\label{lem:cutH}
For every $s\in [h/\omega, h/2]$, with probability at least $\exp(-\Omega(d's))$, every cut $(V_1, V_2)$ in $\mathbf{H}$ with $|V_1| = s$ contains at least $d's(h-s)/(200h)$ edges. 
\end{lemma}
\begin{proof}
The proof relies on the first moment method. 
Fix a cut $(V_1, V_2)$ of $\mathbf{H}$ with $|V_1|=s$. Let $k_0:=d's(h-s)/(100h)-2$. For any $k\leq k_0$, denote by $\mathcal F_k$ the event $e(V_1,V_2) = k$. 
The number of switchings from $\mathcal F_{k+2}$ to $\mathcal F_k$ is clearly at most $2\cdot (k+2)(k+1)/2$.

Let us bound from below the number of switchings from $\mathcal F_k$ to $\mathcal F_{k+2}$. 
On the one hand, the subset of $V_2$ consisting of the vertices with at least $s/3$ neighbors in $V_1$ has size at most $3k/s < d'/30$. 
Since each such vertex has degree at most $h-s$ inside $V_2$, the graph induced by its complement (which we denote by $W_2\subseteq V_2$) contains at least
$d'(h-s)/2 - d'(h-s)/30 -k > d'(h-s)/3$ edges. 
At the same time, since the cut has $k$ edges, there are at most $(h-s)/25$ vertices $v\in V_2$ satisfying $e(\{v\}, V_1)\ge 25k/(h-s)$, or alternatively, at least $24(h-s)/25$ vertices in $V_2$ have at most $25k/(h-s) < d's/(4h)$ neighbors in $V_1$. Call this set $U_2$ and note that $U_2\subseteq W_2$ since $d's/(4h) < s/3$. 

Fix one edge $uw$ with $u\in U_2$, $w\in W_2$.
The number of switchings including $uw$ and increasing the cut is at least the number of edges with one endvertex in $V_1\setminus (N(u)\cup N(w))$ and one endvertex in  
$V_1\setminus (N(u)\cap N(w))$.
On the one hand, $|N(u)\cap N(w)\cap V_1|\le |N(u)\cap V_1|\le d's/(4h)$, so there are at most $d's^2/(4h) \le d's/8$ 
edges in $\mathbf{H}[V_1]$ incident to $N(u)\cap N(w)\cap V_1$.
Hence, the number of edges in $\mathbf{H}[V_1]$ incident to a vertex in $V_1\setminus (N(u)\cup N(w))$ but not to $N(u)\cap N(w)\cap V_1$ is at least 
\[\frac{d'(|V_1|-|N(u)\cup N(w)|)}{2} - k - \frac{d's}{8} \ge \frac{d'(s - 2s/3)}{2} - \frac{d's}{100} - \frac{d's}{8} \ge \frac{d's}{50}.\]
Since the number of edges between $U_2$ and $W_2$ is at least 
$d'|U_2|/2 - (h-s)d'/30 - k > 10 d'(h-s)/25$, we get that 
\[\frac{|\cF_{k}|}{|\cF_{k+2}|}\le \frac{(k+2)^2}{(d's/50)\cdot (10 d'(h-s)/25)}\le \frac{s(h-s)}{80h^2}\le \frac{1}{80}.\]
As a result, $|\cF_{(k_0+2)/2}|/|\cF_{k_0}|\le 80^{-k_0/4}$. Since the number of cuts is at most $2^h$, using that $h-s\ge s \ge h/\omega$ and $d'\ge \omega^2$ (so $k_0 = \Omega(d's)\gg h$) finishes the proof of the lemma.
\end{proof}

We are ready to finish the proof of Theorem~\ref{thm 1}.

\begin{proof}[Proof of Part~(b) in Theorem~\ref{thm 1}]
We condition on the event from Lemma~\ref{lem:cutH}. 
Then, by a union bound, there is a cut $(V_1, V_2)$ in $\mathbf{H}$ with $|V_1| = s\in [h/\omega, h/2]$ which is empty in $\mathbf{H}_p$ with probability at most
\[\sum_{s=h/\omega}^{h/2} \binom{h}{s} (1-p)^{d's(h-s)/(200h)} = h\cdot 2^h\cdot \exp(-\Omega(pd'h/\omega)) = \exp(-\Omega(pd'h/\omega)),\]
where we used that $\tbinom{h}{s}\le 2^h$ and $h = o(pd's)$ for every $s\in [h/\omega, h/2]$.
Combining this with the error bound from Lemma~\ref{lem:cutH} we get that, with probability $1-\exp(-\Omega(pd'h/\omega)) = 1-\exp(-\Omega(\omega n))$, the graph $\mathbf{H}_p$ contains a connected component of order $(1-1/\omega)h$ and almost all vertices of $S_n(d)$ are present in this component.
\end{proof}

\subsection{Proof of Part~(b) in Theorem~\ref{thm 6}}\label{thm:19b}

We are ready to provide the proof of Part~(b) in Theorem~\ref{thm 6} without further preparation.

\begin{proof}[Proof of Part~(b) in Theorem~\ref{thm 6}]
Fix $pd = \omega^3$ for some function $\omega(n)\to \infty$, and set $d' = d/\omega$. 
Again, we construct a subgraph $\mathbf{H}$ with $h = |V(\mathbf{H})|$ from $\mathbf{G}(\cD_n)$ by iteratively deleting the vertices with degree less than $d'$.

Otherwise said, $\mathbf{H}$ contains $(1-o(1))$-proportion of all edges in $\mathbf{G}(\cD_n)$.
Moreover, since $d'p\gg 1$ and $\mathbf{H}$ has minimum degree $d'$, we conclude as in the proof of Part~(b) in Theorem~\ref{thm 1} that a.a.s.\ all but at most $O(\omega^{-1}h)$ vertices are in the same connected component of $\mathbf{H}_p$.
Denote the set of vertices that do not participate in this component by $U$.

Now, by Lemma~\ref{lem dom} (applied with $\mathbf{H}$ instead of $\mathbf{G}(\cD_n)$ and $p$ instead of $q$) implies that the expected average degree (in $\mathbf{H}_p$) of the vertices in $U$ is smaller than the expected average degree of $\mathbf{H}_p$.
Hence, the expected number of edges in $\mathbf{H}_p$ that are incident to a vertex in $U$ is at most $\mathbb E[2|E(\mathbf{H}_p)|\cdot |U|/h] = o(mp)$, and an application of Markov's inequality finishes the proof.
\end{proof}

\subsection{Proof of Theorem~\ref{thm 2} and Corollary~\ref{thm 3}}\label{sec:thm23}

Recall the statements of Theorem~\ref{thm 2} and Corollary~\ref{thm 3}. We note that $d_1 = o(n)$ since $S_n(d)\neq \emptyset$~and $d\gg d_1$. Moreover, we may (and do) assume that $d_1=O(d_2)$: for Corollary~\ref{thm 3}, this assumption obviously holds. For Theorem~\ref{thm 2}, otherwise, let $d_3$ ($d_4$, respectively) be the smallest integer satisfying $|S(d_2)\setminus S(d_3)|\geq \delta n/3$ ($|S(d_2)\setminus S(d_4)|\geq 2\delta n/3$, respectively). In particular, $d_2\le d_3\le d_4\le d_1$. Then, if either $d_3/d_2$, $d_4/d_3$, or $d_1/d_4$, is of order $O(1)$, we may prove the theorem with $d_2$ and $d_3$, with $d_3-1$ and $d_4$, or with $d_4-1$ and $d_1$ respectively, instead of $d_1$ and $d_2$. Otherwise, $d_2\ll d_3\ll d_4\ll d_1$, and one may deduce Theorem~\ref{thm 2} with $C = \delta/4$ from a combination of Theorem~\ref{thm 1}~(a), applied with $d = d_4$ and $p = 1/d_1\ll 1/d_4$, and of Theorem~\ref{thm 1}~(b), applied with $d = d_3$ and $p = 1/d_2\gg 1/d_3$.

To prove Theorem~\ref{thm 2}, we will perform a 2-stage exposure of the edges of $\mathbf{G}_2=\mathbf{G}(\cD_n)_{p_2}$. 
First, we percolate the edges with probability $p_1$, thereby obtaining $\mathbf{G}_1=\mathbf{G}(\cD_n)_{p_1}$. 
It is useful to think about $\mathbf{G}_1$ as a randomly colored copy of $\mathbf{G}(\cD_n)$, where we assign blue to the edges that percolated and red to the other ones. 
We then give another chance to each red edge of $\mathbf{G}_1$ to appear with probability $p=\frac{p_2-p_1}{1-p_1}$. 
An edge that failed to percolate in the first stage but did percolate in the second one updates its color to green. 
Then, $\mathbf{G}_2$ is the subgraph containing all blue and green edges of the randomly colored $\mathbf{G}(\cD_n)$.

We will show that $\mathbf{G}_1$ has a largest (blue) component, called $\cC_1$, of linear order and that the set of vertices, incident to ``a lot of'' red edges and no blue edges, is of linear size. Then, we will show that, after the second percolation stage, a constant fraction of these vertices attach to $\cC_1$ via green paths of length at most $2$.

For $i\in \{1,2\}$ write $S_i=S_n(d_i)$. Denote also $S_{iso}=\{v\in S_2\setminus S_1: d_{p_1}(v) = 0\}$, that is, the (random) set of vertices of degrees between $d_2$ and $d_1$ that are isolated in $\mathbf{G}_1$. Theorem~\ref{thm 1}~(b) implies that there exists a component in $\mathbf{G}_1$ of order at least $\delta' n - o(n)$ a.a.s. Therefore, the largest component $\cC_1$ of $\mathbf{G}_1$ has order at least $\delta' n - o(n)$.

For $v\in S_{iso}$, we split the neighborhood of $v$ in $\mathbf{G}(\cD_n)$ into 3 sets:
\begin{itemize}
    \item $N^1(v)$ (type 1): neighbors of $v$ in $V(\cC_1)$;
    \item $N^2(v)$ (type 2): neighbors of $v$ not in $V(\cC_1)$ which have at least $\sqrt{d_2d}$ neighbors in $V(\cC_1)$;
    \item $N^3(v)$ (type 3): neighbors of $v$ not in $V(\cC_1)$ which have less than $\sqrt{d_2d}$ neighbors in $V(\cC_1)$.
\end{itemize}

Let $S^1_{iso}$ be the set of vertices $v\in S_{iso}$ with $|N^1(v)|\geq d_2/3$. Also, let $S^2_{iso}$ be set of vertices $v\in S_{iso}\setminus S_{iso}^1$ with $|N^2(v)|\geq d_2/3$, and let $S^3_{iso}=S_{iso}\setminus (S^1_{iso}\cup S^2_{iso})$, which in particular satisfies $|N^3(v)| \ge d_2/3$ for all $v\in S^3_{iso}$. 

The idea of the proof is to show first that $|S_{iso}|$ is linear in $n$. 
Next, we show that a.a.s.\ a constant proportion of the vertices in $S^1_{iso}\cup S^2_{iso}$ attach to $\cC_1$ using green edges. 
Finally, we prove that $S^3_{iso}$ is small. 
Therefore, the order of the largest component increases by a constant fraction between the first and the second stage, proving both theorems.

For this purpose, define the events
\begin{align*}
    \cA_0 &= \{|S_{iso}|\ge \delta n/8\}, \quad
    \cA_1 = \{|S^1_{iso}|\ge |S_{iso}|/3\},\quad \text{and} \quad
    \cA_2 = \{|S^2_{iso}|\ge |S_{iso}|/3\}.
\end{align*}

Note that these events are measurable with respect to the $\sigma$-algebra generated by $\mathbf{G}_1$ (i.e., the first stage of the percolation).

\begin{lemma}\label{sublem 2.16}
We have $\Pr(\cA_0)=1-o(1)$.
\end{lemma}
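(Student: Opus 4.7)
The plan is to prove concentration of $|S_{iso}|$ around its expectation after conditioning on the graph, exploiting the fact that the set $S_2\setminus S_1$ is deterministic once $\cD_n$ is fixed. First I would observe that $S_2\setminus S_1=S_n(d_2)\setminus S_n(d_1)$ depends only on the degree sequence, so by hypothesis i) of Theorem~\ref{thm 2} we have $|S_2\setminus S_1|\geq \delta n-o(n)$ for every realization of $G(\cD_n)$. For any $v\in S_2\setminus S_1$ we have $d_2\le d(v)<d_1$, and conditionally on any $G\in\cG(\cD_n)$ the event $\{v\in S_{iso}\}$ is precisely the event that none of the $d(v)$ edges of $G$ incident to $v$ percolates in the first stage. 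Since $p_1=1/d_1$, this event has probability
$$
(1-p_1)^{d(v)}\geq (1-1/d_1)^{d_1}=e^{-1}(1+o(1)),
$$
so $\bE[|S_{iso}|\mid G(\cD_n)=G]\geq (e^{-1}-o(1))(\delta-o(1))n\geq \delta n/4$ for $n$ large, uniformly in $G$.

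Next I would establish concentration via Talagrand's inequality (Lemma~\ref{thm:McD2}), conditionally on $G(\cD_n)=G$. View $|S_{iso}|$ as a function $f$ of the independent Bernoulli variables $(X_e)_{e\in E(G)}$ marking whether each edge of $G$ percolates in the first stage. Flipping the state of a single edge $e=uv$ can only affect the isolation status of $u$ and $v$, so $f$ is $2$-Lipschitz. It is also $d_1$-certifiable: certifying $f\geq s$ amounts to naming $s$ vertices of $S_2\setminus S_1$ together with the state ``not percolated'' of each of their fewer than $d_1$ incident edges, using at most $s\cdot d_1$ coordinates.

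Applying Lemma~\ref{thm:McD2} with $t=\bE[|S_{iso}|]/8$ and using that $d_1=o(n)$ (as noted at the start of the subsection) one gets both $120\sqrt{d_1\,\bE[|S_{iso}|]}=o(\bE[|S_{iso}|])$ and $\bE[|S_{iso}|]/d_1\to\infty$, so
$$
\Pr\!\left(|S_{iso}|\le \tfrac{1}{2}\bE[|S_{iso}|]\,\bigg|\, G(\cD_n)=G\right)\le 4\exp\!\left(-\Omega\!\left(\tfrac{n}{d_1}\right)\right)=o(1).
$$
Since the bound is uniform in $G$, it holds unconditionally, and combined with the lower bound $\bE[|S_{iso}|\mid G(\cD_n)=G]\ge \delta n/4$ this yields $|S_{iso}|\ge \delta n/8$ with probability $1-o(1)$.

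The only delicate point I anticipate is ensuring that Talagrand's certification parameter $r=d_1$ is compatible with $\bE[|S_{iso}|]=\Theta(n)$; this is exactly where the observation $d_1=o(n)$ (which the paper established at the very beginning of the proof of Theorem~\ref{thm 2} by combining $d\gg d_1$ with $S_n(d)\ne\emptyset$) is used. No appeal to the randomness of $G(\cD_n)$ beyond the fixed degree sequence is needed, which is why conditioning on $G$ is clean.
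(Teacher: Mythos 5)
Your proof is correct, and it reaches the same conclusion via a genuinely different concentration tool. The paper conditions on $G$, observes that the events $A_v = \{v \text{ isolated in } G_{p_1}\}$ are \emph{increasing} with respect to the complementary (``red'') percolation, bounds the pairwise interaction term $\Delta = \sum_{uv\in E(G)} \bE[X_uX_v] \le d_1 n (1-1/d_1)^{2d_2-1} = o(n^2)$ using $d_1 = o(n)$, and then invokes Janson's lower-tail inequality (Lemma~\ref{janson}). You instead invoke Talagrand's inequality (Lemma~\ref{thm:McD2}), encoding the same dependency structure via Lipschitzness ($c=2$) and certifiability ($r = d_1$, since isolation of a vertex $v\in S_2\setminus S_1$ is witnessed by the ``not percolated'' state of its fewer than $d_1$ incident edges). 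Both routes work, and both rely crucially on $d_1 = o(n)$: the paper needs it so that $\Delta = o(n^2)$ and the Janson exponent diverges, while you need it so that the certifiability parameter $d_1$ does not swallow the deviation $t = \Theta(n)$ and so that $120\sqrt{d_1\bE[X]} = o(\bE[X])$. In this sense Janson is the more natural tool here (the monotonicity is already in hand and a one-sided lower-tail bound is all that is needed), whereas Talagrand is somewhat heavier machinery but handled correctly. One small imprecision: you write $(1-1/d_1)^{d_1} = e^{-1}(1+o(1))$, which requires $d_1\to\infty$; Theorem~\ref{thm 2} does not guarantee this (only $d \gg d_1$ and $d_1 \ge d_2 + 1 \ge 2$). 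The paper instead uses the uniformly valid bound $(1-1/d_1)^{d_1} \ge 1/4$ for $d_1\ge 2$, which is what you should use; your final conclusion $\bE[|S_{iso}|\mid G] \ge \delta n/4$ is unaffected.
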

\begin{proof}
Let $G\in \cG(\cD_n)$. We will bound $\Pr(\cA_0\mid \mathbf{G}(\cD_n)=G)$ uniformly for all $G$ using Janson's inequality (Lemma~\ref{janson}). 
For every $v\in V(G)$, we define the event $A_v=\{d_{p_1}(v) = 0\}$ and the random variables $X_v= \mathds 1_{A_v}$ and $X=\sum_{v\in S_2\setminus S_1} X_v$. 
Note that the event $A_v$ is increasing with respect to the red percolation process.

Independently of the choice of $G$, if $v\in S_2\setminus S_1$, then (using that $d_1\ge d_2+1\ge 2$)
\begin{equation}
    \Pr(A_v)\geq \left(1-\dfrac{1}{d_1}\right)^{d_1}\ge \frac{1}{4},
\end{equation}
and, by hypothesis, $\mu=\mathbb{E}[X]\geq (1-o(1))\frac{\delta n}{4}$.

For any two vertices $u,v\in V(G)$, the random variables $X_u$ and $X_v$ are not independent if and only if $uv\in E(G)$. Thus, since $d_1 = o(n)$, we have 
\begin{align*}
    \Delta 
    &
    =
    \sum_{\substack{u,v\in S_2\setminus S_1;\\ uv\in E(G)}} \bE\left[ X_uX_v\right]
    \le
    d_1n = o(n^2).
\end{align*}
%since $d_2<d_1=o(n)$. 

By Lemma~\ref{janson} applied with $t=\mu - \delta n/8 = (1-o(1))\mu/2$, we conclude that
\begin{equation*}
\mathbb P\left(X\le \dfrac{\delta n}{8}\right) 
\le \exp{\left(-\frac{(1-o(1))\mu^2}{8(\mu+\Delta)}\right)} = o(1),
\end{equation*}
which proves the lemma.
\end{proof}

\begin{lemma}\label{trivial ob 2}
There exists $c_1>0$ such that
\begin{equation*}
   \Pr\left(L_1(\mathbf{G}_2) - L_1(\mathbf{G}_1)\ge c_1 \delta n\;\bigg|\; \cA_0\cap \cA_1\right) = 1 - o(1).
\end{equation*}
\end{lemma}
\begin{proof}
Let us condition on a realization of the first stage of the percolation process that satisfies $\cA_0 \cap \cA_1$. We will prove the lemma in this conditional space. The two events imply that $|S^1_{iso}|\geq \delta n/24$. For every $v\in S^1_{iso}$, define the event $B_v$ as the event that at least one red edge in $\mathbf{G}_1$ between $v$ and $V(\cC_1)$ percolates (i.e., becomes green) at the second stage. Since $p = \frac{p_2-p_1}{1-p_1}$ is a decreasing function of $p_1$ and $p_1\le p_2/c$, we have $p\ge \frac{(1-1/c)p_2}{1- p_2/c}\ge \frac{c-1}{cd_2}$.

For $v\in S^1_{iso}$,
\begin{equation*}
   \Pr(B_v)= 1 - \left(1-p\right)^{d_2/3} \geq  1 - \left(1-\dfrac{c-1}{cd_2}\right)^{d_2/3}\geq c_0
\end{equation*}
for some constant $c_0>0$.
Conditionally on the first stage, the collection of events $\{B_v:\, v\in S^1_{iso}\}$ is independent since distinct events depend on disjoint sets of edges.
 
By Chernoff's bound (Lemma~\ref{chernoff}), the probability that at most $c_0\delta n/48$ vertices of $S^1_{iso}$ attach to $V(\cC_1)$ in $G_2$ is at most $e^{-c_0\delta n/192}=o(1)$, so the lemma holds with $c_1=c_0/48$.
\end{proof}

\begin{lemma}\label{trivial ob 3}
There exists $c_2>0$ such that
\begin{equation*}
    \Pr\left(L_1(\mathbf{G}_2) - L_1(\mathbf{G}_1)\ge c_2\delta n\;\bigg|\; \cA_0\cap \cA_2\right) = 1-o(1).
\end{equation*}
\end{lemma}
\begin{proof}
Let us condition on a realization of the first stage of the percolation process that satisfies $\cA_0 \cap \cA_2$. We will prove the lemma in this conditional space. The event $\cA_0 \cap \cA_2$ implies that $|S^2_{iso}|\geq \delta n/24$.

Let $T$ be the set of vertices not in $V(\cC_1)$ with at least one neighbor in $S^2_{iso}$ and at least $\sqrt{d_2 d}$ neighbors in $\cC_1$. If $w\in T\cap S_{iso}$, then $w\in S_{iso}^1$ as $\sqrt{d_2d}\geq d_2/3$. 
Similarly to the proof of Lemma~\ref{trivial ob 2}, we may show that a.a.s. at least $c_0\delta n/48$ vertices in $S^2_{iso}$ are connected (by a green edge) to at least one element of $T$ after the second stage. We condition on this event.

Moreover, the probability that $w\in T$ is not connected (by a green edge) to $\cC_1$ after the second stage is at most
\begin{equation*}
    (1-p)^{\sqrt{d_2d}}\le e^{-p\sqrt{d_2 d}}=o(1).
\end{equation*}
since $p=\Omega(1/d_2)$ and $d_2\ll d$. The collection of these events for $w\in T$ is independent, and independent of the existence of green edges between $S^2_{iso}$ and $T$.

We conclude that the expected number of vertices $v\in S^2_{iso}$ such that none of its neighbors in $T$ are adjacent to $\cC_1$ after the second phase is at most $o(|S^2_{iso}|)=o(n)$. By Markov's inequality it follows that a.a.s. there are at least $c_0\delta n/96$ vertices in $S^2_{iso}$ that connect to $\cC_1$ through a green path of length $2$ in $\mathbf{G}_2$, and the lemma holds with $c_2=c_0/96$.
\end{proof}

The next statement follows directly from Lemmas~\ref{sublem 2.16},~\ref{trivial ob 2} and~\ref{trivial ob 3}.
\begin{corollary}\label{trivial cor}
There exists $c_3>0$ such that
\begin{equation*}
    \Pr\left(\{L_1(\mathbf{G}_2) - L_1(\mathbf{G}_1)\geq  c_3\delta n\} \cap  (\cA_1\cup\cA_2)\right)\geq \Pr(\cA_1\cup \cA_2)-o(1). \qedhere
\end{equation*}
\end{corollary}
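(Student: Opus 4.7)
}
The plan is to reduce to the case where the pivotal event $\cA_0$ holds: since by Lemma~\ref{sublem 2.16} the event $\cA_0$ has probability $1-o(1)$, restricting to it costs only an $o(1)$ additive term, and on $\cA_0$ we can invoke Lemmas~\ref{trivial ob 2} and~\ref{trivial ob 3} directly. The work is entirely bookkeeping on events.

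Concretely, I would set $c_3 = \min\{c_1, c_2\}$, where $c_1, c_2$ are the constants supplied by Lemmas~\ref{trivial ob 2} and~\ref{trivial ob 3}, and let $E = \{L_1(G_2) - L_1(G_1) \ge c_3 \delta n\}$. Since $c_3 \le c_i$, the conclusions of both lemmas remain valid with $c_i$ replaced by $c_3$. The inequality we need to establish is equivalent to $\Pr(E^c \cap (\cA_1 \cup \cA_2)) = o(1)$, and we can peel off $\cA_0$ using the set inclusion
$$E^c \cap (\cA_1 \cup \cA_2) \;\subseteq\; (E^c \cap \cA_0 \cap \cA_1) \;\cup\; (E^c \cap \cA_0 \cap \cA_2) \;\cup\; \cA_0^c,$$
so that a union bound reduces the corollary to showing each of the three pieces on the right is $o(1)$.

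For $i\in\{1,2\}$, using the identity $\Pr(E^c\cap \cA_0 \cap \cA_i) = \Pr(E^c \mid \cA_0 \cap \cA_i)\,\Pr(\cA_0 \cap \cA_i)$, Lemmas~\ref{trivial ob 2} and~\ref{trivial ob 3} give $\Pr(E^c \mid \cA_0 \cap \cA_i) = o(1)$, hence $\Pr(E^c \cap \cA_0 \cap \cA_i) = o(1)$ (and the bound is trivially $0$ in the degenerate case $\Pr(\cA_0\cap \cA_i)=0$). The remaining summand $\Pr(\cA_0^c) = o(1)$ is Lemma~\ref{sublem 2.16}. Adding these bounds yields the corollary. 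I do not foresee any genuine obstacle here: the statement is a purely formal consequence of the three preceding lemmas, combining a uniform choice of constant with standard manipulations of conditional probabilities and unions of events.
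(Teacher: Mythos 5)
Your proof is correct and follows essentially the same route as the paper: same choice of constant $c_3=\min\{c_1,c_2\}$, same reliance on Lemmas~\ref{sublem 2.16},~\ref{trivial ob 2} and~\ref{trivial ob 3}, and the same idea of folding in $\cA_0$ at the cost of an $o(1)$ term. The only difference is cosmetic --- the paper expands $\Pr(\cA\cap(\cA_1\cup\cA_2))$ directly via inclusion--exclusion, while you bound the complement $\Pr(E^c\cap(\cA_1\cup\cA_2))$ by a union bound over $\cA_0^c$ and the two events $E^c\cap\cA_0\cap\cA_i$ --- but these are two bookkeeping arrangements of the identical argument.
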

\begin{proof}
Let $c_3=\min\{c_1,c_2\}$, where $c_1$ and $c_2$ are the constants appearing in Lemmas~\ref{trivial ob 2} and~\ref{trivial ob 3}, respectively. Write $\cA = \{L_1(\mathbf{G}_2) - L_1(\mathbf{G}_1)\geq  c_3\delta n\}$. Then, by Lemmas~\ref{sublem 2.16},~\ref{trivial ob 2} and~\ref{trivial ob 3},
\begin{align*}
    \Pr(\cA\cap(\cA_1\cup \cA_2))
    &\geq \Pr(\cA\cap \cA_0\cap( \cA_1\cup \cA_2))\\
    &=  \Pr((\cA\cap \cA_0\cap \cA_1)\cup(\cA\cap \cA_0\cap \cA_2)) \\
    &=\Pr(\cA\cap \cA_0\cap \cA_1)+\Pr(\cA\cap \cA_0\cap \cA_2) - \Pr(\cA\cap \cA_0\cap \cA_1\cap \cA_2) \\
    &\ge(1-o(1))\Pr(\cA_0\cap \cA_1)+(1-o(1))\Pr(\cA_0\cap \cA_2) - \Pr(\cA_0\cap \cA_1\cap \cA_2)\\
    &=\Pr(\cA_0\cap \cA_1)+\Pr(\cA_0\cap \cA_2) - \Pr((\cA_0\cap\cA_1)\cap (\cA_0\cap\cA_2)) - o(1)\\
    &=\Pr((\cA_0\cap\cA_1)\cup (\cA_0\cap\cA_2)) - o(1)\\
    &=\Pr(\cA_0\cap(\cA_1\cup \cA_2)) - o(1) = \Pr(\cA_1\cup \cA_2) - o(1),
\end{align*}
which proves the corollary.
\end{proof}

In the remainder of the section, we show that $\Pr(\cA_1\cup \cA_2)=1-o(1)$. Recall that $m$ denotes the sum of the degrees of all vertices in $\mathbf{G}(\cD_n)$.

\begin{lemma}\label{cor dom}
Let $\cC_1$ be the largest (blue) component of $\mathbf{G}_1$ after the first percolation stage. Then, a.a.s. all but $o(m)$ (blue, green or red) edges have two endvertices in $V(\cC_1)$.
\end{lemma}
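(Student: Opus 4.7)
The goal is to show $d(V\setminus V(\cC_1))=o(m)$ a.a.s., which upper-bounds the number of edges of $G(\cD_n)$ not contained in $\cC_1$. The plan is to split $V\setminus V(\cC_1)$ via a degree threshold $d^* = \omega d_1$, where $\omega=\omega(n)\to\infty$ slowly enough that $d^*\ll d$ and $d^* n = o(m)$; this is possible because the hypothesis $|S_n(d)|\ge \delta' n - o(n)$ together with $d\gg d_1$ forces $\bd/d_1\to\infty$.

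The low-degree contribution is then immediate: $d(V\setminus S_n(d^*))\le d^* n = o(m)$. For the high-degree part $S_n(d^*)$, we may apply Proposition~\ref{prop:gen} with $S = S_n(d^*)$ and $p=p_1$ (using that $p_1 d^* = \omega\to\infty$ and $d(S_n(d^*)) \ge m-o(m) \gg d(V\setminus S_n(d^*))$, which follows from the previous step). This yields, as in the ``moreover'' clause of Theorem~\ref{thm 1}~b), that a.a.s.\ $|S_n(d^*)\cap V(\cC_1)| = (1-o(1))|S_n(d^*)|$, and hence $\bE[|S_n(d^*)\setminus V(\cC_1)|] = o(|S_n(d^*)|)$.

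The crux is to upgrade this vertex-count bound to a degree-sum bound $\bE[d(S_n(d^*)\setminus V(\cC_1))]=o(m)$. By exchangeability, $q_v := \Pr(v\notin V(\cC_1))$ depends only on $d(v)$, and by Lemma~\ref{lem dom} the law of $|V(\cC_v)|$ is stochastically increasing in $d(v)$. Granting that this transfers to $q_u \ge q_v$ whenever $d(u)\le d(v)$, Chebyshev's sum inequality applied to the oppositely-sorted sequences $(d(v))_{v\in S_n(d^*)}$ and $(q_v)_{v\in S_n(d^*)}$ yields
$$
\sum_{v\in S_n(d^*)} d(v)\,q_v \;\le\; \frac{d(S_n(d^*))\cdot \bE[|S_n(d^*)\setminus V(\cC_1)|]}{|S_n(d^*)|}\;=\;o(d(S_n(d^*)))\;=\;o(m),
$$
and Markov's inequality then converts this to the a.a.s.\ conclusion. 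The main obstacle is precisely this promotion of stochastic domination of $|V(\cC_v)|$ to monotonicity of $q_v$: the event $\{v\in V(\cC_1)\}$ is not a monotone function of $|V(\cC_v)|$ alone, since $v$ could lie in a large but non-largest component. A cleaner alternative that avoids the monotonicity step applies the quantitative second part of Proposition~\ref{prop:gen} at a geometric ladder of thresholds $d_k^*:=d^* 2^k$ for $k=0,\dots,O(\log n)$, takes a union bound, and telescopes using $d(S_n(d^*))\asymp \sum_k d_k^*\,|S_n(d_k^*)|$.
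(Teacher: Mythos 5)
Your main line coincides with the paper's own proof. Both pick a degree cutoff $t$ with $d_1\ll t\ll d$ (you use $t=\omega d_1$, the paper uses $t=d/\lambda$), dismiss the $O(tn)=o(m)$ edges meeting a vertex of degree below $t$, apply Proposition~\ref{prop:gen} at $S_n(t)$, and then use monotonicity of $q_v=\Pr(v\notin\cC_1)$ in $d(v)$, via Lemma~\ref{lem dom}, to promote the vertex-count estimate to a degree-weighted one; your Chebyshev-sum step and the paper's auxiliary cutoff $d''$ (the smallest integer with $|S_n(d''+1)|<(1-\lambda^{-1})s'$) are two phrasings of the same promotion. The monotonicity subtlety you flag is not something you introduced: it is exactly the step the paper takes when it asserts that ``by Lemma~\ref{lem dom} one may deduce that for any $v\in S_n(d'')$, the probability that $v$ belongs to $\cC_1$ is at least $1-\lambda^{-1}$.'' You are right that $\{v\in\cC_1\}$ is not a monotone function of $|\cC_v|$ alone; the repair, needed by both writeups, is to combine the stochastic domination of $|\cC_v|$ with the fact (delivered inside the proof of Proposition~\ref{prop:gen}) that with probability $1-O(\omega^{-1/2})$ a single component captures most of $V_\omega$, so that $\{v\in\cC_1\}$ agrees with a genuinely monotone event up to a failure probability much smaller than $\lambda^{-1}$.

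Your ladder alternative, however, does not go through. Proposition~\ref{prop:gen} applied at $S_n(d_k^*)$ requires $d(S_n(d_k^*))\geq\min\{\omega^3 d(V\setminus S_n(d_k^*)),\,d_k^* n\}$, and for rungs with $d_k^*>d$ both disjuncts can fail at once: $d_k^* n$ exceeds $d(S_n(d_k^*))$ as soon as $|S_n(d_k^*)|<n$, and $d(V\setminus S_n(d_k^*))$ becomes $\Theta(m)$ once the degree-$d$ bulk drops out of $S_n(d_k^*)$. For instance, with $n/2$ vertices of degree $d$, $n/4$ of degree $\sqrt d$, $n/\log n$ of degree $d\log n$ and the rest of degree $1$, any rung $d_k^*\in(d,d\log n)$ gives $d(S_n(d_k^*))=nd$, which is about $2m/3$ yet strictly less than $d_k^* n$ and only twice $d(V\setminus S_n(d_k^*))$. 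Those high rungs can carry a constant share of the degree mass, so the ladder does not in fact bypass Lemma~\ref{lem dom}; you would still need it, or some substitute, to carry the conclusion from the thresholds where Proposition~\ref{prop:gen} applies up to the top of the degree sequence.
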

\begin{proof}
Recall that $d_1\ll d$. 
Let $\lambda=\lambda(n)$ be a function tending to infinity with $n$ such that $\lambda^{13} d_1/d\to 0$. 
Define $d'=d/\lambda$. 
Since $p_1 d'\ge \lambda^{12}$ and $s'=|S_n(d')|\geq |S_n(d)|\geq \delta' n$ (so $d(S_n(d'))\ge \delta' d n\ge d'n$), 
by Part~(b) in Theorem~\ref{thm 1} applied with $p_1$ and $d'$, a.a.s.\ $\mathbf{G}_1$ has a component $\cC_1$ with at least $(1-O(\lambda^{-4}))s'$ vertices. 
Moreover, using that the error bound in Part~(b) in Theorem~\ref{thm 1} is $\exp(-\Omega(n))$ by Remark~\ref{rem:1.4}, the expected number of vertices of $S_n(d')$ outside $\cC_1$ is at most $O(\lambda^{-4} s')$.
As a consequence, for at least $(1-\lambda^{-1})s'$ vertices in $S_n(d')$, 
the probability that they belong to $\cC_1$ has to be at least $1-\lambda^{-1}$: to see this, note that the expected fraction of vertices of $S_n(d')$ belonging to $\cC_1$ would otherwise be at most
\[(1-\lambda^{-1})\cdot 1 + \lambda^{-1}\cdot (1-\lambda^{-1}) = 1 - \lambda^{-2},\]
which is a contradiction with our previous bounds.

Let $d''=d''(n)$ be the smallest integer such that $|S_n(d''+1)| < (1 - \lambda^{-1})s'$. 
By Lemma~\ref{lem dom} one may deduce that, for every $v\in S_n(d'')$, the probability that $v$ belongs to $\cC_1$ is at least $1-\lambda^{-1}$. 
In particular, if an edge has both endpoints of degree at least $d''$, the probability that at least one of its endpoints does not belong to $\cC_1$ is at most $2\lambda^{-1}$. 

Let us show that most of the edges in $\mathbf{G}(\cD_n)$ have both endpoints of degree at least $d''$. First of all, note that $dn=O(m)$ since there are at least $\delta'n+o(n)$ vertices of degree at least $d$. 
On the one hand, the number of edges with at least one endpoint of degree at most $d'$ is at most $d' n = \lambda^{-1} dn=O(\lambda^{-1} m)$. 
On the other hand, the number of vertices of degree between $d'$ and $d''$ is $|S_n(d')\setminus S_n(d'')| = O(\lambda^{-1}s')= o(n)$, so they make up for at most a $\lambda^{-1}$-proportion of  vertices in $S_n(d')$. 
Since these are the vertices of smallest degree in $S_n(d')$, the number of edges incident to them is at most $O(\lambda^{-1} m)$. 
Hence, all but $O(\lambda^{-1}m)$ edges are incident to two vertices of degree at least $d''$. Since any such edge belongs to $\cC_1$ with probability at least $1-2\lambda^{-1}$, 
Markov's inequality implies that, with probability at least $1-\lambda^{-1/2}$, all but $O(\lambda^{-1/2}m)$ edges are contained in $\cC_1$, yielding the lemma.
\end{proof}

Define $\cA_3$ as the following event: for every set $U\subseteq V(\cC_1)$ satisfying $|U|\leq d_1+\sqrt{d_2d}$, the number of red edges of $\mathbf{G}(\cD_n)$ with both endpoints in $V(\cC_1)\setminus U$ is at least $(1-1/c)m/4$.

\begin{corollary}\label{cor dom 2}
We have $\Pr(\cA_3)=1-o(1)$.
\end{corollary}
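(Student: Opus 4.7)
The plan is to show that $\cA_3$ is essentially a deterministic consequence of Lemma~\ref{cor dom}, the only probabilistic input. The randomness enters only through the $o(m)$ bound on the number of edges outside $\cC_1$, and the rest of the argument is a uniform bound on the number of edges that can be destroyed by removing a small set $U$.

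First, I would record two simple deterministic facts. On the one hand, the hypothesis $|S_n(d)|\ge \delta' n - o(n)$ together with every vertex in $S_n(d)$ having degree at least $d$ yields
\begin{equation*}
    m \ge d\, |S_n(d)| \ge (1-o(1))\delta' d n,
\end{equation*}
so $m = \Omega(dn)$. On the other hand, since $d_2\le d_1\ll d$, we have $\sqrt{d_2 d}\le \sqrt{d_1 d} = d\sqrt{d_1/d} = o(d)$, and therefore
\begin{equation*}
    d_1 + \sqrt{d_2 d} = o(d).
\end{equation*}

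Next, I would use these bounds to control the damage done by removing any admissible $U$. For an arbitrary (possibly random) set $U\subseteq V(\cC_1)$ with $|U|\le d_1 + \sqrt{d_2 d}$, the total degree satisfies
\begin{equation*}
    d(U)\le |U|(n-1) \le (d_1 + \sqrt{d_2 d})(n-1) = o(dn) = o(m),
\end{equation*}
uniformly in $U$. In particular, the number of edges of $G(\cD_n)$ with at least one endpoint in $U$ is at most $d(U) = o(m)$.

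Finally, I would combine this with Lemma~\ref{cor dom}: with probability $1-o(1)$, the number of edges of $G(\cD_n)$ contained in $V(\cC_1)$ is at least $m/2 - o(m)$. Subtracting the (uniform) $o(m)$ upper bound on edges incident to $U$ gives at least $m/2 - o(m) \ge m/3$ edges inside $V(\cC_1)\setminus U$, for every admissible $U$ simultaneously. This proves $\Pr(\cA_3) = 1-o(1)$.

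The only step requiring care is checking that the deterministic bound $|U|\le d_1 + \sqrt{d_2 d}$ is strong enough to make $d(U) = o(m)$ without any control on the maximum degree beyond the trivial $n-1$; this is precisely where the hypothesis $d\gg d_1$ (and hence $d\gg \sqrt{d_2 d}$) is used, combined with the lower bound $m = \Omega(dn)$ coming from having $\Theta(n)$ vertices of degree at least $d$.
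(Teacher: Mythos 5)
Your proof is correct and follows essentially the same route as the paper: apply Lemma~\ref{cor dom} to get a.a.s.\ $(1-o(1))m/2$ edges inside $V(\cC_1)$, then observe that any admissible $U$ is incident to at most $|U|n = o(m)$ edges because $d_1 + \sqrt{d_2 d} = o(d)$ while $m = \Omega(dn)$. You spell out the two supporting estimates ($m = \Omega(dn)$ from the $\delta'$ hypothesis, and $\sqrt{d_2 d}=o(d)$ from $d_1\ll d$) a bit more explicitly than the paper does, but the argument is the same.
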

\begin{proof}
On the one hand, Chernoff's bound implies that there must be at least $(1-p_1)m/3 > (1-1/c)m/3$ red edges in total.
On the other hand, by Lemma~\ref{cor dom}, a.a.s.\ there are $(1-o(1))m/2$ edges with both endpoints in $V(\cC_1)$. 
Since $d_2=o(d)$ and $d_1=O(d_2)$ (assumption made in the beginning of this subsection), for every set $U\subseteq V(\cC_1)$ satisfying $|U|\leq d_1+\sqrt{d_2d}$, there are at most $|U|n \le d_1 n+ \sqrt{d_2 d}\, n = o(m)$ edges, incident to $U$.
Since $(1-1/c)m/3 - o(m) \ge (1-1/c)m/4$, the corollary follows.
\end{proof}

\begin{lemma}\label{lem last}
We have $\Pr(\overline{\cA_1\cup \cA_2})=o(1)$.
\end{lemma}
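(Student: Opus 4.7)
The plan is to prove the stronger statement $\Pr(\cA_1) = 1-o(1)$, which immediately implies the lemma. I would condition on $\cA_0 \cap \cA_3$ (both a.a.s.\ by Lemma~\ref{sublem 2.16} and Corollary~\ref{cor dom 2}), and reduce to showing that, for each fixed $v \in S_2 \setminus S_1$, conditional on $v \in S_{iso}$, we have $v \in S^1_{iso}$ with probability $1-o(1)$. Linearity of expectation combined with Markov's inequality applied to $|S_{iso} \setminus S^1_{iso}|$ would then yield $\cA_1$ since $|S_{iso}| \geq \delta n/8$ under $\cA_0$.

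The core tool will be a switching argument on red edges only, which automatically preserves $G_1$ (and hence also $\cC_1$, $S_{iso}$, and the membership of $v$ in $S_{iso}$). For $k \geq 0$, let $\cF_k(v)$ denote the set of edge-colored graphs in the conditioning space in which $v \in S_{iso}$ and $|N(v) \cap V(\cC_1)| = k$. The switching I would use takes a red edge $vu$ with $u \in N(v) \setminus V(\cC_1)$ together with a red edge $xy$ both of whose endpoints lie in $V(\cC_1) \setminus (N(v) \cup N(u) \cup \{v, u\})$, and replaces them by the red edges $vy$ and $xu$; since $u \notin V(\cC_1)$ and $y \in V(\cC_1)$, this strictly increases $|N(v) \cap V(\cC_1)|$ by one, hence sends $\cF_k(v)$ to $\cF_{k+1}(v)$.

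To count forward from $\cF_k(v)$, I would invoke $\cA_3$ applied with $U = N(v) \cup N(u) \cup \{v,u\}$, whose size is at most $2d_1 + 2 \leq d_1 + \sqrt{d_2 d}$ for $n$ large (using $d \gg d_1 = O(d_2)$), to guarantee at least $m/3$ candidate edges $xy$; a Chernoff-plus-union-bound step then ensures that a constant fraction of these are red, giving a forward count of $\Omega((d_2 - k)m)$. For the backward count from $\cF_{k+1}(v)$, I have only $k+1$ choices for $y \in N(v) \cap V(\cC_1)$ and, crucially, only $o(m)$ red edges $xu$ with $u \in V \setminus V(\cC_1)$, because $d(V \setminus V(\cC_1)) = o(m)$ by Lemma~\ref{cor dom}. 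Thus $|\cF_k(v)|/|\cF_{k+1}(v)| = o((k+1)/(d_2-k)) = o(1)$ uniformly for $k < d_2/3$, and telescoping gives $\sum_{k < d_2/3} |\cF_k(v)| = o(|\cF_{d_2/3}(v)|)$, i.e.\ $\Pr(v \in S^1_{iso} \mid v \in S_{iso}) = 1-o(1)$.

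The hard part will be handling the conditioning cleanly: the event $\cA_3$ has to apply for \emph{every} relevant pair $(v,u)$ simultaneously, and a constant fraction of the guaranteed $\geq m/3$ edges in $V(\cC_1) \setminus (N(v) \cup N(u) \cup \{v,u\})$ must be red uniformly in $(v,u)$. This is dealt with by a union bound over the at most $n^2$ such pairs combined with Chernoff's inequality, which is affordable since $m = \Omega(nd) \gg \log n$ in our regime. With this in place, all the counts above are valid and the switching argument closes the proof.
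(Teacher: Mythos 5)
Your plan is to prove the stronger statement $\Pr(\cA_1)=1-o(1)$, but the central step --- the forward switching count --- has a genuine gap. You take $u\in N(v)\setminus V(\cC_1)$ arbitrary, set $U=N(v)\cup N(u)\cup\{v,u\}$, and claim $|U|\leq 2d_1+2\leq d_1+\sqrt{d_2 d}$ so that $\cA_3$ applies. But $d(u)$ is not bounded by $d_1$: $u$ is merely a neighbor of $v$ outside $\cC_1$, and its degree in $G(\cD_n)$ can be arbitrarily large. What actually matters for $\cA_3$ is $|N(u)\cap V(\cC_1)|$, and this can exceed $\sqrt{d_2 d}$ --- that is precisely the definition of a \emph{type-2} neighbor in the paper's terminology. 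For such $u$, $\cA_3$ gives no lower bound on the number of admissible edges $xy\subseteq V(\cC_1)\setminus U$, and the forward count collapses. You also cannot simply restrict to $u\in N^3(v)$ (type-3 neighbors, which do satisfy $|N(u)\cap V(\cC_1)|<\sqrt{d_2 d}$): for a graph in $\cF_k(v)$ where $v\in S^2_{iso}$ it may happen that $N^3(v)=\emptyset$, in which case there are no forward switchings at all, and the ratio $|\cF_k(v)|/|\cF_{k+1}(v)|$ cannot be controlled.

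This is exactly why the paper does \emph{not} prove $\Pr(\cA_1)=1-o(1)$: it proves only $\Pr(\cA_1\cup\cA_2)=1-o(1)$, by switching only edges $vz$ with $z\in N^3(v)$ and bounding $X_3=\sum_{v\in S_{iso}}|N^3(v)|$. Because $X_3$ being large automatically supplies $\Theta(X_3)$ valid choices for the switching edge $vz$, the forward count is always available. The price is the weaker conclusion $\cA_1\cup\cA_2$, and a separate argument (Lemma~\ref{trivial ob 3}) is then needed to show that type-2 vertices also attach to $\cC_1$ after the second stage via two-step green paths. Your route would only work if one could show directly that type-2 vertices are negligible, but nothing in the hypotheses of Theorem~\ref{thm 2} excludes, for instance, regimes with $\sqrt{d_2 d}/d_1=o(\log n)$ where type-2 vertices may well exist. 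Until that is addressed, the stronger claim $\Pr(\cA_1)=1-o(1)$ is unproven (and may in fact be false in full generality).
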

\begin{proof}
For $i\in \{1,2,3\}$, recall that $N^i(v)$ denotes the set of neighbors of $v$ of type $i$, and consider the random variables
$$
X_i= \sum_{v\in S_{iso}} |N^i(v)|. 
$$
Let $k_0\coloneqq d_2 \delta n/72$ and define the event 
$$
\cB=\{X_3> k_0\}.
$$
Under $\cA_0$, we have $\overline{\cA_1\cup \cA_2}\subseteq \cB$: indeed, observe that under $\cA_0$, if $\overline{\cA_1\cup \cA_2}$ holds, then there has to be at least a third of the vertices $v\in S_{iso}$ with $|N^3(v)|> d_2/3$ which gives $X_3> d_2|S_{iso}|/9\geq k_0$. We will show that, under the event $\cA_0\cap \cA_3$, the probability that $\cB$ holds tends to $0$. This completes the proof of the lemma since $\cA_0$ and $\cA_3$ hold a.a.s.

For every $t, k \ge 0$, define $\cF_{t,k}$ as the event that $\cA_0\cap \cA_3$ holds, $X_2+X_3=t$ and $X_3=k$. We have
\begin{align}
    \cB\cap \cA_0\cap \cA_3  = \bigcup_{t\ge k > k_0} \cF_{t,k}.
\end{align}
For $t$ fixed, in the switching argument below, it will be convenient to split the events $\cF_{t,k}$ into disjoint families of size $n$. 
Consider $\cG_{t,\ell} = \cup_{i=0}^{n-1} \cF_{t,n\ell+i}$. 
Letting $\ell_0= \lfloor k_0/n\rfloor$, it suffices to bound the probability of
\begin{align}
    \cB\cap \cA_0\cap \cA_3 = \bigcup_{\substack{t\geq 0 \\ \ell> \ell_0}} \cG_{t,\ell}.
\end{align}

Before going into the technical details, let us give an intuitive explanation of the proof. 
To estimate the probability of $\cB\cap \cA_0\cap \cA_3$, we bound from above the probabilities of $\cG_{t,\ell}$ in a recursive fashion by using switchings. 
Roughly speaking, the events $(\cG_{t,\ell})_{t,\ell}$ could be seen as layers that are consecutively peeled from the event $\cB\cap \cA_0\cap \cA_3$. 
Note that, while $\cF_{t,k}$ seem to be natural candidates to define the layers (in a simpler way), the possibility that the value of $X_3$ can change significantly after a single switching makes the recursive relations long and unpleasant.
Instead, the probability of $\cG_{t,\ell}$ is dominated by using switchings with a closely related event $\cH_{t,\ell}$ defined as union of only six events preceding $\cG_{t,\ell}$ in the lexicographic order.
To do this, we use switchings again. 
We first study ``backward'' switchings. 
Let $G\in \cG_{t,\ell}$ with $\ell > \ell_0$. 
Recall that $\cC_1$ is the largest (blue) component of $G_1$ after the first percolation stage. 
We will switch red edges of the form $vz$, where $v\in S_{iso}$ and $z\in N^3(v)$, with red edges $xy$ for which both endpoints $x$ and $y$ belong to $\cC_1$, thus obtaining the edges $vx$ and $yz$. 
Note that since the edge $xy$ is red, $x$ and $y$ still belong to $\cC_1$ after the switching. 
Now, observe that after this switching, $X_2+X_3$ decreases by 1 or 2: 
indeed, $z \in N^3(v)$, and therefore $|N^3(v)|$ decreases by $1$. It might also happen that $z \in S_{iso}$ and $v \in N^2(z) \cup N^3(z)$, 
in which case $|N^2(z)|+|N^3(z)|$ also decreases by $1$. 
For any other vertex $w$ in $S_{iso}$ being incident via (red) edges to either $v$ or $z$, note that $v$ or $z$ might switch from $N^3(w)$ to $N^2(w)$ but this does not affect $X_2+X_3$. 
By the same argument, $X_3$ alone might decrease by at most $2|S_{iso}|$ (this is the case if $v$ or $z$ have exactly $\sqrt{d_2 d}$ (red) neighbors in $\cC_1$ after the switching), 
see Figure~\ref{fig switch}. Thus, the switched graph is in 
\begin{equation*}
    \mathcal H_{t, \ell} :=\cG_{t-1,\ell}\cup \cG_{t-1,\ell-1}\cup \cG_{t-1,\ell-2}\cup \cG_{t-2,\ell}\cup \cG_{t-2,\ell-1}\cup \cG_{t-2,\ell-2}.
\end{equation*}

\begin{figure}
\centering
\begin{tikzpicture}[scale=1.5,line cap=round,line join=round,x=1cm,y=1cm]
\clip(-6,-2.2) rectangle (3,2.2);
\draw [rotate around={0:(-1.5,1)},line width=1pt] (-1.5,1) ellipse (1.825140769936443cm and 1.039778260055571cm);
\draw [color = red, line width=0.8pt] (-1,-1.5)-- (-1,-2);
\draw [color = red, line width=0.8pt] (-2,-1.5)-- (-1,-2);
\draw [color = red, line width=0.8pt] (-1,-1.5)-- (-2,-2);
\draw [color = red, line width=0.8pt] (-2,-1.5)-- (-3,-1);
\draw [color = red, line width=0.8pt] (-2,-1.5)-- (-3,-1.5);
\draw [color = red, line width=0.8pt] (-2,-1.5)-- (-2,-2);
\draw [color = red, line width=0.8pt] (-1,-1.5)-- (0,-1.5);

\draw [color = red, line width=0.8pt] (-2,0.5)-- (-1,0.5);
\draw [color = red, line width=0.8pt] (-2,-1.5)-- (-1,-1.5);
\draw [color = red, line width=0.8pt] (-2,-1.5)-- (-3,1);
\draw [color = red, line width=0.8pt] (-1,-1.5)-- (0,1);
\draw [color=red,line width=0.4pt,dash pattern=on 2pt off 2pt] (-2,0.5)-- (-2,-1.5);
\draw [color=red,line width=0.4pt,dash pattern=on 2pt off 2pt] (-1,0.5)-- (-1,-1.5);
\draw [color=red,line width=0.8pt] (-1,-1.5)-- (-0.5,1);
\begin{scriptsize}
\draw [fill=black] (-2,0.5) circle (1.5pt);
\draw[color=black] (-2,0.65) node {\large{$x$}};
\draw [fill=black] (-1,0.5) circle (1.5pt);
\draw[color=black] (-1,0.65) node {\large{$y$}};
\draw[color=black] (0.5,0.65) node {\Large{$\cC_1$}};
\draw [fill=black] (-2,-1.5) circle (1.5pt);
\draw[color=black] (-1.9,-1.35) node {\large{$v$}};
\draw [fill=black] (-1,-1.5) circle (1.5pt);
\draw[color=black] (-1.1,-1.35) node {\large{$z$}};
\draw[color=black] (-2.2,-2) node {\large{$w$}};
\draw [fill=black] (-3,-1) circle (1.5pt);
\draw [fill=black] (-3,-1.5) circle (1.5pt);
\draw [fill=black] (-3,1) circle (1.5pt);
\draw [fill=black] (-2,-2) circle (1.5pt);
\draw [fill=black] (0,-1.5) circle (1.5pt);
\draw [fill=black] (0,1) circle (1.5pt);
\draw [fill=black] (-1,-2) circle (1.5pt);
\draw [fill=black] (-0.5,1) circle (1.5pt);
\end{scriptsize}
\end{tikzpicture}
\caption{The switching of $vz$ and $xy$ with $vx$ and $zy$. First, $v$ and $z$ may move from $S_{iso}^2\cup S_{iso}^3$ to $S_{iso}^1$ (note that $z$ may be outside $S_{iso}$ in general while $v\in S_{iso}$ by definition), and therefore for every vertex $w\in S_{iso}$, the edges $wv$ and $wz$ may contribute to $N_2(w)$ instead of $N_3(w)$ after the switching. Apart from that, $z$ loses one edge towards $S_{iso}$ and $v$ loses at most one edge towards $S_{iso}$. Hence, $X_3$ changes by at most $e(v, S_{iso}\setminus v) + e(z, S_{iso}\setminus z)\le 2|S_{iso}|$ but $X_2+X_3$ changes by 1 or 2.}
\label{fig switch}
\end{figure}

There are at least $\ell n$ choices for $v z$. 
Since by definition $\cG_{t,\ell}\subseteq \cA_3$, $d(v)\leq d_1$ and $z$ has at most $\sqrt{d_2 d}$ neighbors in $\cC_1$, 
by Corollary~\ref{cor dom 2} applied with $(N(v) \cup N(z)) \cap V(\cC_1)$ there are $\Omega(m)$ choices for the (red) edge $xy$, 
which ensures $\Omega(\ell n m)$ possible switchings from $\cG_{t,\ell}$ to  $\mathcal{H}_{t, \ell}$.

We now look at ``forward'' switchings. Let $G\in \cH_{t,\ell}$. To obtain a graph in $\cG_{t,\ell}$ we need to switch two edges, $vx$ and $yz$, such that $v\in S_{iso}$, $z \in V \setminus \cC_1$ with less than $\sqrt{d_2d}+1$ neighbors in $\cC_1$ and $x,y \in \cC_1$, to obtain the edges $vz$ and $xy$. There are clearly at most $n^2$ choices for $v$ and $z$, and, given $v$ and $z$, there are at most $d_1(\sqrt{d_2d}+1)$ ways to complete the switching (recall that vertices in $S_{iso}$ have degree at most $d_1$). So the total number of switchings is at most $n^2 d_1 (\sqrt{d_2d}+1)$. 

For $\ell \ge \ell_0$, we double-count the number of switchings between $\cG_{t,\ell}$ and $\mathcal H_{t,\ell}$. Recall that $m\geq (\delta' n-o(n)) d$  and $d_1=O(d_2)$, which in particular implies $d_1=o(\sqrt{d_2 d})$. It follows that 
$$
|\cG_{t,\ell}| \leq \frac{n^2 d_1 (\sqrt{d_2 d}+1)}{\Omega(\ell nm)}\mathcal |\cH_{t,\ell}| = o(|\mathcal H_{t,\ell}|).
$$
Therefore, for every $t\in \mathbb N$,
\begin{align*}
    |\bigcup_{\substack{t\geq 0\\ \ell\geq \ell_0}} \cG_{t,\ell}| =\sum_{\substack{t\geq 0\\ \ell\geq \ell_0}} |\cG_{t,\ell}| = o(1)\sum_{\substack{t\geq 0\\ \ell\ge \ell_0}} |\mathcal{H}_{t,\ell}| =o(1)\sum_{\substack{t\geq 0\\ \ell\ge \ell_0-2}} |\mathcal{G}_{t,\ell}|,
\end{align*}
since each of $(\cG_{t,\ell})_{t\ge 0,\ell\ge \ell_0}$ appears in at most $6$ terms of the sum $\sum_{t\geq 0, \ell\ge \ell_0} |\mathcal{H}_{t,\ell}|$.

We conclude that
$$
\Pr\big(\bigcup_{\substack{t \geq 0\\ \ell\geq \ell_0}} \cG_{t,\ell}\big)
= o\big(\Pr\big(\bigcup_{\substack{t \geq 0 \\ \ell\geq \ell_0-2}} \cG_{k_0,\ell}\big)\big)= o(1),
$$
and the lemma holds. 
\end{proof}

\begin{proof}[Proof of Theorem~\ref{thm 2}]
Let $\cA=\{L_1(\mathbf{G}_2) - L_1(\mathbf{G}_1)\geq c_3\delta n\}$, where $c_3$ is the constant appearing in Corollary~\ref{trivial cor}.
By Corollary~\ref{trivial cor} and Lemma~\ref{lem last}, we have
\begin{align*}
    \Pr(\cA) \geq \Pr(\cA\cap (\cA_1\cup \cA_2))\geq \Pr(\cA_1\cup \cA_2)-o(1) = 1-o(1).
\end{align*}
\end{proof}

\begin{proof}[Proof of Corollary~\ref{thm 3}]
Corollary~\ref{thm 3} is a direct application of Theorem~\ref{thm 2}, with $ (1+\varepsilon)^{-1} d_2$ and $(1-\varepsilon)^{-1} d_1$ instead of $d_2$ and $d_1$, respectively.
\end{proof}

\subsubsection{\texorpdfstring{Remarks on Theorem~\ref{thm 2} and Corollary~\ref{thm 3}}{}}\label{sec:Remarks}

In this section we show that the conditions $\delta' > 0$ and $\delta > 0$ in two of our main results, Theorem~\ref{thm 2} and Corollary~\ref{thm 3}, are needed.

\vspace{1em}

\begin{remark}\label{rem:1}
In both Theorem~\ref{thm 2} and Corollary~\ref{thm 3}, the condition $\delta'>0$ is necessary. In the setting of Corollary~\ref{thm 3}, the next example shows the existence of degree sequences for which a linear size component does not exist a.a.s. even for $p_2$-percolation if $\delta' =0$. Fix $\varepsilon,\delta > 0$, $d_2 = d_2(n) = \log n-\log \log n$ and $d_1 = d_1(n)= \log n + \log \log n$. Let $\cD_n$ be the sequence formed by $(1-\delta) n$ vertices of degree $\delta \log n$ and $\delta n$ vertices of degree $\log n$. We will sketch the argument using the configuration model $\mathbf{CM}(\cD_n)$. For a degree sequence $\mathcal D_n$, define the \emph{$p$-thinned version} of $\cD_n$, denoted by $(\cD_n)_p$, as the random degree sequence satisfying that $\forall v\in V, d(v)_p \sim \text{Bin}(d(v),p)$ where $(d(v)_p)_{v\in V}$ are independent random variables conditioned on their sum being even. It is well-known that \emph{for any} degree sequence, $\mathbf{CM}(\cD_n)_p$ is distributed as $\mathbf{CM}(\cD_n)_p$ (see e.g.~\cite[Lemma 3.2]{fountoulakis2007percolation}). As $n$ grows, $d(v)_{p_2}$ converges to $\text{Po}(d(v){p_2})$ in distribution, and $(\cD_n)_{p_2}$ converges in distribution to the random degree sequence $\hat \cD_n$ composed of $(1-\delta)n$ vertices $u\in V: \hat{d}(u)\sim \mathrm{Po}(\delta(1+\eps))$ and $\delta n$ vertices $u\in V: \hat{d}(u)\sim \mathrm{Po}(1+\eps)$ for some $\eps = \eps(n) = o(1)$, conditionally on $\sum_{u\in V} \hat{d}(u)$ being even. It is easy to check that one may choose sufficiently small $\delta > 0$ so that a.s.
$$
\lim_{n\to \infty} \dfrac{\sum_{v\in V} \hat{d}(v)}{n} \text{ exists and }\lim_{n\to \infty} \dfrac{\sum_{v\in V} \hat d(v)(\hat d(v)-2)}{\sum_{v\in V} \hat d(v)} \text{ exists and is negative}.
$$
Then, by Theorem 2 in~\cite{bollobas2015old} the probability that there exists a component of linear order is at most $\exp(-cn)$ for some $c>0$. Moreover, by choosing $\delta$ sufficiently small one may conclude by Corollary 11.8 in~\cite{FK} that the probability that $\mathbf{CM}(\cD_n)$ is simple is at least $\exp(-O(\frac{1}{m}\sum d(v)^2)) \geq  \exp(-c' n)$ for some $0< c' < c$ (recall that $m = \sum d(v)$.) Therefore, the probability that $L_1(\mathbf{G}(\cD_n)_{p_2})=o(n)$ is at least $1-\exp(-(c-c')n)= 1-o(1)$. A similar example may be constructed for Theorem~\ref{thm 2} as well by choosing $d_2 = d_2(n)=\log n$, $d_1 = d_1(n)=3 \log n$, and $\cD_n$ 
being the sequence formed by $(1-\delta) n$ vertices of degree $\delta \log n$ and $\delta n$ vertices of degree $2 \log n$ for sufficiently small $\delta > 0$.
\end{remark}

\begin{remark}\label{rem:2}
In Theorem~\ref{thm 2} and Corollary~\ref{thm 3}, the condition $\delta > 0$ is also necessary. For example, in the setup of Theorem~\ref{thm 2}, let $\cD_n$ be the constant degree sequence where $d(v)=n-1$ for any $v\in V$. Then, $\mathbf{G}(\cD_n)$ is the complete graph and $\mathbf{G}(\cD_n)_p$ is an Erd\H os-R\'enyi random graph with parameters $n$ and $p$. Let $d_1(n)=\log n$ and $d_2(n)=2\log n$, so $\delta=0$. Since $p_1, p_2=\Theta(1/\log n)$, both $\mathbf{G}(\cD_n)_{p_1}$ and $\mathbf{G}(\cD_n)_{p_2}$ are a.a.s. connected, and in particular
\begin{equation*}
    \mathbb P\left[L_1(\mathbf{G}(\cD_n)_{p_2}) = L_1(\mathbf{G}(\cD_n)_{p_1})\right] = 1-o(1).
\end{equation*}
\end{remark}

\section{Degree sequences with multiple jumps}\label{sec:exm2}

In this section, we show the existence of degree sequences such that the percolated random graph undergoes an unbounded number of phase transitions in the order of the largest connected component. The two examples below are of different nature. Firstly, we show that the proportion of vertices in a linear connected component can experience many jumps. Secondly, we show that even the asymptotic order of the largest component can abruptly change many times. This shows that determining the asymptotic size of the largest component of percolated dense random graphs is a rich and complex problem.
      
\subsection{Jumps in the leading constant}\label{sec:exm1}

Consider the degree sequence $\cD_n$ that has $n/2$ vertices of degree $n^{1/2}$, $n/4$ vertices of degree $n^{3/4}$, $n/8$ vertices of degree $n^{7/8}$, and so on. 
In particular, the average degree of $\mathbf{G}(\cD_n)$ is $n^{1-o(1)}$ and most of the edges have both endpoints in vertices of degree $n^{1-o(1)}$. 
From an intuitive point of view, a typical graph with degree sequence $\cD_n$ looks like an onion: it contains many nested layers with different degree profiles, the most external being the vertices of degree $\sqrt{n}$ and the most internal ones being the vertices of highest degrees.  

For any $\alpha\in (0,1]$, set $p_{\alpha} = p_{\alpha}(n) = n^{\alpha-1}$. Using that the quantity $|S_n(d)|$ jumps at $d = n^{1-1/2^k}$ for all $k\ge 1$, a direct application of Theorem~\ref{thm 1} shows that, for every $k\ge 1$,
\begin{itemize}
    \item if $\alpha > 2^{-k}$, then a.a.s.\ $L_1(\mathbf{G}(\cD_n)_{p_{\alpha}})\ge n/2^{k-1}-o(n)$, and
    \item if $\alpha < 2^{-k}$, then a.a.s.\ $L_1(\mathbf{G}(\cD_n)_{p_{\alpha}})\le n/2^k+o(n)$. 
\end{itemize}
As a result, the step function $f:\alpha\in (0,1]\mapsto 2^{\lceil \log_2(\alpha)\rceil}\in (0,1]$ depicted in Figure~\ref{fig 2} captures the evolution of the largest component in the sense that, for every $\alpha\in (0,1]$, $L_1(\mathbf{G}(\cD_n)_{p_{\alpha}}) = f(\alpha) n+o(n)$.

Interestingly, there is no threshold for the property of having a linear order component: for any $\alpha\in (0,1]$, a.a.s.\ there is a linear order component in the graph percolated with probability $p_{\alpha}$ but there is a.a.s.\ no such component if $p = n^{-1+o(1)}$.

\begin{figure}[ht]
\begin{center}
\includegraphics[width=0.6\textwidth]{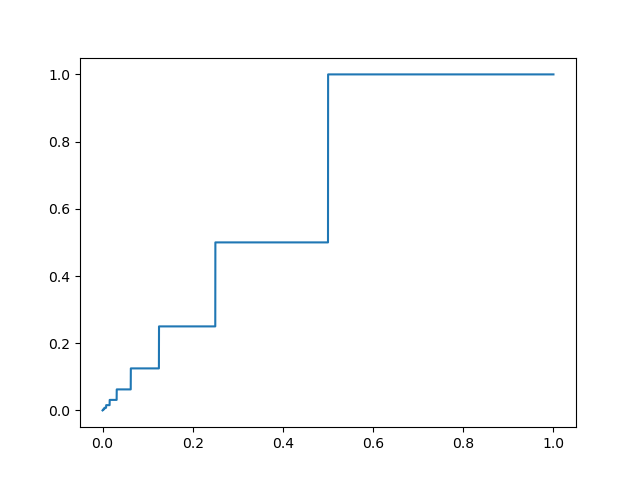}
\end{center}
\caption{The function $f(\alpha)$ for $\alpha\in(0,1]$.}
\label{fig 2}
\end{figure}

      \subsection{Jumps in the order: the proof of Theorem~\ref{thm example}}

In this section we present a degree sequence for which there are many jumps in the asymptotic order of the largest component. We start with the following proposition:

\begin{proposition}\label{thm 7.1}
Fix $d = d(n)$ and $p = p(n) \in [0,1)$ satisfying $dp\gg 1$. Let $\mathbf{H}=\mathbf{H}(\cD_n)$ be a random graph satisfying that the degrees of the vertices of the graphs $\mathbf{H}[S_{\mathbf{H}}(d)], \mathbf{H}[V\setminus S_{\mathbf{H}}(d)]$ and $\mathbf{H}[S_{\mathbf{H}}(d), V\setminus S_{\mathbf{H}}(d)]$ are prescribed, and the three graphs are uniform with respect to the given degrees. 
Denote $S(d) = S_{\mathbf{H}}(d)$ for ease of writing and suppose that there is a function $\lambda:\mathbb N\to \mathbb R^+$ with $\lambda(n)$ tending to $\infty$ as $n \to \infty$ such that:
\begin{enumerate}
    \item[i)]\label{pt 1} Asymptotically almost all vertices $v\in S(d)$ satisfy $\lambda^2/p\le \lambda\, d_{\mathbf{H}[S(d)]}(v)\le d_{\mathbf{H}}(v)$,
    \item[ii)]\label{pt 2} In $\mathbf{H}[S(d), V\setminus S(d)]$, all vertices in $V \setminus S(d)$ have degree at most $(\lambda p)^{-1}$.
\end{enumerate}
Then, the largest component in $\mathbf{H}_p$ has size at least $(1-o(1)) pd|S(d)|$ a.a.s.
\end{proposition}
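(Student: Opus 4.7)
The plan is to construct a single large component of $H_p$ in two almost independent stages. By the construction of $H$, the three auxiliary graphs $H[S_n(d)]$, $H[V\setminus S_n(d)]$ and $H[S_n(d),V\setminus S_n(d)]$ are mutually independent uniform random graphs with the given degrees, and the percolations of their edges are independent. I will first locate a connected component $\cC\subseteq H_p[S_n(d)]$ of order $(1-o(1))|S_n(d)|$, and then argue that the percolated bipartite edges incident to $\cC$ attach $(1-o(1))pd|S_n(d)|$ distinct vertices of $V\setminus S_n(d)$ to $\cC$. Since $pd\gg 1$, the second contribution dominates and yields the claimed bound.

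For the first stage, let $T\subseteq S_n(d)$ be the set of vertices satisfying both inequalities of i), so that $|T|=(1-o(1))|S_n(d)|$ and every $v\in T$ has degree at least $\lambda/p$ inside $S_n(d)$. Since $H[S_n(d)]$ is a uniform random graph on $S_n(d)$ with prescribed degrees, I apply Proposition~\ref{prop:gen} to it with ambient vertex set $S_n(d)$, with ``$d$'' equal to $(\lambda/p)(1-o(1))$, with ``$S$'' equal to $T$, and with $\omega=\lambda^{1/3}$; then $p\cdot(\lambda/p)\ge \omega^3$, and the degree-sum condition follows from $|T|=(1-o(1))|S_n(d)|$ combined with the lower bound $\lambda/p$ on the internal degrees in $T$. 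The proposition produces a component $\cC\subseteq H_p[S_n(d)]$ of order $(1-o(1))|S_n(d)|$, almost all of whose vertices lie in $T$.

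For the second stage, condition on $H$ and on the first-stage percolation. Let $B_\cC$ be the number of bipartite $H$-edges incident to $\cC$, let $N$ be the number of those that percolate, and let $D$ be the number of distinct vertices of $V\setminus S_n(d)$ they reach. Condition i) gives $d_H(v)-e(v,S_n(d))\ge (1-1/\lambda)d_H(v)\ge (1-o(1))d$ for every $v\in\cC\cap T$, hence $B_\cC\ge (1-o(1))d|S_n(d)|$; since $N\sim\mathrm{Bin}(B_\cC,p)$ conditionally and $pB_\cC\to\infty$, Chernoff's inequality yields $N\ge (1-o(1))pd|S_n(d)|$ a.a.s. To control collisions, for $w\in V\setminus S_n(d)$ let $X_w\sim\mathrm{Bin}(b_w^\cC,p)$ be the percolated $\cC$-degree of $w$, where $b_w^\cC\le(\lambda p)^{-1}$ by ii). Using the pointwise inequality $X-\mathds 1_{X\ge 1}\le \binom{X}{2}$, valid for all integers $X\ge 0$, one obtains
\begin{equation*}
\bE[N-D]=\sum_{w\notin S_n(d)}\bE\!\left[X_w-\mathds 1_{X_w\ge 1}\right]\le \sum_w\binom{b_w^\cC}{2}p^2\le \frac{p}{2\lambda}\sum_w b_w^\cC=\frac{1}{2\lambda}\bE[N]=o(\bE[N]),
\end{equation*}
so Markov's inequality gives $D\ge (1-o(1))N\ge (1-o(1))pd|S_n(d)|$ a.a.s. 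The component of $H_p$ containing $\cC$ therefore has size at least $|\cC|+D\ge (1-o(1))pd|S_n(d)|$.

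The main obstacle, in my view, is the careful verification of the hypotheses of Proposition~\ref{prop:gen} inside the subgraph $H[S_n(d)]$: since condition i) is only assumed for ``asymptotically almost all'' vertices of $S_n(d)$, the $o(|S_n(d)|)$ exceptional vertices have a priori unbounded internal degree, making the degree-sum condition $d(T)\ge \min\{\omega^3 d(S_n(d)\setminus T),d'|S_n(d)|\}$ slightly delicate. Handling this may require either restricting $T$ further to vertices with a matching upper bound on the internal degree, or extracting and adapting directly the switching arguments that drive Proposition~\ref{prop:gen}.
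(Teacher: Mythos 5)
Your two-stage plan coincides with the paper's: first find a $(1-o(1))|S_n(d)|$-vertex component $V_0$ in $H_p[S_n(d)]$, then show the bipartite edges leaving $V_0$ attach $(1-o(1))pd|S_n(d)|$ distinct vertices of $V\setminus S_n(d)$. The only difference is in the bookkeeping of the second stage. The paper partitions $V\setminus S_n(d)$ by $n_k$, the number of vertices with exactly $k\le (\lambda p)^{-1}$ neighbours in $V_0$, and computes $\bE[|U|]=\sum_k n_k(1-(1-p)^k)\sim p\sum_k kn_k$ using that $kp\le(\lambda p)^{-1}p=o(1)$. You instead count the number $N$ of percolated bipartite edges incident to $\cC$ and subtract collisions via the pointwise bound $X-\mathds 1_{X\ge 1}\le\binom{X}{2}$ together with condition~ii). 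Both computations rely on exactly the same input ($b_w^{\cC}\le(\lambda p)^{-1}$ for all $w$, and $B_\cC\ge(1-o(1))d|S_n(d)|$ coming from $e(v,S_n(d))\le d_H(v)/\lambda$), and both give the same conclusion; the difference is presentational.

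Your closing worry about the degree-sum hypothesis of Proposition~\ref{prop:gen} is not a genuine gap. The internal degree sequence of $H[S_n(d)]$ is fixed by hypothesis, and condition~i) gives $|S_{H[S_n(d)]}(\lfloor\lambda/p\rfloor)|\ge(1-o(1))|S_n(d)|$, so you may simply invoke Theorem~\ref{thm 1}~b) on $H[S_n(d)]$ with threshold $\lfloor\lambda/p\rfloor$ and $\delta=1$ (as the paper does), and that theorem already takes care of the $d(S)\ge\min\{\omega^3 d(V\setminus S),dn\}$ requirement internally by passing to the reduced threshold $\tilde d=\delta d/2$. The $o(|S_n(d)|)$ exceptional vertices whose internal degree is uncontrolled can only increase the relevant degree sums; at no point in this step do you need an upper bound on their degrees. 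So neither further restriction of $T$ nor a re-derivation of the switching argument is required.
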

\begin{proof}
All subsequent claims in the proof hold only a.a.s. First, by assumption i) almost all vertices in $S(d)$ have degree at least $\lambda/p$ in $\mathbf{H}[S(d)]$ (so in particular $|S(d)|\ge \lambda/p$). Hence, by Theorem~\ref{thm 1}~(b) (applied with $\mathbf{H}[S(d)]$ instead of $\mathbf{G}(\mathcal D_n)$ and $\lfloor \lambda/p\rfloor$ instead of $d$), the largest component of $\mathbf{H}[S(d)]_p$ has size $(1-o(1))|S(d)|$; let $V_0$ be the corresponding vertex set of this component.  Let $k_0 = \lceil (\lambda p)^{-1}\rceil$. For $k\leq k_0$, let $n_k$ be the number the vertices in $V\setminus S(d)$ that have exactly $k$ neighbors in $V_0$. Recall that for every $v\in S(d)$ we have $d(v)\ge d$ and, for most of them, $ e(v,S(d))\le d(v)/\lambda=o(d(v))$.
Therefore
$$
(1-o(1))d|S(d)|\leq e(V_0, V\setminus S(d))= \sum_{k=1}^{k_0} k n_k.
$$
Denote by $U$ the size of the set of vertices in $V\setminus S(d)$ adjacent to $V_0$ after percolation. Then we have
$$
\bE[|U|]= \sum_{k=1}^{k_0} n_k(1-(1-p)^k) \sim p \sum_{k=1}^{k_0} k n_k \geq (1-o(1))pd|S(d)|,
$$
where we used that $(1-(1-p)^k)\sim kp$ since $kp\leq k_0p=o(1)$. As $|S(d)|\geq \lambda/p \to \infty$ as $n\to\infty$, a simple application of Chernoff's inequality shows that $|U|\sim \bE[|U|]$.
\end{proof}

We will use the previous proposition to prove the following statement on the existence of a large component.
\begin{proposition}\label{prop:gen2}
    Let $0<\beta < \alpha<1$ with $\alpha + \beta < 1 < \alpha + 2\beta$. Let $\omega = \omega(n)$, $\hat{\omega} = \hat{\omega}(n)$ be two functions tending to $\infty$ sufficiently slowly as $n\to \infty$ and $\hat\omega(n)\ll \omega(n)$. For $d = n^{\beta}$ and $p = \hat{\omega} n^{1-\alpha-2\beta}$ suppose that the following conditions are satisfied:
\begin{enumerate}[i)]
    \item\label{(i)} $|S_n(d)| \sim n^{\alpha}$,
    \item\label{(ii)} $d(S_n(d))\sim n^{\alpha + \beta}$,
    \item\label{(iii)} $\Delta(V\setminus S_n(d))\le (\omega p)^{-1}$,
    \item\label{(iv)} $d(V\setminus S_n(d)) \sim n$,
    \item\label{(v)} $\Delta(S_n(d))\le n^{2-\alpha-2\beta}$.
\end{enumerate}
Then, the largest component in $\mathbf{G}(\cD_n)_p$ has size at least $n^{1-\beta}$.
\end{proposition}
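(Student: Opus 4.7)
} The plan is to apply Proposition~\ref{thm 7.1} directly to $H=G(\cD_n)$ with the partition $S:=S_n(d)$ and $T:=V\setminus S$. The key structural observation is that, conditional on the vector of ``split degrees'' $(e(v,S),e(v,T))_{v\in V}$, the uniform random graph $G(\cD_n)$ decomposes into three mutually independent uniform random graphs on $S$, on $T$, and across $(S,T)$ with the prescribed degree sequences, matching the distribution of the graph $H$ in Proposition~\ref{thm 7.1}. Pick a function $\lambda=\lambda(n)\to\infty$ slowly, with $\lambda\ll \min\{\omega,\hat\omega\}$ (e.g.\ $\lambda=\min\{\omega,\hat\omega\}^{1/2}$). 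The condition $dp\gg 1$ demanded by Proposition~\ref{thm 7.1} holds since $dp=\hat\omega n^{1-\alpha-\beta}\to\infty$ thanks to $\alpha+\beta<1$ and $\hat\omega\to\infty$.

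Hypothesis~(ii) of Proposition~\ref{thm 7.1} is immediate from assumption~(iii) of Proposition~\ref{prop:gen2}: for every $v\in T$, the bipartite degree is bounded by the total degree, so it is at most $(\omega p)^{-1}\le (\lambda p)^{-1}$. The real work is verifying hypothesis~(i), i.e.\ showing that for a $(1-o(1))$-fraction of $v\in S$ one has $\lambda/p\le e(v,S)\le d(v)/\lambda$. Using $d(S)\sim n^{\alpha+\beta}$, $m=d(S)+d(T)\sim n$ by (ii) and (iv), and $d(v)=o(d(S))$ for those $v\in S$ of not-too-large degree, the mean of $e(v,S)$ under the uniform distribution satisfies
\[
\mu(v):=\bE[e(v,S)]=(1+o(1))\,d(v)\cdot\frac{d(S)-d(v)}{m-d(v)}=(1+o(1))\,d(v)\cdot n^{\alpha+\beta-1}.
\]
Since $d(v)\ge n^{\beta}$, we get $\mu(v)\ge (1-o(1))\,n^{\alpha+2\beta-1}$; whereas $\lambda/p=(\lambda/\hat\omega)\,n^{\alpha+2\beta-1}=o(\mu(v))$. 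For the upper bound, $\mu(v)=d(v)\cdot n^{\alpha+\beta-1}\le d(v)/\lambda$ holds as soon as $\lambda\le n^{1-\alpha-\beta}$, which is automatic. So the target window $[\lambda/p,\,d(v)/\lambda]$ comfortably contains $\mu(v)$.

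It remains to prove concentration. The plan is to bound $\mathrm{Var}(e(v,S))=O(\mu(v))$ under the uniform model via a switching argument analogous to those in Section~\ref{sec:switch}: one sets $\mathcal A_k:=\{e(v,S)=k\}$ and switches an edge $vu$ with $u\in T$ against a non-edge $vw$ with $w\in S$ (absorbing the parity through an auxiliary edge/non-edge pair), yielding a ratio $|\mathcal A_{k+1}|/|\mathcal A_k|$ of hypergeometric shape. Chebyshev's inequality then gives
\[
\Pr\!\left[|e(v,S)-\mu(v)|\ge \mu(v)/2\right]=O(1/\mu(v))=O(n^{1-\alpha-2\beta}),
\]
so the expected number of $v\in S$ failing this concentration is $O(|S|\cdot n^{1-\alpha-2\beta})=O(n^{1-2\beta})=o(|S|)$, using precisely the assumption $\alpha+2\beta>1$. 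Assumption~(v) is used to handle the few atypically high-degree vertices in $S$: it guarantees $\mu(v)\le n^{2-\alpha-2\beta}\cdot n^{\alpha+\beta-1}=n^{1-\beta}\ll d(v)/\lambda$, so even those vertices do not violate the upper bound in~(i). Markov's inequality then places a $(1-o(1))$-fraction of $v\in S$ inside the window a.a.s. Invoking Proposition~\ref{thm 7.1} now gives $L_1(G(\cD_n)_p)\ge (1-o(1))\,pd\,|S|=(1-o(1))\,\hat\omega\,n^{1-\beta}\gg n^{1-\beta}$ a.a.s., as required. The hardest technical step is the variance bound via switching, since the standard configuration-model intuition has to be reproved rigorously in the uniform simple-graph model, especially when $d(v)$ approaches the range allowed by~(v).
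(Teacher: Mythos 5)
Your overall route coincides with the paper's: apply Proposition~\ref{thm 7.1} with $H=G(\cD_n)$ after noting that, conditioned on the ``split degrees'' $(e(v,S),e(v,V\setminus S))_{v}$, the uniform random graph factors into three independent uniform random graphs on $S$, on $V\setminus S$, and across $(S,V\setminus S)$ (this decomposition is correct and is implicit in the paper as well), and then the whole burden lies in verifying hypothesis~(i) of Proposition~\ref{thm 7.1}, i.e.\ that a $(1-o(1))$-fraction of $v\in S$ has $e(v,S)$ in the window $[\lambda/p,\ d(v)/\lambda]$. Your bookkeeping there is fine: by (i) and (ii) almost all of $S$ has degree $(1+o(1))n^\beta$, and the expected value $\mu(v)\sim d(v)\,n^{\alpha+\beta-1}$ sits comfortably inside the window provided $\lambda\ll\min\{\omega,\hat\omega\}$. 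The assumption $\alpha+2\beta>1$ is exactly what makes $|S|\cdot n^{1-\alpha-2\beta}=o(|S|)$, so your Markov step is sound.

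Where you diverge from the paper, and where your sketch has a genuine gap, is the concentration step. You propose to bound $\mathrm{Var}(e(v,S))=O(\mu(v))$ by a switching argument and then apply Chebyshev to get a per-vertex failure probability $O(1/\mu(v))$. That variance bound is not proved, and it is really the only nontrivial content of the verification; the rest of the sketch is arithmetic. The paper instead proves two explicit switching lemmas (Lemmas~\ref{lem 3.2} and~\ref{lem 3.3}), which control the ratios $|\cF_{i-1}|/|\cF_i|$ directly and yield \emph{exponentially} small failure probabilities for each vertex, so that a union bound over all of $S$ works; this is strictly stronger than what Chebyshev would deliver and, importantly, the switching computation you would need to get the variance bound is essentially the same computation the paper does to get the ratio bound, so there is no savings in opting for the weaker tool. (Indeed, the ``hypergeometric shape'' ratio you mention immediately gives sub-geometric tails, not just a variance estimate.) You should also be aware that your invocation of assumption~(v) is a bit off: the inequality $\mu(v)\le d(v)/\lambda$ follows directly from $\mu(v)\sim d(v)n^{\alpha+\beta-1}$ and $\lambda\ll n^{1-\alpha-\beta}$, and does not really rely on~(v); in the paper, (v) enters in the switching count of Lemma~\ref{lem 3.3} (to ensure $\Delta(S)\Delta(V\setminus S)=o(n)$), a role your sketch does not surface. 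In short: correct structure and correct final arithmetic, but the central switching/concentration lemma is asserted rather than proved, and it is where all the work is.
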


Before proving the proposition we will need two auxiliary lemmas. Let us define
\begin{align*}
    & A = \{v\in S_n(d)\setminus S_n(2d):  e(v, S_n(d))\le  \tfrac{1}{8}n^{\alpha+2\beta-1}\},\\
    & B = \{v\in S_n(d): e(v, S_n(d))\ge 2 n^{\alpha+\beta-1} d(v)\}.
\end{align*}

In the sequel, we denote $\mathbf{G} = \mathbf{G}(\cD_n)$.

\begin{lemma}\label{lem 3.2}
A.a.s.  $A = \emptyset$.
\end{lemma}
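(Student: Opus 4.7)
I plan to prove Lemma~\ref{lem 3.2} via a switching argument that establishes tail concentration of $e(v, S_n(d))$ around its configuration-model mean $\sim d(v)\, d(S_n(d))/(2m) \sim n^{\alpha+2\beta-1}$, using (ii), (iv), and $\alpha+\beta<1$ (which gives $2m\sim n$), for each $v\in S_n(d)\setminus S_n(2d)$. The threshold $\tfrac{1}{8}n^{\alpha+2\beta-1}$ in the definition of $A$ is a constant fraction of this heuristic mean, so an exponentially small lower-tail probability combined with a union bound will suffice. A preliminary observation, derived from (i), (ii), and the fact that the minimum degree in $S_n(d)$ equals $d$, is that $|S_n(d)\setminus S_n(2d)| = (1-o(1))n^\alpha$: otherwise $d(S_n(d))$ would exceed $n^{\alpha+\beta}$, contradicting (ii).

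Fix $v\in S_n(d)\setminus S_n(2d)$, and for $k\ge 0$ let $\mathcal F_k := \{G\in \cG(\cD_n): e_G(v, S_n(d))=k\}$. I would use the following cut-preserving switching mapping $\mathcal F_k$ to $\mathcal F_{k+1}$: swap the edges $(vy, xz)$ for the non-edges $(vx, yz)$, with $y\in N(v)\setminus S_n(d)$, $x\in S_n(d)\setminus (N(v)\cup\{v\})$, and $z\in N(x)\cap(V\setminus S_n(d))\setminus(N(y)\cup\{v,y\})$. By (iv) the reverse count from $\mathcal F_{k+1}$ is at most $(k+1)\cdot d(V\setminus S_n(d)) \sim (k+1)n$, while the forward count from $\mathcal F_k$ is at least $(d(v)-k)\bigl[e(S_n(d), V\setminus S_n(d)) - \Sigma_v' - o(n^{\alpha+\beta})\bigr]$, where $\Sigma_v' := \sum_{x\in N(v)\cap S_n(d)} e(x, V\setminus S_n(d))$ and the $o(n^{\alpha+\beta})$ error comes from the $z$-constraints via (iii). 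Two auxiliary a.a.s.\ facts bring this lower bound to $(d(v)-k)(1-o(1))n^{\alpha+\beta}$: (A) $e(S_n(d), V\setminus S_n(d)) = (1-o(1))n^{\alpha+\beta}$, equivalently $e(S_n(d), S_n(d))=o(n^{\alpha+\beta})$ (whose configuration-model expectation is $\sim n^{2\alpha+2\beta-1}=o(n^{\alpha+\beta})$ as $\alpha+\beta<1$, with the bound transferred to the uniform model via a separate switching on internal versus cross-edges); and (B) $\Sigma_v'=o(n^{\alpha+\beta})$ uniformly in $v$, combining (v) (so $\Sigma_v'\le k\cdot\Delta(S_n(d))\le \tfrac{1}{8}n$) with a further a.a.s.\ control on the number of high-degree vertices of $S_n(d)$ in $N(v)$. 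The resulting ratio estimate is
\[
\frac{|\mathcal F_k|}{|\mathcal F_{k+1}|} \le \frac{C(k+1)}{n^{\alpha+2\beta-1}}
\]
for an absolute constant $C>0$ and all $k\le \tfrac{1}{8}n^{\alpha+2\beta-1}$, using $d(v)-k\ge d/2$ throughout this range since $k\ll d$.

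To conclude, let $k_0=\lfloor \tfrac{1}{8}n^{\alpha+2\beta-1}\rfloor$ and $k^*=\lfloor n^{\alpha+2\beta-1}/(4C)\rfloor$, so each ratio factor is $\le 1$ on $[k_0, k^*-1]$. Iterating and applying Stirling's formula gives
\[
\frac{|\mathcal F_{k_0}|}{|\mathcal F_{k^*}|} \le \prod_{k=k_0}^{k^*-1}\frac{C(k+1)}{n^{\alpha+2\beta-1}} \le \exp\bigl(-\Omega(n^{\alpha+2\beta-1})\bigr),
\]
so $\Pr(\mathcal F_{k_0}) \le \exp(-\Omega(n^{\alpha+2\beta-1}))$, and summing the geometric series $\sum_{k\le k_0}|\mathcal F_k| \le 2|\mathcal F_{k_0}|$ (since each ratio for $k<k_0$ is at most $1/2$) yields $\Pr(v\in A) \le \exp(-\Omega(n^{\alpha+2\beta-1}))$. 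A union bound over the at most $n$ such vertices $v$, together with $n^{\alpha+2\beta-1}\gg \log n$ (since $\alpha+2\beta-1$ is a fixed positive constant), gives $A=\emptyset$ a.a.s. The main obstacle will be the lower bound on the forward count, specifically the auxiliary claims (A) and (B): each requires its own switching or concentration argument, with (B) in particular ruling out the degenerate case where the neighbors of $v$ inside $S_n(d)$ account for essentially all of $e(S_n(d), V\setminus S_n(d))$, which is the only way the simple bound $\Sigma_v'\le d(S_n(d))$ could be tight.
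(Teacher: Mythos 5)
Your overall framework is the same as the paper's: fix $v$, partition $\cG(\cD_n)$ by $\cF_k=\{e(v,S_n(d))=k\}$, bound $|\cF_k|/|\cF_{k+1}|$ by double-counting switchings, iterate, and union-bound over $v$ using $\alpha+2\beta-1>0$. The difference is the choice of switching, and that difference is where the real gap lies.

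When you replace the edges $vy,xz$ by $vx,yz$, you additionally insist that $z\in V\setminus S_n(d)$. That extra restriction turns the forward count from a product of degree-type quantities into a cross-edge count, and forces you to prove two side claims: (A) $e(S_n(d),V\setminus S_n(d))=(1-o(1))n^{\alpha+\beta}$ and (B) $\Sigma_v'=o(n^{\alpha+\beta})$ uniformly over $v$. Claim (A) is plausible and you at least sketch a switching proof, but claim (B) is a genuine hole: your own bound from condition~\eqref{(v)} only gives $\Sigma_v'\le k\cdot\Delta(S_n(d))\le\tfrac18 n$, which since $\alpha+\beta<1$ misses the target $o(n^{\alpha+\beta})$ by a polynomial factor. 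Closing it would require showing that, uniformly over $v$, the neighbors of $v$ inside $S_n(d)$ are not disproportionately concentrated on the highest-degree vertices; this is a separate concentration statement of comparable difficulty to Lemma~\ref{lem 3.2} itself, not a routine refinement. You acknowledge this is the main obstacle, and it genuinely is.

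The paper avoids (A) and (B) entirely by not imposing the restriction on the third vertex. In the paper's switching $vu,xy\to vx,uy$ (your $y,z$ are their $u,y$), the only constraint on $y$ is $y\in V\setminus N_G[u]$, so the forward count factors cleanly as
\begin{equation*}
(d-(i-1))\,\bigl|S_n(d)\setminus N_G[v]\bigr|\,\bigl(d-\Delta(V\setminus S_n(d))-1\bigr)\sim n^{\alpha+2\beta},
\end{equation*}
using only \eqref{(i)}, \eqref{(iii)}, and $\beta<\alpha$ — no auxiliary control on cross-edges at all. With that replacement your remaining steps go through; indeed once the ratio $|\cF_{i-1}|/|\cF_i|$ is $\le\tfrac12+o(1)$ on an interval of length $\Theta(n^{\alpha+2\beta-1})$, a plain geometric bound already gives $\Pr(v\in A)\le(\tfrac12+o(1))^{\Omega(n^{\alpha+2\beta-1})}$, so the Stirling-style telescoping you propose is more machinery than is needed.
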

\begin{proof}
Fix a vertex $v$ of degree $d(v)\in [n^{\beta}, 2 n^{\beta}]$ and let $\gamma=\alpha + 2\beta - 1$. We use the switching method to show the statement of the lemma. Denote by $\cF_i$ the event that $e(v,S_n(d)\setminus v) = i$. Suppose that $i\le \tfrac{1}{4} n^{\gamma}$. The number of switchings from $\cF_i$ to $\cF_{i-1}$ is bounded from above by $2 i\cdot d(V(\mathbf{G})) \le (\tfrac{1}{2}+o(1)) n^{\gamma+1}$, by \eqref{(ii)} and \eqref{(iv)}.
On the other hand, to construct a switching from $\cF_{i-1}$ to $\cF_i$ it suffices to choose  a vertex $u\in N_{\mathbf{G}}[v]\setminus S_n(d)$ and an edge $xy$ with $x\in S_{n}(d)\setminus N_{\mathbf{G}}[v]$ and $y\in V\setminus N_{\mathbf{G}}[u]$ to obtain the edges $vx$ and $uy$ after the switching. By \eqref{(i)} and~\eqref{(iii)}, and since $\beta<\alpha$, the number of choices is at least
\begin{align*}
\sum_{u\in N_{\mathbf{G}}[v]\setminus S_n(d)} d(S_n(d)\setminus N_{\mathbf{G}}[v], V\setminus N_{\mathbf{G}}[u]) &\geq (d - (i-1)) |S_n(d)\setminus N_{\mathbf{G}}[v]|(d - \Delta(V\setminus S_n(d)) - 1)\\
&~\sim n^{\alpha+2\beta}=n^{\gamma+1}.
\end{align*}

We conclude that 
\begin{equation*}
    |\cF_{i-1}|\le (\tfrac{1}{2} + o(1))|\cF_i|, 
\end{equation*}
and
\begin{equation*}
    \mathbb P(v\in A)\le \dfrac{|\bigcup_{i \le \frac{1}{8}n^{\gamma}} \cF_{\lceil \frac{1}{8} n^{\gamma}\rceil}|}{|\cF_{\lfloor\frac{1}{4} n^{\gamma}\rfloor}|} \leq (\tfrac{1}{2} + o(1))^{\frac{1}{8}n^{\gamma}}.
\end{equation*}
Thus, by a union bound, a.a.s.\ $A = \emptyset$.
\end{proof}

\begin{lemma}\label{lem 3.3}
A.a.s. $B = \emptyset$.
\end{lemma}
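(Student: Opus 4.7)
The plan is to mirror the switching argument of Lemma~\ref{lem 3.2}, but run in the opposite direction: whereas that lemma showed $e(v, S_n(d))$ cannot be too small, here we will show it cannot exceed $2 d(v)\, n^{\alpha+\beta-1}$ by more than negligibly much. Fix $v \in S_n(d)$, set $i_0 = d(v)\, n^{\alpha+\beta-1}$, and note that $i_0 \geq n^{\alpha+2\beta-1} \to \infty$ by the assumption $\alpha + 2\beta > 1$. Let $\cF_i$ denote the event that $e(v, S_n(d)\setminus\{v\}) = i$. I want to establish a ratio bound of the form $|\cF_i|/|\cF_{i-1}| \leq (1+o(1))\, i_0/i$, and then iterate.

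For the \emph{upper} bound on switchings $\cF_{i-1}\to\cF_i$: to gain a neighbor of $v$ in $S_n(d)$, pick $u \in N_G(v)\setminus S_n(d)$ (at most $d(v)$ choices) and an edge $xy$ with $x \in S_n(d)\setminus N_G[v]$ (at most $d(S_n(d)) \sim n^{\alpha+\beta}$ choices), switching $vu, xy$ into $vx, uy$. This gives at most $(1+o(1))\, d(v)\, n^{\alpha+\beta}$ switchings. For the \emph{lower} bound on switchings $\cF_i\to\cF_{i-1}$: pick $x \in N_G(v)\cap S_n(d)$ ($i$ choices) and an oriented edge $uy$ with $u \in V\setminus(S_n(d)\cup N_G[v])$ and $y \notin N_G[x]\cup\{v\}$. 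Condition~\eqref{(iv)} furnishes $d(V\setminus S_n(d)) \sim n$ such oriented edges before any exclusion; conditions~\eqref{(iii)} and~\eqref{(v)} let me bound the excluded contributions. Specifically, edges with $u \in N_G(v)$ number at most $d(v)\Delta(V\setminus S_n(d)) \leq n^{2-\alpha-2\beta}\cdot(\omega p)^{-1} = (\omega\hat{\omega})^{-1} n = o(n)$; edges with $y \in N_G(x)\cap (V\setminus S_n(d))$ are $o(n)$ by the same computation; and edges with $y \in N_G(x)\cap S_n(d)$ are at most $e(S_n(d), V\setminus S_n(d))\leq d(S_n(d))\sim n^{\alpha+\beta} = o(n)$ since $\alpha+\beta<1$. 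This yields at least $i\, n(1-o(1))$ switchings.

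Combining, $|\cF_i|/|\cF_{i-1}| \leq (1+o(1))\, i_0/i$. For $i > 2i_0$ the ratio is at most $3/5$, so $\sum_{i \geq 2i_0}|\cF_i| \leq 3|\cF_{2i_0}|$. Between $i_0$ and $2i_0$, the telescoping product gives
\[
\frac{|\cF_{2i_0}|}{|\cF_{i_0}|} \leq (1+o(1))^{i_0}\prod_{j=i_0+1}^{2i_0}\frac{i_0}{j} = (1+o(1))^{i_0}\cdot \frac{i_0^{i_0}\, i_0!}{(2i_0)!},
\]
which by Stirling is at most $(e/4 + o(1))^{i_0}$, hence $e^{-\Omega(i_0)}$. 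Therefore $\mathbb P(v \in B) \leq e^{-\Omega(i_0)} \leq e^{-\Omega(n^{\alpha+2\beta-1})}$, and since $|S_n(d)| \sim n^{\alpha}$ a union bound over $v \in S_n(d)$ yields $\mathbb P(B \neq \emptyset) = o(1)$.

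The main obstacle will be verifying cleanly the lower bound $n(1-o(1))$ on oriented edges $uy$ available for the reverse switching; this is where conditions \eqref{(iii)}, \eqref{(iv)}, \eqref{(v)} and the inequality $\alpha+\beta < 1 < \alpha+2\beta$ interact most tightly, and careless bookkeeping could easily inflate one of the forbidden regions into an $\Theta(n)$ contribution. A minor subtlety is that, unlike Lemma~\ref{lem 3.2}, here we cannot restrict to a narrow degree window for $v$ since vertices in $S_n(d)$ may have degrees as large as $n^{2-\alpha-2\beta}$; but because both the target threshold $2i_0$ and the switching counts scale linearly in $d(v)$, the ratio bound and the resulting exponent $i_0 \geq n^{\alpha+2\beta-1}$ hold uniformly, so no discretisation over degrees is needed.
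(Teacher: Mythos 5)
Your proposal is correct and follows essentially the same switching argument as the paper's proof: both bound the ratio $|\cF_i|/|\cF_{i-1}|$ by comparing forward switchings (bounded above by $\sim d(v)\,n^{\alpha+\beta}$) to reverse switchings (bounded below by $\sim i\,n$ using conditions~\eqref{(ii)}--\eqref{(v)}), then iterate to get exponential decay in $i_0 = d(v)\,n^{\alpha+\beta-1} \geq n^{\alpha+2\beta-1}$ and apply a union bound over $S_n(d)$. The only cosmetic difference is that you carry the $i$-dependent ratio $(1+o(1))i_0/i$ through a Stirling telescoping estimate, while the paper simply notes the ratio is at most $(2-o(1))^{-1}$ for all $i \geq i_0$ and iterates over one block of length $i_0$; both give the same order of exponential decay.
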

\begin{proof}
Fix a vertex $v \in S_n(d)$ of degree $d(v)\geq n^{\beta}$. Denote by $\cF_i$ the event that $e(v, S_n(d)) = i$. Suppose that $i\ge n^{\alpha+\beta-1} d(v)$. The number of switchings from $\cF_{i-1}$ to $\cF_i$ is bounded from above by $d(v)\cdot d(S_n(d)) \sim n^{\alpha+\beta} d(v)$, by \eqref{(ii)}.

On the other hand, for each $u\in N_{\mathbf{G}}(v)\cap S_n(d)$ we may switch $vu$ with any edge $wz$ provided that $w,z\in V\setminus (S_n(d)\cup N_{\mathbf{G}}[u]\cup N_{\mathbf{G}}[v])$. Thus, for a fixed $u$, by \eqref{(ii)}, \eqref{(iii)}, \eqref{(iv)} and \eqref{(v)} the number of choices for $wz$ is at least
$$
(1-o(1)) n - (1+o(1)) n^{\alpha+\beta} - 2\Delta(S_n(d))\Delta(V\setminus S_n(d))\sim n,
$$
and, since each choice gives rise to two switchings, the total number of switchings from $\cF_i$ to $\cF_{i-1}$ is at least
$$
2i\cdot (1-o(1))n \geq (2-o(1)) n^{\alpha+\beta} d(v).
$$

We conclude that
\begin{equation*}
   |\cF_{i-1}|\geq \left(2-o(1)\right) |\cF_i|,
\end{equation*}
and, as in the proof of the previous lemma,
$$
\Pr(v\in B)\leq  \left(2 - o(1)\right)^{-n^{\alpha+\beta-1}d(v)} \le \left(2 - o(1)\right)^{-n^{\alpha+2\beta-1}}.
$$ 
Thus, by a union bound, a.a.s.\ $B = \emptyset$.
\end{proof}

\begin{proof}[Proof of Proposition~\ref{prop:gen2}]
Fix $\lambda=\hat\omega/8$. We will show that the (random) degree sequences of the graphs $\mathbf{G}[V\setminus S_n(d)]$, $\mathbf{G}[S_n(d), V\setminus S_n(d)]$ and $\mathbf{G}[S_n(d)]$ a.a.s.\ satisfy the two conditions of Proposition~\ref{thm 7.1} with the given $d$ and $p$. First, \eqref{(iii)} implies the second condition as $\lambda\leq \omega$. To show that the first condition holds, we use Lemmas~\ref{lem 3.2} and~\ref{lem 3.3}. By \eqref{(i)} and \eqref{(ii)} we have that a $(1-o(1))$-proportion of the vertices of $S_n(d)$ have degree $(1+o(1))n^\beta$, and by Lemma~\ref{lem 3.2}, a.a.s.\ all such vertices satisfy 
$$
\lambda e(v,S_n(d))\geq \frac{\lambda}{8} n^{\alpha+2\beta-1} =\frac{\lambda^2}{p}. 
$$
Moreover, by Lemma~\ref{lem 3.3}, every vertex $v$ in $S_n(d)$ satisfies
$$
\lambda e(v,S_n(d))\leq 2 \lambda n^{\alpha+\beta-1}d(v) \leq d(v). 
$$
since $\lambda$ increases sufficiently slowly to infinity, while $\alpha+\beta<1$. 

By applying Proposition~\ref{thm 7.1} we obtain the result.
\end{proof}

With the results previously developed, we are able to provide a proof of Theorem~\ref{thm example}.

\begin{proof}[Proof of Theorem~\ref{thm example}]
We choose  $\alpha_0 = 0$, and $\alpha_i = 1 - \frac{11}{8\cdot 5^{i+1}}$ and $\beta_i = \delta_i = \frac{1}{5^{i+1}}$ for every $i\in [k]$. Note that $(\alpha_i)_i$ is increasing whereas $(\beta_i)_i$ is decreasing.

For Part~(b), we apply Proposition~\ref{prop:gen2} with $\alpha = \alpha_i, \beta = \beta_i$ and suitable chosen $\hat\omega$. Let us check that all conditions are satisfied. Condition \eqref{(i)} is trivial, \eqref{(ii)} is satisfied since
$$
(n^{\alpha_i}-n^{\alpha_{i-1}})n^{\beta_i} \leq d(S_n(d))\leq  \sum_{j=1}^i n^{\alpha_j+\beta_j}\sim n^{\alpha_i+\beta_i},
$$
\eqref{(iii)} is satisfied since $\Delta(V\setminus S_n(d)) = n^{\beta_{i+1}}\ll n^{\alpha+2\beta-1}$, \eqref{(iv)} is satisfied since the total number of edges incident to $V \setminus S_n(d)$ is at least  $n-n^{\alpha_k}+1\sim n$, and at most $n + \sum_{j=1}^k n^{\alpha_j+\beta_j} \sim n$, and~\eqref{(v)} is satisfied since for all $i\in [k]$ one has $2-2\beta - \alpha = 1 - (8 \cdot 5^i)^{-1} \ge 39/40\geq 1/25 =\beta_1\geq \beta$. We conclude by Proposition~\ref{thm 7.1} that $L_1(\mathbf{G}_{p})\ge n^{1-\beta}$.

Let us now prove Part~(a). Fix $i \in [2,k]$ and set $d_i = n^{\beta_i}$. By Lemma~\ref{lem 3.3} we may assume that $B = \emptyset$. Let $S^i=S_n(d_i)$. For any $v\in V$, we will show that the total order of the components of $\mathbf{G}_p$ containing $S^{i-1}\cup \{v\}$ is $O(n^{1-\beta_{i}-\delta_i})$ with probability $o(1/n)$. A union bound over all choices of $v$ will allow us to conclude.

To bound the order of the components, we will couple its exploration process with a subcritical 2-type branching process that stochastically dominates it. The first type of offspring will correspond to vertices in $T_1=S^i\setminus S^{i-1}$ (all having degree $n^{\beta_i}$ in $\mathbf{G}$), and the second type to vertices in $T_2=V\setminus S^i$ (all having degree at most $n^{\beta_{i+1}}$). Note that the vertices in $S^{i-1}$ do not appear throughout the exploration process (below, we show that the total number of edges incident to $S^{i-1}$ is ``small'' a.a.s., which will allow us to ignore them).
 
Let $\gamma_i=\alpha_i+2\beta_i-1 = \tfrac{1}{8\cdot 5^{i}}$ and $\omega = (n^{\gamma_i} p)^{-1}$. Then, under the a.a.s.\ event that $B=\emptyset$, for any vertex $x\in T_1$, the number of children of $x$ in $T_1$ in the exploration process is stochastically dominated by a $\text{Bin}(2n^{\gamma_i},p)$.

Consider thus the following 2-type branching process $(X_t)_{t\ge 0}=(X_{t,1},X_{t,2})_{t\ge 0}$ with types $1$ and $2$ having offspring distributions 
\begin{align*}
    \xi_{11}\sim \text{Bin}(2n^{\gamma_i},p),\;
    \xi_{12}\sim \text{Bin}(d_i,p),\;
    \xi_{21}\sim \text{Bin}(d_{i+1},p),\;\text{and }
    \xi_{22}\sim \text{Bin}(d_{i+1},p).
\end{align*}
Note that, if $X_{0,1}=1$ and  $X_{0,2}=0$, then $(X_t)_{t\ge 0}$ stochastically dominates the size of the exploration process starting at a vertex in $T_1$, whilst if  $X_{0,1}=0$ and $X_{0,2}=1$, then it dominates the process starting at a vertex in $T_2$. By construction, the process $(X_t)_{t\ge 0}=(X_{t,1},X_{t,2})_{t\ge 0}$ has mean matrix
$$
\begin{pmatrix}
2n^{\gamma_i} p & d_i p\\
d_{i+1}p & d_{i+1} p\\
\end{pmatrix}
\leq 
\begin{pmatrix}
2/\omega & n^{\frac{3}{8\cdot 5^{i+1}}}\\
n^{-\frac{17}{40\cdot 5^{i+1}}} & n^{-\frac{17}{40\cdot 5^{i+1}}}\\
\end{pmatrix}=:M.
$$

Both the trace and the determinant of $M$ are $o(1)$, and thus its largest eigenvalue is $o(1)$. By standard results in multi-type branching processes (see e.g. Theorem VIII.3.2 in~\cite{athreya1972}), $(X_t)_{t\ge 0}$ is subcritical.

We prove the following result on the total progeny of the branching process.

\begin{claim}\label{claim X}
We have $\sum_{t\ge 0} (X_{t,1}+X_{t,2}) =O( (\log n)^4 \mathbb{E}[\xi_{12}])$ with probability $1 - o(n^{-2})$.
\end{claim}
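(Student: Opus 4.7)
The plan is to leverage the strong subcriticality of $(X_t)_{t\geq 0}$. As noted just before the claim, the mean matrix $M$ has spectral radius $\lambda = \lambda(n) = o(1)$; indeed $\lambda \leq \max\{M_{11}, M_{22}, \sqrt{M_{12}M_{21}}\}$, and each of these quantities tends to $0$ as $n \to \infty$ (for instance $\sqrt{M_{12}M_{21}} = n^{-1/(40 \cdot 5^{i+1})}$). Set $T = T(n) := \lceil 4\log n / \log(1/\lambda) \rceil$, which is at most $(\log n)^3$ in every regime compatible with $\lambda \to 0$.

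The first step is to show that the process is extinct by time $T$ with probability $1 - o(n^{-2})$. Using $\mathbb{E}[X_t] = X_0 M^t$ and a direct computation (a type-1 ancestor produces a "burst" of expectation $M_{12} = \mathbb{E}[\xi_{12}]$ at the next generation, which then decays at rate $\lambda$), one obtains $\mathbb{E}[X_{t,1} + X_{t,2}] = O(\mathbb{E}[\xi_{12}]\,\lambda^{t-1})$ for $t \geq 1$, regardless of whether $X_0=(1,0)$ or $(0,1)$. By the choice of $T$ and the trivial bound $\mathbb{E}[\xi_{12}] \leq n$, this is $o(n^{-2})$ at generation $T$. Since $X_{T,1} + X_{T,2}$ is a non-negative integer, Markov's inequality gives $\mathbb{P}(X_{T,1} + X_{T,2} \geq 1) = o(n^{-2})$; because the process is absorbing at $(0,0)$, this establishes extinction by time $T$ with the desired probability.

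The second, more technical, step is to show by induction on $t < T$ that, for a suitable constant $C$ (independent of $n$), with probability at least $1 - o(n^{-3}/T)$,
$$X_{t,1} \leq C\log n \qquad \text{and} \qquad X_{t,2} \leq C\log n \cdot \mathbb{E}[\xi_{12}].$$
Conditionally on $X_{t-1}$, both $X_{t,1}$ and $X_{t,2}$ are sums of independent Bernoulli variables, so Chernoff's inequality (Lemma~\ref{chernoff}) yields $X_{t,i} \leq \max\{2\mathbb{E}[X_{t,i}\mid X_{t-1}], C'\log n\}$ with the required probability. Plugging the inductive bounds for $X_{t-1}$ into the conditional means,
\begin{align*}
\mathbb{E}[X_{t,1}\mid X_{t-1}] &\leq X_{t-1,1}M_{11} + X_{t-1,2}M_{21} \leq C\log n\cdot\tfrac{2}{\omega} + C\log n\cdot M_{12}M_{21} = o(\log n), \\
\mathbb{E}[X_{t,2}\mid X_{t-1}] &\leq X_{t-1,1}M_{12} + X_{t-1,2}M_{22} \leq C\log n\cdot\mathbb{E}[\xi_{12}] + o(\log n\cdot\mathbb{E}[\xi_{12}]).
\end{align*}
The key cancellation is $M_{12}M_{21} = \lambda^2 = o(1)$: it prevents the polynomially large entry $M_{12}$ from amplifying the inductive bound on $X_{t-1,2}$ when estimating $X_{t,1}$, while $M_{22}=o(1)$ keeps the second line's second term subdominant. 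Chernoff then closes the induction for $C$ chosen large enough.

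Combining both steps via a union bound over $t < T$ gives, with probability $1 - o(n^{-2})$,
$$\sum_{t \geq 0}(X_{t,1} + X_{t,2}) = \sum_{t=0}^{T-1}(X_{t,1} + X_{t,2}) \leq T \cdot O(\log n \cdot \mathbb{E}[\xi_{12}]) = O\bigl((\log n)^4\,\mathbb{E}[\xi_{12}]\bigr).$$
The main obstacle is the inductive Step 2: since the mean matrix has one polynomially large entry $M_{12}$, a careless Chernoff iteration would accumulate polynomial factors per generation and blow up. The essential feature that makes the induction close is precisely the cancellation $M_{12}M_{21}=o(1)$, reflecting the fact that the burst of type-2 descendants produced by a type-1 vertex does not amplify itself into the next type-1 generation.
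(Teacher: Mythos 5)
Your argument takes a genuinely different route from the paper's, which reduces the two-type process to two one-type Galton--Watson trees ($(Y_t)$ for the type-2 burst hanging off a single type-1 vertex, and $(Z_t)$ tracking type-1 vertices only after compressing the type-2 intermediaries into the offspring variable $\eta$), each controlled by Lemma~\ref{lem:BP}. Your Step~1 --- an extinction-time bound $T=O(\log n)$ via the mean matrix and Markov's inequality --- is essentially correct: the decay $\mathbb{E}[X_{t,1}+X_{t,2}]=O(\mathbb{E}[\xi_{12}]\lambda^{t-1})$ does hold for both starting types, and at $t=T$ Markov gives extinction with probability $1-o(n^{-2})$. (As a minor point, the spectral-radius bound should read $\lambda\le\max\{M_{11},M_{22}\}+\sqrt{M_{12}M_{21}}$ rather than the $\max$ of all three quantities, but both yield $\lambda=o(1)$.)

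Your Step~2, however, has a genuine gap: the induction does not close with a single constant $C$. The inductive hypothesis permits $X_{t-1,1}=C\log n$, and then $\mathbb{E}[X_{t,2}\mid X_{t-1}]\ge X_{t-1,1}M_{12}=C\log n\,\mathbb{E}[\xi_{12}]$ already equals your target, with the nonnegative contribution $X_{t-1,2}M_{22}$ pushing it strictly above. No concentration inequality can force $X_{t,2}$ below its conditional mean with high probability, and your $\max\{2\mathbb{E}[\cdot],C'\log n\}$ bound only certifies $X_{t,2}\le(2+o(1))C\log n\,\mathbb{E}[\xi_{12}]$; the constant effectively doubles each generation and the iterated bound blows up like $2^{T}$. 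Enlarging $C$ cannot repair this --- the failure is structural, not numerical. The fix is to decouple the two constants: prove $X_{t,1}\le C_1\log n$ and $X_{t,2}\le C_2\log n\,\mathbb{E}[\xi_{12}]$ with, say, $C_2=10C_1$. Then $\mathbb{E}[X_{t,2}\mid X_{t-1}]\le C_1\log n\,\mathbb{E}[\xi_{12}](1+10M_{22})\le 2C_1\log n\,\mathbb{E}[\xi_{12}]<C_2\log n\,\mathbb{E}[\xi_{12}]$ for large $n$, leaving slack for Chernoff, while the type-1 step still closes because $C_1\log n\,M_{11}+C_2\log n\,M_{12}M_{21}=o(C_1\log n)$. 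Your key observation $M_{12}M_{21}=o(1)$ is exactly the right ingredient; what is missing is the asymmetry between the two inductive constants so that the type-1 bound's feedback through $M_{12}$ does not saturate the type-2 target.
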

\begin{proof}
We will show the claim for $X_{0,1}=1$ and $X_{0,2}=0$. We may assume that this is the case since we can always artificially add a parent of type $1$ to the root.

We will bound the total progeny by the product of the total progenies of two one-type branching processes (we will make this precise below). Consider first the one-type branching process $(Y_t)_{t\geq 0}$ where the root has offspring distribution $\xi_{12}$ and all other elements have offspring distribution $\xi_{22}$. Define the events
\begin{equation*}
\cA= \{Y_1 \le 2 \mathbb{E}[\xi_{12}]\} \text{ and } \cB = \Big\{\sum_{t\geq 1} Y_t\le 2(\log n)^2  \mathbb{E}[\xi_{12}]\Big\}.
\end{equation*}
By Chernoff's inequality (Lemma~\ref{chernoff}),
\begin{equation*}
\mathbb P(\overline{\cA}) = \mathbb P(\mathrm{Bin}(d_i,p) > 2 d_i p) = \exp(-\Omega(d_i p)) = o(n^{-3}). 
\end{equation*}
Writing $Y_{i,t}$ for the number of descendants of the $i$-th child of the root at level $t\ge 1$ (if the root has less than $i$ children, set $Y_{i,t} = 0$), by Lemma~\ref{lem:BP}, 
\begin{equation*}
\mathbb P(\overline{\cB}\mid \cA)\le \mathbb P\Big(\sum_{i=1}^{2\mathbb{E}[\xi_{12}]} \sum_{t\geq 1} Y_{i,t}> 2(\log n)^2 \mathbb{E}[\xi_{12}]\Big)\le 2\mathbb{E}[\xi_{12}]\mathbb P\Big(\sum_{t\geq 1} Y_{1,t}> (\log n)^2\Big) = o(n^{-3}),
\end{equation*}
\noindent
and hence $\mathbb P(\cB) \ge 1 - \mathbb P(\overline{\cB}\mid \cA) - \mathbb P(\overline{\cA}) = 1 - o(n^{-3})$.

Consider now the one-type branching process $(Z_t)_{t\geq 0}$ having offspring distribution
$$
\eta = \eta_0 + \sum_{i=1}^{2(\log n)^2 \mathbb{E}[\xi_{12}]} \eta_i,
$$
where $\eta_0$ is distributed as $\xi_{11}$ and $(\eta_i)_{i\geq 1}$ is a sequence of iid copies of $\xi_{21}$. Intuitively we may think of $(Z_t)_{t\ge 0}$ as the original branching process $(X_t)_{t\ge 0}$ where all the vertices of type $2$ have been suppressed, connecting each type $1$ vertex to its closest ancestor of type $1$. We have
$$
\mathbb{E}[\eta]\leq \frac{2}{\omega}+ 2(\log n)^2 \mathbb{E}[\xi_{12}] \mathbb{E}[\xi_{21}]\leq  \frac{3}{\omega},
$$
Denote by $T_X, T_Y,T_Z$ the total progenies of the processes $(X_t)_{t\ge 0}, (Y_t)_{t\ge 0}, (Z_t)_{t\ge 0}$, respectively. 
First, by Lemma~\ref{lem:BP}, $\mathbb{P}(T_Z\leq (\log n)^2) = 1 - o(n^{-2})$.
Now, observe that
$$
\mathbb{P}(T_X\geq (\log n)^4 \mathbb{E}[\xi_{12}]) \leq \mathbb{P}(T_Z \ge (\log n)^2)+\mathbb{P}(\sup_{1\leq i\leq n}T_{Y^{(i)}}\geq (\log n)^2 \mathbb{E}[\xi_{12}]), 
$$
where $((Y^{(i)}_t)_{t\geq 0})_{i\geq 1}$ is a sequence of iid copies of $(Y_t)_{t\geq 0}$: indeed, if none of the two events on the right hand side happens, then the total progeny of type 1 is less than $(\log n)^2$, and each such vertex has at most $(\log n)^2 \mathbb{E}[\xi_{12}]$ descendants of type 2, giving a total of less than $(\log n)^4 \mathbb{E}[\xi_{12}]$. Both events on the right hand side happen with probability $o(n^{-2})$, which finishes the proof.
\end{proof}

Suppose we start exploring from $S_0= S^{i-1}\cup\{v\}$. The number of edges between $S_0$ and $V\setminus S_0$ is at most
$$
d(v)+\sum_{j=0}^{i-1} d_j n^{\alpha_j} \sim n^{\alpha_{i-1}+\beta_{i-1}}
$$
and, by Chernoff's inequality, with probability $1-o(n^{-2})$, at most $O\left(p n^{\alpha_{i-1}+\beta_{i-1}}\right)=O\left(n^{1-\beta_i- \frac{3}{2\cdot 5^{i+1}}}\right)$ of them percolate. Now, for each edge that percolates, we start an exploration process to bound the number of vertices in the component containing that edge. Since all vertices in $S^{i-1}$ have already been explored, each individual exploration process is stochastically dominated by $(X_{t})_{t\ge 0}$, and
by Claim~\ref{claim X}, it has size $O\left((\log n)^4 n^{\frac{3}{8\cdot 5^{i+1}}}\right)$ with probability at least $1-o(n^{-2})$. By a union bound over all choices of $v$ and over all edges between $S_0$ and $V\setminus S_0$, we obtain that a.a.s.\ any component has order 
$$|S_0| + O\left( n^{1-\beta_i- \frac{3}{2\cdot 5^{i+1}}}\cdot(\log n)^4n^{\frac{3}{8\cdot 5^{i+1}}} \right) = O\left((\log n)^4 n^{1-\beta_i- \frac{9}{8\cdot 5^{i+1}}}\right),$$
which concludes the proof since $\delta_i<\frac{9}{8\cdot 5^{i+1}}$.
\end{proof}

\paragraph{Acknowledgements.}The authors are grateful to the two anonymous referees for many useful comments and suggestions.

\bibliographystyle{plain}
\bibliography{References}

\begin{thebibliography}{10}

\bibitem{athreya1972}
K.~B. Athreya and P.~E. Ney.
\newblock {\em Branching Processes}.
\newblock {Springer-Verlag}, {Berlin Heidelberg}, 1972.

\bibitem{bollobas2015old}
B.~Bollob{\'a}s and O.~Riordan.
\newblock An old approach to the giant component problem.
\newblock {\em Journal of Combinatorial Theory Series B}, 113:236--260, 2015.

\bibitem{dhara2021critical}
S.~Dhara, R.~van~der Hofstad, and J.~S.~H. van Leeuwaarden.
\newblock Critical percolation on scale-free random graphs: new universality
  class for the configuration model.
\newblock {\em Communications in Mathematical Physics}, 382(1):123--171, 2021.

\bibitem{fountoulakis2007percolation}
N.~Fountoulakis.
\newblock Percolation on sparse random graphs with given degree sequence.
\newblock {\em Internet Mathematics}, 4(4):329--356, 2007.

\bibitem{fountoulakis2022percolation}
N.~Fountoulakis, F.~Joos, and G.~Perarnau.
\newblock Percolation on random graphs with a fixed degree sequence.
\newblock {\em SIAM Journal on Discrete Mathematics}, 36:1--46, 2022.

\bibitem{FK}
A.~Frieze and M.~Karo{\'n}ski.
\newblock {\em Introduction to random graphs}.
\newblock Cambridge {U}niversity {P}ress, 2016.

\bibitem{goerdt2001giant}
A.~Goerdt.
\newblock The giant component threshold for random regular graphs with edge
  faults.
\newblock {\em Theoretical Computer Science}, 259:307--321, 2001.

\bibitem{harris1960lower}
T.~E. Harris.
\newblock A lower bound for the critical probability in a certain percolation
  process.
\newblock In {\em Mathematical Proceedings of the Cambridge Philosophical
  Society}, volume~56, pages 13--20. Cambridge University Press, 1960.

\bibitem{hatami2012scaling}
H.~Hatami and M.~Molloy.
\newblock The scaling window for a random graph with a given degree sequence.
\newblock {\em Random Structures and Algorithms}, 41:99--123, 2012.

\bibitem{vdH17}
R.~{\VAN{Hofstad}{Van der}{van der}}~Hofstad.
\newblock {\em Random graphs and complex networks}, volume~43.
\newblock Cambridge {U}niversity {P}ress, 2017.

\bibitem{janson2008percolation}
S.~Janson.
\newblock On percolation in random graphs with given vertex degrees.
\newblock {\em Electronic Journal of Probability}, 14:87--118, 2009.

\bibitem{janson2009new}
S.~Janson and M.~J. Luczak.
\newblock A new approach to the giant component problem.
\newblock {\em Random Structures and Algorithms}, 34:197--216, 2009.

\bibitem{JLR}
S.~Janson, T.~\L{}uczak, and A.~Ruci\'{n}ski.
\newblock {\em Random Graphs}.
\newblock Wiley, 2000.

\bibitem{joos2018critical}
F.~Joos and G.~Perarnau.
\newblock Critical percolation on random regular graphs.
\newblock {\em Proceedings of the American Mathematical Society},
  146:3321--3332, 2018.

\bibitem{joos2016how}
F.~Joos, G.~Perarnau, D.~Rautenbach, and B.~Reed.
\newblock How to determine if a random graph with a fixed degree sequence has a
  giant component.
\newblock {\em Probability Theory and Related Fields}, 170:263--310, 2018.

\bibitem{joseph2010component}
A.~Joseph.
\newblock The component sizes of a critical random graph with pre-described
  degree sequence.
\newblock {\em The Annals of Applied Probability}, 24:2560--2594, 2014.

\bibitem{McK81}
B.~D. McKay.
\newblock Subgraphs of random graphs with specified degrees.
\newblock {\em Congressus Numerantium}, 33:213--223, 1981.

\bibitem{molloy1995critical}
M.~Molloy and B.~Reed.
\newblock A critical point for random graphs with a given degree sequence.
\newblock {\em Random Structures and Algorithms}, 6:161--180, 1995.

\bibitem{molloy1998size}
M.~Molloy and B.~Reed.
\newblock The size of the giant component of a random graph with a given degree
  sequence.
\newblock {\em Combinatorics, Probability and Computing}, 7:295--305, 1998.

\bibitem{molloy2002graph}
M.~Molloy and B.~Reed.
\newblock {\em Graph colouring and the probabilistic method}, volume~23.
\newblock Springer Science \& Business Media, 2002.

\bibitem{nachmias2010critical}
A.~Nachmias and Y.~Peres.
\newblock Critical percolation on random regular graphs.
\newblock {\em Random Structures and Algorithms}, 36:111--148, 2010.

\bibitem{Pet91}
J.~Petersen.
\newblock Die {T}heorie der regul\"aren {G}raphs.
\newblock {\em Acta Mathematica}, 15:193--220, 1891.

\bibitem{pittel2008edge}
B.~Pittel.
\newblock Edge percolation on a random regular graph of low degree.
\newblock {\em The Annals of Probability}, 36:1359--1389, 2008.

\bibitem{riordan2012phase}
O.~Riordan.
\newblock The phase transition in the configuration model.
\newblock {\em Combinatorics, Probability and Computing}, 21:265--299, 2012.

\bibitem{riordanWarnke}
O.~Riordan and L.~Warnke.
\newblock Achlioptas processes are not always self-averaging.
\newblock {\em Physical Review E}, 86(1):011129, 2012.

\bibitem{Wor99}
N.~C. Wormald.
\newblock Models of random regular graphs.
\newblock In {\em Surveys in combinatorics, 1999 ({C}anterbury)}, volume 267 of
  {\em London Math. Soc. Lecture Note Ser.}, pages 239--298. Cambridge Univ.
  Press, Cambridge, 1999.

\end{thebibliography}

\end{document}